\documentclass[12pt]{amsart}
\usepackage{amsmath,amscd,amsthm,amsfonts, amssymb,amsxtra, accents, ulem}
\usepackage[all,cmtip]{xy}
\usepackage{xparse,etoolbox}
\usepackage{tikz}
\usetikzlibrary{patterns,calc,decorations.markings}
\usepackage{color}
\usepackage{hyperref}
  \hypersetup{colorlinks=true,citecolor=blue}
  \usepackage{graphicx, epstopdf}

 \usepackage[font=scriptsize]{caption} 
\CDat
\newtheorem{thm}{Theorem}[section]  
\newtheorem*{un-no-thm}{Theorem}
\newtheorem{cor}[thm]{Corollary}     
\newtheorem{lem}[thm]{Lemma}         
\newtheorem{prop}[thm]{Proposition}  
\newtheorem{add}[thm]{Addendum}

\newtheorem{bigthm}{Theorem}

\newtheorem{bigadd}[bigthm]{Addendum}

\theoremstyle{definition}
\newtheorem{defn}[thm]{Definition}   

\theoremstyle{definition}
\newtheorem*{prob}{Problem}   

\theoremstyle{definition}
\theoremstyle{remark}
\newtheorem{rem}[thm]{Remark}
\newtheorem{rems}[thm]{Remarks}

\theoremstyle{definition}
\newtheorem{ques}[thm]{Question}

\theoremstyle{remark}

\newtheorem{notation}[thm]{Notation}
\newtheorem*{acks}{Acknowledgements}

\newtheorem*{out}{Outline}

\newtheorem*{intro-rem}{Remark}
\newtheorem*{intro-rems}{Remarks}

\theoremstyle{remark}
\newtheorem{ex}[thm]{Example}

\newcommand{\blocktheorem}[1]{%
  \csletcs{old#1}{#1}
  \csletcs{endold#1}{end#1}
  \RenewDocumentEnvironment{#1}{o}
    {\par\addvspace{1.5ex}
     \noindent\begin{minipage}{\textwidth}
     \IfNoValueTF{##1}
       {\csuse{old#1}}
       {\csuse{old#1}[##1]}}
    {\csuse{endold#1}
     \end{minipage}
     \par\addvspace{1.5ex}}
}

\raggedbottom

\blocktheorem{bigthm}



\newcommand{\Mdef}[2]{\newcommand{#1}{\relax\ifmmode #2 \else $#2$\fi}}

\def\:{\colon\!}
\Mdef{\smsh}{\wedge}
\def\Bbb{\mathbb}

\def\cal{\mathcal}

\def\an{\text{\rm an}}




\newcommand{\End}{\mathrm{end}}
\newcommand{\Ch}{\cal C}
\newcommand{\nt}{\mathrm{nt}}
\DeclareMathOperator{\mycolim}{colim}
\newcommand{\colim}{\mathop{\mycolim}}
\DeclareMathOperator{\myhocolim}{hocolim}
\newcommand{\hocolim}{\mathop{\myhocolim}}

\DeclareMathOperator{\Top}{Top}

\newcommand\restr[2]{{ 
  \left.\kern-\nulldelimiterspace 
  #1 
  \vphantom{\big|} 
  \right|_{#2} 
}}


\title{Hypercurrents}
\author[M.~J. Catanzaro]{Michael J.\ Catanzaro}
\address{Dept.~ of Mathematics\\ 
                Iowa State University \\
               Ames, IA 50011}
\email{mjcatanz@iastate.edu}

\author[V.~Y. Chernyak]{Vladimir Y.\ Chernyak}
\address{Dept.~  of Chemistry\\ 
                Wayne State University\\ 
                Detroit, MI 48202}
\email{chernyak@chem.wayne.edu}

\author[J.~R. Klein]{John R.\ Klein}
\address{Dept.~  of Mathematics\\
                Wayne State University\\
                Detroit, MI 48202}
\email{klein@math.wayne.edu}

\subjclass[2010]{Primary: 60J27, 55U15; Secondary: 18G35, 82B41}
\keywords{protocol, hypercurrent, tree functor, CW complex}
\date{\today}
\begin{document}

\begin{abstract} We introduce the notion of a {\it protocol}, which 
consists of a space whose points are labeled by real numbers indexed 
by the set of cells of a fixed CW complex in prescribed degrees,  where the labels are required to vary continuously. 
If the space is a one-dimensional manifold, then a protocol determines a continuous time Markov chain.

When a homological gap condition is present, we associate to each protocol a `characteristic' cohomology class which we call the {\it hypercurrent.} The hypercurrent comes in two flavors: one algebraic topological and the other analytical. For generic protocols we show that the analytical hypercurrent tends to the topological hypercurrent in the low temperature limit. We also exhibit examples of protocols having nontrivial hypercurrent.
\end{abstract}
\maketitle
\setcounter{tocdepth}{1}
\tableofcontents
\addcontentsline{file}{sec_unit}{entry}


\section{Introduction}\label{sec:intro}
\subsection{Background} This paper is a result of our ongoing investigation of the stochastic motion of cellular cycles in a finite CW complex \cite{CKS13}, \cite{CCK15b}, \cite{CCK16}.
In \cite{CKS13}, we introduced the notion of a {\it driving protocol}, which is a one-parameter
 family of potentials labeling the vertices and edges of a connected finite topological graph $X$.  We explained how a
 driving
protocol determines a continuous time Markov chain whose state diagram is the double of $X$. 
Associated with such a Markov chain, we defined an {\it average current} which is a 1-dimensional chain with real coefficients that measures of the net flow of probability across the edges of $X$. When the protocol is periodic
of period $\tau$, we showed in the adiabatic slow driving limit $\tau\to \infty$ 
that the resulting current tends to a 1-dimensional real homology class. After 
taking a second limit, the `low temperature limit,' we showed that,  for generic parameters, 
the average current quantizes to an integer homology class.
Furthermore, we showed that the integer homology class admits an algebraic topological description.

In \cite{CCK16} we allowed $X$ to be  an arbitrary finite CW complex of dimension $d$.
In this case a driving protocol consists of a one-parameter family of potentials labeling the cells of $X$ in the contiguous dimensions $d-1$ and $d$. When the driving protocol is periodic, the average current in the adiabatic limit 
was defined as a real $d$-dimensional homology class. In this generality, we
extended the principal results of \cite{CKS13}. In particular, we showed that for generic parameters the average current in the low temperature limit fractionally quantizes to a homology class with coefficients in $\Bbb Z[\frac{1}{\delta}]$, for some positive integer $\delta$ that is a combinatorial invariant of $X$.

In the current paper we consider a further generalization: we introduce driving protocols in higher dimensions. 
We are tempted to conjecture that an $n$-dimensional protocol determines an
`$n$-dimensional Markov process' with time substituted by an $n$-manifold.
However, we are not in a position to state a conjecture since we do not know what an
`$n$-dimensional Markov process' should be.
Nevertheless, in the presence of a homological gap condition, we are able to 
associate a topological invariant to a higher dimensional protocol, called the {\it hypercurrent,} which is 
a kind of secondary current.   

We will exhibit several different descriptions of the hypercurrent,
some algebraic topological as well as an analytical one defined using differential forms. We axiomatically characterize
the hypercurrent and exhibit some examples. Lastly, we prove a quantization result relating the analytical
definition to the topological one; this a kind of {\it index theorem.}

\subsection{Motivation} We recall from \cite{CCK16} how continuous 
time Markov chains relate to driving
protocols.

\subsubsection{Markov chains}
A {\it continuous time Markov chain} consists of:
\begin{itemize}
\item a locally finite directed  graph
 \[
 \Gamma = (\Gamma_0,\Gamma_1)\, ,
 \]
called a {\it state diagram};
\item an assignment of 
a continuous function 
  \[ k_{\alpha}\: \Bbb R \to [0,\infty)\, ,
 \]
 to each edge $\alpha\in \Gamma_1$. 
\end{itemize}
The function $k_{\alpha}$  is called the {\it transition rate} of the edge $\alpha$. It is to be interpreted as
  the instantaneous rate of change of probability in jumping 
  from the source state of $\alpha$ to its target state.
  
 Let $s = (s_0,s_1)\: \Gamma_1\to \Gamma_0\times \Gamma_0$ be the function which assigns to $\alpha$ the ordered pair consisting of its
source and target.
Then the rates define a time dependent square matrix $\Bbb H=\Bbb H(t)$, as follows.
For $i\ne j$, set
\[
h_{ij} = \sum_{s(\alpha) = (i,j)} k_\alpha\, .
\]
Then the matrix entries of $\Bbb H$ are given by 
 \[
 \Bbb H_{ij} = \begin{cases} 
h_{ij}\, ,\qquad & i\ne j \, ;\\
  -\sum_{\ell\ne j} h_{\ell j} \, ,  \quad  \text{ if }  & i=j\, ,
 \end{cases}
 \] 
 where the indices range over $i,j\in \Gamma_0$.
 The  matrix $\Bbb H$ is called the {\it master operator}.  
 Associated with $\Bbb H$ is a linear, first order 
 ordinary differential equation
 \begin{equation} \label{eqn:kolmogorov}
 p'(t) = \Bbb Hp(t),\qquad 
\end{equation}
in which $p(t)$ is a one parameter family of (probability) distributions on the set of vertices $\Gamma_0$.  
Equation \eqref{eqn:kolmogorov} is called the {\it Kolmogorov equation} or the {\it master equation}. 
Its solutions describe the flow of probability.

Note that when the transition rates are constant with value 1, the matrix $\Bbb H$ is with the graph Laplacian  
and  \eqref{eqn:kolmogorov} is the heat (diffusion) equation.

\subsubsection{Driving protocols}
A convenient way to obtain a Markov chain is to fix  a finite, connected, simple undirected graph  
\[X = (X_0,X_1)\, .\]  
 Consider the real vector space
 $\cal M_{X}$ consisting of pairs $(E,W)$ in which $E\: X_0 \to \Bbb R$
 and $W\: X_1 \to \Bbb R$ are functions. Thus $E$ and $W$ equip the vertices and edges of the graph with real number {\it weights}.  For a vertex $i$, we let
 $E_i$ denote the value of $E$ at a vertex $i$, and 
 we let $W_{\alpha}$ denote the value of $W$ at an edge $\alpha$. Note that
 
As in \cite{CKS13}, a {\it driving protocol} is a continuous map 
 \[
 \gamma\: \Bbb R \to \cal M_{X}\, ,
 \] 
i.e.,  a one-parameter family of weights $(E(t),W(t))$.

 A driving protocol $\gamma$ determines a continuous time Markov chain whose  
 state diagram
 is the ``double'' of $X$.  The latter is
  the directed graph $\Gamma = (\Gamma_0,\Gamma_1)$ 
 in which $\Gamma_0 = X_0$ and
$\Gamma_1$ consists of the ordered pairs 
 $(i,\alpha)\in X_0 \times X_1$  such that $i$ is a vertex of $\alpha$.
 The transition rate at  $(i,\alpha)$  is defined by
\begin{equation} \label{eqn:Arrhenius}
k_{i\alpha}(t) :=  e^{E_i(t) -W_{\alpha}(t)}\, .
\end{equation}

\subsection{Generalized protocols} We now turn to the main object of this study.
Let $X$ be a finite CW complex and let $X_k$ denote its set of $k$-cells.
For natural numbers $p \le q$, let $\cal M_{p,q}(X)$ be the vector space
of functions ({\it weights})
\[
W_\bullet \: \coprod_{k=p}^q X_k \to \Bbb R\, .
\]
We let $W_j\: X_j \to \Bbb R$ denote the restriction of $W_\bullet$ to $X_j$.

\begin{defn} Let $\Sigma$ be a topological space.
A {\it $\Sigma$-protocol} is a  continuous map
\[
\gamma\:\Sigma \to \cal M_{p,q}(X)\, .
\] 
\end{defn}

\begin{rem} Hereafter, we refer to these as {\it protocols}. When $X$ is understood,
we abbreviate notation $\cal M_{p,q} = \cal M_{p,q}(X)$.  
There is an evident projection
\[
\cal M_{p,q} \to \cal M_{r,s}
\]
whenever $p \le r \le s \le q$.  Given a protocol $\gamma$ and a map of spaces $f\: \Sigma' \to\Sigma$,
 the composition $\gamma' := \gamma\circ f$ defines a protocol.
 
Given $\gamma$ and $b\in \Sigma$, we write $W_j(b)\: X_j \to \Bbb R$ for the
$j$-th component of $\gamma(b)$, i.e., 
\[
\gamma(b) = W_\bullet(b) := (W_p(b),W_{p+1}(b),\dots, W_q(b))\, .
\]
\end{rem}

\subsection{Homological hypercurrents}
As $\cal M_{p,q}$ is contractible, there is at this point no algebraic topology
that can be extracted from a protocol $\Sigma\to \cal M_{p,q}$. Algebraic
topology enters the picture by imposing restrictions on the kinds of families that are allowed.\footnote{This is akin to what one does with the space of smooth
functions $C^\infty(M,\Bbb R)$ on a compact smooth manifold $M$. The space  $C^\infty(M,\Bbb R)$
is contractible. Algebraic topology materializes
by restricting to a generic subspace of functions such as the space of
Morse functions.}
The first type of restriction that we will consider are  ``good'' protocols:

\begin{defn}[Good Protocols] \label{defn:good-family} A 
  protocol $\Sigma\to \cal M_{p,q}$
is {\it good} if for each $b\in \Sigma$, there exists an integer
 $j = j(b)$ satisfying $p\le j \le q$  such that the weight 
$W_j(b) \: X_j \to \Bbb R$ is one-to-one.\footnote{In \S\ref{sec:robust}, we relax the condition by considering so-called {\it robust} protocols.}

\end{defn}

Let \[
\cal {\breve M}_{p,q} \subset \cal M_{p,q}
\] be the (open, dense) subspace of $\cal M_{p,q}$ consisting of those weights $W_\bullet$
such that $W_j\: X_j \to \Bbb R$ is one-to-one for some $p \le j \le q$.
Then a good protocol is nothing more than a map 
$\Sigma \to \cal {\breve M}_{p,q}$. 
  
Collectively, the good protocols are objects of the
category  
\[
\Top_{/\breve {\cal M}_{p,q}}\, ,
\] i.e., the category of spaces over $\breve {\cal M}_{p,q}$. When the structure map is understood we 
drop it from the notation: an object
$\Sigma \to \breve {\cal M}_{p,q}$ is
then referred to as $\Sigma$.

 \begin{defn}[Gap Condition]
A {\it gap} in $X$ 
is a pair of non-negative 
 integers $p,q$ such that
 $p\le q$ and 
 the Betti numbers $\beta_j(X)$ are trivial for $p < j < q$. Denote the
 gap by the closed interval $[p,q]$.  
 \end{defn}
 
 \begin{rem} For intervals of the form $[p,p+1]$, the gap condition is automatically satisfied.
 \end{rem}

 Let $\Bbb F$ be a field of characteristic zero. In what follows, chain complexes and their homology
will always be taken with coefficients in $\Bbb F$, even though
almost always the coefficients are suppressed from the notation.

Given a gap $[p,q]$ and a good protocol $\gamma:\Sigma \to \breve{\cal M}_{p,q}$, 
we will construct a linear transformation of vector spaces
 \begin{equation}\label{eqn:J}
\cal J_{p,q}\: H_{q-p}(\Sigma) \otimes H_{p}(X) \to H_{q}(X)
\end{equation}
called the {\it hypercurrent homomorphism} (we reiterate that
homology is taken with coefficients in  $\Bbb F$). The following result enumerates 
some of its basic properties.

\begin{bigthm} \label{bigthm:properties} The hypercurrent homomorphism \eqref{eqn:J}  
exhibits the following properties:
\begin{enumerate}
\item (Functoriality). $\cal J_{p,q}$ is contravariantly functorial in $\Sigma$;
\item (Homotopy Invariance). $\cal J_{p,q}$ is homotopy invariant: if there is
 homotopy from $\gamma$ to $\gamma'$ in the space of good protocols, then
the associated hypercurrent homomorphisms coincide.
\item (Initial Condition). When $q=p$, the homomorphism 
\[
\cal J_{p,p}\: H_{0}(\Sigma) \otimes H_{p}(X) \to H_{p}(X)
\]
is the identity. 
\item (Non-triviality). When $q >p$,  there exists a finite connected CW complex
$X$ with gap $[p,q]$ and a good protocol $\Sigma \to \breve{\cal M}_{p,q}(X)$ such that $\cal J_{p,q}$ is nontrivial.
\end{enumerate} 
\end{bigthm}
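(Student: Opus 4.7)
The plan is to construct $\cal J_{p,q}$ so that properties (1)--(3) come out essentially as tautologies of the definition, reserving the substantive effort for (4).

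For the construction, I would realize $\cal J_{p,q}$ as the homological shadow of a parametrized chain-level object defined over all of $\breve{\cal M}_{p,q}$. At each $b$, injectivity of some $W_{j(b)}(b)$ supplies a Morse-type combinatorial datum on $X_{j(b)}$ (a total ordering, hence a distinguished extremal cell), while the gap condition forces the truncation $C_{[p,q]}(X;\Bbb F)$ to have homology concentrated only at the two endpoints $p$ and $q$. Splitting the truncation using the Morse datum yields a natural ``universal'' class
\[
\eta \in H^{q-p}\bigl(\breve{\cal M}_{p,q};\; \mathrm{Hom}(H_p(X), H_q(X))\bigr),
\]
and one sets $\cal J_{p,q}(\sigma \otimes x) := \bigl\langle \gamma^{*}\eta,\; \sigma\bigr\rangle(x)$ for $\sigma \in H_{q-p}(\Sigma)$ and $x \in H_p(X)$.

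With this definition in hand, property (1) is just naturality of pullback in cohomology; (2) is the fact that homotopic maps induce the same pullback on $H^{*}$, applied to $\gamma \simeq \gamma'$; and (3) reduces to checking that the universal class $\eta$ specializes to the identity endomorphism of $H_p(X)$ in degree $0$ when $q=p$, which is immediate because no splitting across a gap is needed in that degenerate case.

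Property (4) is where the real work lies and is the main obstacle I foresee. The simplest case $q-p=1$ should essentially recover the non-triviality of the currents studied in \cite{CKS13,CCK16}: take $\Sigma = S^{1}$ and $X$ a small CW complex with gap $[p,p+1]$, choose a cyclic one-parameter family of weights that induces non-trivial monodromy in the choice of extremal cell, and verify by direct cellular computation that $\cal J_{p,p+1}$ is non-zero on the fundamental class of $S^{1}$ tensored with a suitable generator of $H_p$. For $q-p \geq 2$, I would bootstrap to $\Sigma = S^{q-p}$ either by iteratively suspending such an example or by forming a smash-product protocol from $q-p$ independent circular protocols, then invoke a Künneth-type compatibility of the hypercurrent (itself a consequence of the construction) to reduce non-triviality to the one-dimensional case. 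The technical difficulty I expect is finding a CW model small enough to make the cellular computation transparent while rich enough to support a non-trivial hypercurrent in arbitrarily high gap width.
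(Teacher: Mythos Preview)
Your framing of (1)--(3) as tautologies of a universal-class definition is consistent with the paper's viewpoint, and the paper likewise dispatches them in one line as following ``directly from the construction of the pre-hypercurrent map.'' However, your construction sketch is underspecified in a way that matters: a total ordering on $X_j$ yields, via the greedy algorithm, an entire spanning $j$-tree (or spanning co-tree when $j=p$), not merely a distinguished extremal cell, and the paper's tree functor $\tau$ records this full subcomplex. The chain-level map is then produced by a model-category lifting argument against the acyclic fibration $\tau\to H_\ast(\tau)$, not by a one-step ``splitting of the truncation.'' Your class $\eta$ exists, but you have not said how to build it.

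For (4), your bootstrapping strategy diverges from the paper and has a genuine gap: no K\"unneth-type multiplicativity for $\cal J$ is ever established, and proving one would be an independent theorem---so your inductive reduction to $q-p=1$ is currently unsupported. The paper instead computes directly in every gap width. For $p=0$, take $X=S^q$ with the hemispherical CW structure (two cells $e^j_\pm$ in each dimension $j\le q$) and the explicit protocol $f\colon S^q\to\breve{\cal M}_{0,q}$ given by $f(x_0,\dots,x_q)=(0,x_0,\dots,0,x_q)$; identifying $S^q$ with the boundary of a $(q{+}1)$-cube, one evaluates the cellular hypercurrent $\cal J^{\text{cw}}$ face-by-face and finds that only the two faces transverse to the top-weight axis contribute, summing to the generator $e^q_-\pm e^q_+$ of $H_q(S^q)$. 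For $p>0$ one passes to $Z=S^q/S^{(p-1)}\simeq S^p\vee S^q$, uses the evident homeomorphism $\breve{\cal M}_{p,q}(Z)\cong\breve{\cal M}_{0,q-p}(S^{q-p})$, and reduces to the computation already done with $\Sigma=S^{q-p}$. Your instinct that the higher examples assemble from one-dimensional pieces is not wrong---Proposition~\ref{prop:good-type} identifies $\breve{\cal M}_{p,q}$ with an iterated join $E(X_p,\Bbb R)\ast\cdots\ast E(X_q,\Bbb R)$, which is morally your smash-product picture---but the paper exploits this by an explicit cellular calculation rather than by an unproven compatibility lemma. The ``small enough yet rich enough'' CW model you were worried about is exactly the hemispherical sphere.
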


\begin{rems} (1). The hypercurrent homomorphism is also natural
in the field $\Bbb F$ by extension of scalars. In particular, it suffices
to consider the case when $\Bbb F = \Bbb Q$ is the rational numbers.
\smallskip

\noindent (2). When $\dim X=1$,
the homomorphism  
\[ 
\cal J_{0,1}\: 
H_{1}(\Sigma) \otimes H_{0}(X) \to H_{1}(X)
\]
coincides with the average current of \cite{CKS13} evaluated at
the generator of $H_0(X)$ given by any zero cell.
\smallskip

\noindent (3). When $\dim X = d > 1$, the homomorphism $\cal J_{d-1,d}$
coincides with the average current homomorphism of \cite{CCK16} evaluated
at the homology class of the higher Boltzmann cycle $[\rho^B]\in H_p(X)$
(cf. \cite[defn.~1.12]{CCK15b}).
\smallskip

 \noindent (4). Set $H^p_q(X) := H^p(X;H_q(X))$.
 Since $\cal J_{p,q}$ is functorial in $\Sigma$,
we  tend to view $\cal J_{p,q}$, or equivalently, its adjoint
\[
{\cal J}^\ast_{p,q} \in H^{q-p}(\Sigma; H^{p}_q(X)) \, ,
\]
 as a kind of {\it characteristic
class} for spaces $\Sigma$ equipped with a good protocol: the operation $\Sigma \mapsto H^{q-p}(\Sigma; H^{p}_q(X))$
defines a presheaf on the category of good protocols
and the hypercurrent homomorphism defines a global section of this presheaf.
\end{rems}

The following  hints that the hypercurrent homomorphism
measures in some way the difference between the non-triviality of the boundary operator
in the cellular chain complex of $X$ and the triviality of the homology in the gap degrees.

\begin{bigadd} Assume $X$ has a gap $[p,q]$.  If
there is a $j\in [p,q{-}1]$ such that 
\begin{itemize} 
\item the cellular boundary operator 
$\partial\: C_{j+1}(X) \to C_{j}(X)$ 
is trivial, or
\item there is a $j\in [p,q]$ such that $|X_j|\le 1$,
\end{itemize}
then the hypercurrent homomorphism $\cal J_{p,q}$ is trivial.
\end{bigadd}

\subsection{The hypercurrent chain map \label{subsec:hypercurrent-chain}}
Let $C_\ast(X)$ be the cellular chain complex over $\Bbb F$ of the CW complex $X$.
Then $C_j(X)$ is the vector space over $\Bbb F$ with basis $X_j$. 
Let $\bar C_\ast = \bar C_\ast(X)$ be the chain complex  given by
\[
\bar C_j := C_{j+p}(X^{(q)},X^{(p-1)})\, ,
\]
i.e., the cellular chain complex of the pair $(X^{(q)},X^{(p-1)})$ shifted
by $p$.
There is an evident surjection of vector spaces
\[
\hom(H_0(\bar C_\ast),H_{q-p}(\bar C_\ast)) \to  \hom(H_p(X),H_q(X))
\]
(cf.~Remark \ref{rem:proj-rest-surj} below).
The homological hypercurrent homomorphism will be induced by a
chain map 
\begin{equation} \label{eqn:hyper-map-exist}
\cal J\: I_\ast(\Sigma) \otimes \bar C_\ast \to \bar C_\ast \, ,
\end{equation}
where $I_\ast (\Sigma)$ is the chain complex over $\Bbb F$ freely generated by 
the set of ``small'' singular simplices $\sigma\:\Delta^k \to \Sigma$. Here ``small'' means that
there is a $j\in [p,q]$ such that 
$W_j(b) \:X_j \to \Bbb R$ is one-to-one for all  $b\in \sigma(\Delta^k)$. 
A subdivision argument shows that inclusion of $I_\ast (\Sigma)$
into the full total singular complex $S_\ast(\Sigma)$ is a quasi-isomorphism.

\begin{bigthm} \label{bigthm:hypercurrent-chain} The hypercurrent chain map \eqref{eqn:hyper-map-exist} exists and is well-defined up to contractible choice.
\end{bigthm}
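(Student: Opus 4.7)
The plan is to construct $\cal J$ by induction on the dimension of small singular simplices, using an acyclic-carrier argument whose acyclicity comes from the gap condition.

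As a base case, set $\cal J(b \otimes -)\: \bar C_\ast \to \bar C_\ast$ to be the identity for each small $0$-simplex $b$. This is the normalization consistent with the initial condition in Theorem~\ref{bigthm:properties}(3), and it pins down the homology-level behavior in degree zero. For the inductive step, to each small $k$-simplex $\sigma\:\Delta^k \to \Sigma$ the smallness condition supplies an index $j(\sigma) \in [p,q]$ on which $W_j$ is injective over $\sigma(\Delta^k)$, giving a distinguished total order on $X_{j(\sigma)}$. I would use this ordering to carve out a functorial sub-chain-complex $A_\sigma \subseteq \bar C_\ast$ satisfying $A_{\sigma'} \subseteq A_\sigma$ whenever $\sigma'$ is a face of $\sigma$, and such that $A_\sigma$ is acyclic in the degrees where obstructions live. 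The gap condition is the input guaranteeing that such an $A_\sigma$ exists: the vanishing of $H_j(X)$ for $p < j < q$ lets the appropriate ``small'' subcomplexes of $\bar C_\ast$ have the required homological vanishing, and the discriminating cell supplied by the injective $W_{j(\sigma)}$ is what singles out a canonical such subcomplex.

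Given such a carrier, the inductive step is standard. Assume $\cal J$ has been defined on simplices of dimension less than $k$. For a $k$-simplex $\sigma$ and a basis element $e \in \bar C_m$, the chain-map equation forces $\partial \cal J(\sigma \otimes e)$ to equal a specific cycle in $A_\sigma$ expressed in terms of $\cal J$ on faces; by acyclicity, this cycle bounds within $A_\sigma$, and one sets $\cal J(\sigma \otimes e)$ to be any such filler. Contractibility of the space of choices is verified by iterating the same argument one level up: two fillers differ by a cycle, which bounds, yielding a chain homotopy; two chain homotopies differ by a cycle, which bounds, yielding a $2$-homotopy; and so on. Formally, the simplicial mapping space of admissible chain maps is a contractible Kan complex, which is the precise meaning of ``well-defined up to contractible choice.''

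The main obstacle is producing the carrier $\sigma \mapsto A_\sigma$ coherently. The index $j(\sigma)$ may jump on passing to faces --- a face $\sigma'$ of $\sigma$ can have strictly more choices of injective weight than $\sigma$ itself --- so different choices of $j$ yield different candidate subcomplexes; the actual carrier must be compatible with all such choices simultaneously while retaining acyclicity in the required degrees. Balancing this combinatorial compatibility against the homological vanishing is the technical crux of the theorem; once the carrier is in place, the remainder of the argument is the standard method of acyclic models.
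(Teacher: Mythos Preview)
Your overall strategy is the same as the paper's: an acyclic-models argument, packaged there via the projective model structure on $\Ch^{I_\Sigma}$ (Proposition~\ref{prop:lift} and Corollary~\ref{cor:lift}). The paper's carrier is the \emph{tree functor} $\tau(\sigma)=\bar C(T_\sigma)$, where $T_\sigma$ is the spanning tree or spanning co-tree produced by the greedy algorithm applied to $W_{k_\sigma}$, with $k_\sigma$ the \emph{smallest} index for which $W_{k_\sigma}$ is injective on $\sigma(\Delta^j)$. Taking the smallest index is exactly what resolves your ``main obstacle'': if $\sigma'$ is a face of $\sigma$ then $k_{\sigma'}\le k_\sigma$, and one checks that $T_{\sigma'}\subseteq T_\sigma$ (either $k_{\sigma'}=k_\sigma$ and the orderings, hence the trees, agree; or $k_{\sigma'}<k_\sigma$ and $T_{\sigma'}\subset X^{(k_\sigma-1)}\subset T_\sigma$). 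Positive acyclicity of $\bar C(T_\sigma)$ is precisely the defining property of a $k_\sigma$-tree under the gap hypothesis. So the carrier you were looking for is already in the paper; you identified the right difficulty but stopped short of the construction that dissolves it.

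There is, however, a genuine error in your base case. Setting $\cal J(b\otimes -)=\id_{\bar C_\ast}$ for $0$-simplices $b$ is incompatible with any acyclic carrier: the identity does not factor through any proper subcomplex $A_b\subsetneq \bar C_\ast$, so if $A_b=\bar C_\ast$ then by your inclusion condition $A_\sigma=\bar C_\ast$ for every $\sigma$, and $\bar C_\ast$ is not acyclic in degree $q-p$, killing the inductive step. The correct initialization (as in the paper) is to choose, for each $0$-simplex $b$, a chain map $\bar C_\ast\to \bar C(T_b)$ that induces the identity on $H_0$ and zero in positive degrees; such a map exists because the inclusion $\bar C(T_b)\hookrightarrow \bar C_\ast$ is an isomorphism on $H_0$. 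This is already a nontrivial, non-identity map, and its existence is part of what the acyclic-models machinery provides.
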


Theorem \ref{bigthm:hypercurrent-chain} is proved using Quillen model
category machinery applied to a certain functor category.
With the intent of clarifying the way in which the current 
work relates to \cite{CKS13}, 
we discuss two cases: (i) dimension one and (ii) the general case.

\subsection{The graph case}  
Consider the case $[p,q] = [0,1]$, where $X = (X_0,X_1)$ is a graph. 
In this instance, the chain map we seek
arises from a map of spaces. 
We will sketch below a form of the construction using the methods of \cite{CKS13}.

If $X$ is a graph, then 
\[
\Sigma = \Sigma(0) \cup \Sigma(1)\, ,
\] 
where
 $W_j(b)\: X_j \to \Bbb R$ is one-to-one for $b\in \Sigma(j)$.
 If $b\in \Sigma(0)$, then $W_0(b)\: X_0 \to \Bbb R$ has a 
 unique minimum $L_b \in X_0$, which is a zero cell of $X$.
 If $b\in \Sigma(1)$, then the greedy algorithm applied to
$W_1(b)\: X_1 \to \Bbb R$ produces a minimal spanning tree $T_b$.
Thus to every point of $\Sigma$ the assignment 
\[
b\mapsto 
\begin{cases}
L_b \quad & \text{ if } b\in \Sigma(0); \\
T_b     &\text{ otherwise.}
\end{cases}
\]
codifies a quasifibration $N \to \Sigma$ with contractible fibers, where
$N$ is the space of pairs $(b,x)$ with $b\in \Sigma$ and where $x$ is either $L_b$
or a point of $T_b$.
It follows that the map $N \to \Sigma$ is a weak homotopy equivalence. 
Second factor projection defines a map $N \to X$. Consequently, modulo technical details, we have defined a weak map of spaces
\[
\Sigma @< {}_\sim << N @>>> X
\]
which induces a space level version of the desired chain map. 

\subsection{The general case} 
When $q-p> 1$, the  above approach
doesn't generalize. 
Let $I_{\Sigma}$ be the poset of small singular simplices partially ordered with respect to facial inclusion.  
The map \eqref{eqn:hyper-map-exist} is defined by a local construction in sense that
it is the induced map of  homotopy colimits
associated with functors appearing in a canonical chain of natural transformations of chain complex valued functors $I_{\Sigma} \to \Ch$. The chain of natural 
transformations has the form
\begin{equation} \label{eqn:natural-pre-hyper}
 \chi @>>> H_\ast(\tau) @<{}_\sim << \tau @>>> \chi\, ,
\end{equation}
in which 
\begin{itemize} 
\item the functor $\chi$ is the constant functor with value $\bar C_\ast$;
\item the functor $\tau$, called the {\it tree functor}, is
defined in terms of higher dimensional spanning tree and co-tree data in the CW complex $X$ (cf.~\S\ref{sec:spanning-tree} and Definition \ref{defn:tree-functor});
\item the functor $H_\ast(\tau)$ is given by taking the homology of $\tau$ objectwise, where the homology of a chain complex is considered as a chain complex with trivial boundary operator.
\item The natural transformation $\tau \to \chi$ is induced by objectwise inclusion, and the natural transformation $\tau \to H_\ast(\tau)$ exists as the functor $\tau$ is objectwise acyclic in positive degrees. 
\item The natural transformation $\chi \to H_\ast(\tau)$ is  determined by 
specifying its value in degree 0, the latter which we take to be the projection
onto 0-dimensional homology.
\end{itemize}
The above leads to a description of $\cal J$ as arising from a map in the homotopy category of functors. To obtain an actual map in the functor category itself, one has to work a bit harder, appealing to model category machinery.

\subsection{Examples} For $q \ge 1$, let $X^q = S^q$ be the sphere of dimension $q$ equipped with the CW composition
in which there are two $j$-cells $e^j_\pm$ in each dimension $j \le q$  given by the upper and lower hemispheres
of $S^j \subset S^q$. 
Let $\Sigma := \breve{\cal M}_{0,q}(X^q)$ be the (universal) good protocol 
for $X^q$.
By Proposition \ref{prop:good-type} below, there is a
homotopy equivalence $\Sigma \simeq S^q$).

Let $Y^q$ be the CW complex obtained from $X^{q-1}$ 
by attaching two additional $q$-cells. One of the $q$-cells is attached using the identity map
and the other is attached using the constant map to a 0-cell. Then $\breve{\cal M}(Y^q) = \breve{\cal M}(X^q)$.
In particular, $\Sigma$ is also a good protocol for $Y^q$.
Furthermore, $Y^q$ is homotopy equivalent to $X^q$ (in fact,  $Y^q$ is homeomorphic to the wedge $D^q \vee S^q$).
Consequently, $X^q$ and $Y^q$ are a pair of homotopy equivalent CW complexes having the same number of cells in each dimension. The following result shows that $\cal J$ distinguishes
 the cell structures.

\begin{bigthm}\label{bigthm:examples}  (a). The hypercurrent homomorphism
\[
\cal J_{0,q}: H_q(\Sigma) \otimes H_0(X^q) \to H_q(X^q)
\]
is an isomorphism of vector spaces.

\noindent (b). The hypercurrent homomorphism
\[
\cal J_{0,q}\: H_q(\Sigma) \otimes H_0(Y^q) \to H_q(Y^q)
\]
is trivial.
\end{bigthm}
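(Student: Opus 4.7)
The plan is to identify $\Sigma = \breve{\cal M}_{0,q}$ explicitly for both $X^q$ and $Y^q$, then trace the chain-level construction of $\cal J$ on a concrete cycle representing the generator of $H_q(\Sigma) \cong \Bbb F$.

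\medskip

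\textbf{Setup.} Since $|X^q_j| = |Y^q_j| = 2$ for every $j \in [0,q]$, we have $\cal M_{0,q} \cong \Bbb R^{2(q+1)}$, and the non-good locus is the codimension-$(q{+}1)$ affine subspace where each $W_j$ takes equal values on its two cells. Hence $\Sigma \simeq S^q$, in agreement with Proposition \ref{prop:good-type}. Each region $\Sigma(j)$ has two contractible components (one per ordering of the two $j$-cells), and the combinatorics of the cover $\{\Sigma(j)\}_{j=0}^{q}$ mirror the standard two-cell CW structure on $S^q$. Using this I fix a cycle $\Omega \in I_q(\Sigma)$, built from small simplices, that represents the generator of $H_q(\Sigma)$.

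\medskip

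\textbf{Part (a).} I apply the zig-zag $\chi \to H_\ast(\tau) \xleftarrow{\sim} \tau \to \chi$ of Section \ref{subsec:hypercurrent-chain} to $\Omega \otimes [e^0_+]$. For each small simplex $\sigma$ of $\Omega$ with image in $\Sigma(j)$, the tree functor $\tau(\sigma)$ is a sub-chain-complex of $\bar C_\ast(X^q)$ built from greedy spanning-tree/cotree data in dimension $j$ determined by the ordering of $W_j$ on $\{e^j_+,e^j_-\}$. In $X^q = S^q$ every boundary $\partial\colon C_{j+1}\to C_j$ has rank one and no cell has trivial attaching map, so the two orderings in each dimension produce genuinely distinct tree/cotree splittings. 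Summing contributions over the faces of $\Omega$ and pushing them through the zig-zag yields, up to sign, the cycle $e^q_+ - e^q_-$, which generates $H_q(X^q)$. Since source and target are one-dimensional over $\Bbb F$, this non-triviality forces $\cal J_{0,q}$ to be an isomorphism.

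\medskip

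\textbf{Part (b).} The space $\Sigma$ and the cycle $\Omega$ are the same for $Y^q$, but the cellular chain complex differs: because $e^q_2$ has constant attaching map, $\partial e^q_2 = 0$ and as chain complexes
\[
\bar C_\ast(Y^q) \;\cong\; \bar C_\ast(X^{q-1} \cup e^q_1) \,\oplus\, \bar C_\ast(e^q_2),
\]
with the first summand acyclic (since $X^{q-1}\cup e^q_1 \cong D^q$) and the second a copy of $\Bbb F$ in degree $q$ that generates $H_q(Y^q)$. The key observation is that any subcomplex $\tau(b) \subset \bar C_\ast(Y^q)$ acyclic in positive degrees cannot contain $e^q_2$, for $e^q_2$ is a non-bounding $q$-cycle. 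Hence the tree functor factors through $\bar C_\ast(X^{q-1}\cup e^q_1)$ for every $b \in \Sigma$. It follows that $\cal J(\Omega \otimes [e^0_+])$, which is a chain in $\bar C_\ast(Y^q)$ obtained via the zig-zag, has no $e^q_2$-component, so its class in $H_q(Y^q) = \Bbb F[e^q_2]$ vanishes.

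\medskip

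\textbf{Main obstacle.} The primary difficulty is unpacking the tree functor from Section \ref{sec:spanning-tree} and Definition \ref{defn:tree-functor} with enough precision to execute the zig-zag computation. In Part (b), the argument hinges on showing that $\tau(b)$ never contains $e^q_2$ because of positive-degree acyclicity, a point that must be confirmed directly against the definition of $\tau$ and that then has to be promoted from the objectwise statement to a statement about the induced map on homotopy colimits. In Part (a), the delicacy is orientation bookkeeping: one must verify that the contributions of the two orderings in each dimension assemble into $e^q_+ - e^q_-$ rather than cancel to zero.
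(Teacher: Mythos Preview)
Your proposal is correct and takes essentially the same approach as the paper. For (b) your argument matches the paper's (phrased there as ``there is exactly one $q$-tree, namely $D^q$''); for (a) the paper fills in the orientation bookkeeping you flag by passing to the cellular variant $\cal J^{\text{cw}}$ and realizing $\Sigma$ concretely as the boundary of the cube $[-1,1]^{q+1}$ via the explicit protocol $f(x_0,\dots,x_q)=(0,x_0,\dots,0,x_q)$, so that only the pair of opposite faces indexed by $W_q$ contribute and together yield a generator of $H_q(S^q)$.
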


\begin{rem} The case $q=1$ was discussed in \cite[ex.~7.9]{CKS13}.
\end{rem}

\subsection{Analytical hypercurrents} \label{subsec:analytical}
In what follows, we work over the field $\Bbb F = \Bbb R$ of real numbers.
The space 
$\Sigma$ will be
a compact smooth manifold, possibly with boundary and $X$ will be a
finite connected CW complex with gap $[p,q]$. 

We will describe a map
\begin{align} \label{eqn:map-analytic}
\cal J^{\text{an}}  \: \cal M_{p,q}^{\Sigma} & \to \Omega^\ast(\Sigma;\End(\bar C))\, , \\
\gamma & \mapsto \cal J^\an(\gamma)\notag
\end{align}
where the source of \eqref{eqn:map-analytic} 
is the the function space $C^{\infty}(\Sigma,\cal M_{p,q})$, i.e.,
the space of smooth protocols $\gamma\:\Sigma \to 
 \cal M_{p,q}$ (not necessarily good).  
The target of \eqref{eqn:map-analytic} is the total
complex of the  de~Rham complex of $\Sigma$ with coefficients in the 
chain complex 
\begin{equation} \label{eqn:endo-complex}
(\End(\bar C),\eth)
\end{equation}
of endomorphisms of $\bar C$, with boundary operator $\eth$ (cf.~\S\ref{sec:prelim}).

 The map \eqref{eqn:map-analytic}
 is natural with respect to smooth maps $\Sigma \to \Sigma'$. 
The form $\cal J^\an(\gamma)$ is of total degree zero. We will characterize
it by three axioms. The characterization requires some preparation.

\begin{notation} 
When $\gamma \in\cal M_{p,q}^{\Sigma}$ is understood and there is no ambiguity, 
we drop the argument and write $\cal J^\an$ in place of
$\cal J^\an(\gamma)$ to avoid notational clutter.
\end{notation}

\begin{defn}[De~Rham Complex] If $V$ is a finite dimensional real vector space, let
\[
\Omega^\ast(\Sigma;V) = \Omega^\ast(\Sigma)\otimes_{\Bbb R} V\, ,
\]
where  $\Omega^\ast(\Sigma)$ is the 
de~Rham complex of smooth differential forms
on $\Sigma$ 
Note that the operator $d\otimes \text{id}_V$,
makes $\Omega^\ast(\Sigma;V)$ into a cochain complex,
where $d$ is the usual exterior derivative on differential forms.  

Note that
$\Omega^k(\Sigma;V)$ is identified with the space of smooth sections of the 
bundle over $\Sigma$ whose fiber at
$b\in \Sigma$ is the space of linear maps 
\[
\Lambda^k T_b\Sigma \to
V\, ,
\] where $\Lambda^k T_b\Sigma$ is the $k$-th exterior power of the tangent space
to $\Sigma$ at the point $b$.

If $V_\ast$ is a graded vector space, then we define $\Omega^\ast(\Sigma;V_\ast)$ to be
the graded cochain complex $\oplus_j \Omega^\ast(\Sigma;V_j)$. If $(V_\ast,\partial_V) $ is a chain complex, then $\Omega^\ast(\Sigma;V_\ast)$ has the structure of a bi-complex, and the 
 associated total complex has boundary operator $d + (-1)^\ast\partial_V$. 
\end{defn}

\begin{notation} If $\phi\: V_\ast \to V_\ast$ is a graded map, then
it induces in the evident way a map $\Omega^\ast(\Sigma;V_\ast) \to \Omega^\ast(\Sigma;V_\ast)$, which we also denote by $\phi$.
\end{notation}

\begin{defn}[Modified Inner Product]
For $W_\bullet\in \cal M_{p,q}$, the {\it modified inner product} on $\bar C_k$ is defined
on basis elements $x,y\in X_{k+p}$ by
\[
\langle x,y\rangle_W := e^{W_{k+p}(x)}\delta_{xy}
\]
where $\delta_{xy}$ is the Kronecker delta of $x$ and $y$.  
Similarly, if $\gamma\: \Sigma \to \cal M_{p,q}$ is a protocol, 
then we obtain a family of  modified inner products on $\bar C_k$ parametrized by $\Sigma$.
\end{defn}

\begin{defn} Let 
\[
\bar D_{W\ast} \subset \bar C_\ast
\]
be the graded vector subspace defined as follows:
\begin{itemize}
\item If $j \notin  [0,q-p]$, then
we take $\bar D_{Wj}$ to be trivial.
\item If $j \in (0,q-p]$, then
$\bar D_{Wj}$ is defined to be the orthogonal complement to the subspace of $j$-cycles $\bar Z_j$ in the modified inner product.  
\item If $j = 0$, then $\bar D_{W0}$  is defined to be the orthogonal complement to the 
subspace of $0$-boundaries $\bar B_0$ in the modified inner product.
\end{itemize} 
\end{defn}

We return to the problem of characterizing $\cal J^\an$.
By the dimensional constraints on $\bar C$, the form is a finite direct sum
\[
\cal J^\an  = \sum_{\ell= 0}^{q-p} \cal J^\an_\ell\, ,
\]
in which $\cal J^\an_\ell \in  \Omega^{\ell}(\Sigma;\End(\bar C)_\ell)$.

The forms $\cal J^\an_{\ell}$ are subject to the following axioms:
\begin{itemize}
\item[(A1)] (Continuity Equation). For $\ell \ge 1$, the  identity
\[
\eth \cal J^\an_\ell = d \cal J^\an_{\ell-1}
\]
is satisfied, where $\eth$ is given by \eqref{eqn:endo-complex}.
\item[(A2)] (Orthogonality Condition). The form
\[
\cal J^\an_\ell \in \Omega^\ell(\Sigma; \End(\bar C)_\ell)
\]
lies in the subspace $\Omega^\ell(\Sigma; \hom(\bar C,\bar D_W)_\ell)$.
\end{itemize}
Axiom A1 implies that the image of 
$\cal J^\an_{0}\: \Sigma \to \End(\bar C)_0$
lies in subspace of
chain maps $\bar C \to \bar C$. 
By taking $0$-th homology, we infer that
$\cal J^\an_{0}$ induces
 a $0$-form
 \begin{equation} \label{eqn:homology-induced}
\Sigma \to \hom(H_0(\bar C), H_0(\bar C))\, .
\end{equation}
\begin{itemize}
\item[(A3)] (Initial Value).  The form \eqref{eqn:homology-induced} is constant with value the identity map.
\end{itemize}

\begin{bigthm} \label{bigthm:uniqueness} 
There exists precisely one map \eqref{eqn:map-analytic}
satisfying axioms A1-A3.
\end{bigthm}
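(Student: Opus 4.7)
The plan is to establish existence and uniqueness simultaneously by induction on $\ell \in \{0, 1, \dots, q-p\}$, working pointwise at $b \in \Sigma$ and then promoting to smooth forms via the smoothness of the modified inner product in $b$. The key linear-algebra input is the orthogonal decomposition $\bar C_j = \bar Z_j \oplus \bar D_{Wj}(b)$ for $j \geq 1$ (respectively $\bar B_0 \oplus \bar D_{W0}(b)$ for $j = 0$), which makes $\partial$ restrict to an isomorphism $\bar D_{Wj}(b) \to \bar B_{j-1}$ for $j \geq 1$. The gap hypothesis on $X$ supplies $H_k(\bar C) = H_{k+p}(X^{(q)}, X^{(p-1)}) = 0$ for $0 < k < q-p$ via the long exact sequence of the pair.

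For the base case $\ell = 0$, the remark following A3 already gives that $\cal J^\an_0(b)$ is a chain map $\bar C \to \bar C$; combined with A2 and A3, the only candidate is the chain map which projects $\bar C_0$ onto $\bar D_{W0}(b)$ along $\bar B_0$ and vanishes on $\bar C_j$ for $j \geq 1$. Existence follows by direct verification. For uniqueness, the difference $\delta$ of two candidates is a chain map with image in $\bar D_W(b)$ inducing zero on $H_0$: since $\delta(\bar C_0) \subset \bar D_{W0}(b) \cap \bar B_0 = 0$, we get $\delta|_{\bar C_0} = 0$, and induction on $j \geq 1$ using $\bar Z_j \cap \bar D_{Wj}(b) = 0$ together with the chain-map property forces $\delta = 0$ on all of $\bar C$.

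For the inductive step $\ell \geq 1$, assume $\cal J^\an_0, \dots, \cal J^\an_{\ell-1}$ are known. For uniqueness, the difference $\delta$ of two candidates satisfies $\eth \delta = 0$ and takes values in $\bar D_W$; pointwise, $\psi := \delta(b)(v_1, \dots, v_\ell)$ satisfies $\partial \circ \psi = (-1)^\ell \psi \circ \partial$ with $\psi_j(\bar C_j) \subset \bar D_{W,j+\ell}(b)$. Induct on $j$: at $j = 0$, since $\psi_{-1} = 0$, one has $\partial \circ \psi_0 = 0$, so $\psi_0(\bar C_0) \subset \bar Z_\ell \cap \bar D_{W\ell}(b) = 0$; the step $j-1 \mapsto j$ is analogous. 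For existence, define $\psi_j := (\cal J^\an_\ell)_j(b)(v_1, \dots, v_\ell)$ inductively in the chain degree by
\[
\psi_j \,:=\, \bigl(\partial|_{\bar D_{W,j+\ell}(b)}\bigr)^{-1}\bigl(\omega_j + (-1)^\ell \psi_{j-1}\partial\bigr),
\]
where $\omega := (d\cal J^\an_{\ell-1})(b)(v_1, \dots, v_\ell)$ and $\psi_{-1} := 0$. Applying $\partial$ to the input and using $\eth\omega = 0$ (from $d\eth = -\eth d$, axiom A1 at degree $\ell-1$, and the chain-map property for the case $\ell = 1$), together with the inductive relation for $\psi_{j-1}$, shows that the input is a cycle; the vanishing $H_{j+\ell-1}(\bar C) = 0$ then promotes it to a boundary.

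The main obstacle I anticipate is the verification that the input to $(\partial|_{\bar D_{W,j+\ell}(b)})^{-1}$ lies in $\bar B_{j+\ell-1}$ at each inductive step: while the cycle condition is formal, promoting cycle to boundary requires the gap hypothesis and separate checks at the edges of the chain degree range. At $\ell = 1$, $j = 0$ one must check directly that $\omega_0 = (d\cal J^\an_0)(v)|_{\bar C_0}$ lies in $\bar B_0$, which follows from the fact that the derivative of a curve in the fixed subspace $\bar B_0$ stays in $\bar B_0$; and at the upper end $j + \ell = q - p + 1$ one must verify that $\omega_j + (-1)^\ell \psi_{j-1}\partial$ vanishes, so that the forced value $\psi_j = 0$ is consistent. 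Once these checks are done, smoothness in $b$ is inherited from the smoothness of the modified inner product, yielding a well-defined form $\cal J^\an_\ell \in \Omega^\ell(\Sigma; \hom(\bar C, \bar D_W)_\ell)$ that satisfies A1, A2, A3 by construction.
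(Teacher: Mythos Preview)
Your proposal is correct and follows essentially the same approach as the paper. The paper separates existence (an explicit recursion $\cal J^\an_\ell := \partial_W^\dagger\, d\cal J^\an_{\ell-1}$ using the Moore--Penrose pseudoinverse, after which A1--A3 are verified) from uniqueness (the difference $\alpha_\ell$ of two solutions is shown to vanish by the same double induction on $\ell$ and on chain degree $k$ that you use), whereas you interleave the two; but your operator $(\partial|_{\bar D_{W,j+\ell}})^{-1}$ applied to boundaries is exactly $\partial_W^\dagger$, and your uniqueness argument is identical to the paper's. The upper-edge check you flag is in fact automatic once one observes (as the paper does implicitly) that the construction yields $\psi_j = 0$ for all $j \ge 1$, since $\omega_j = 0$ and $\psi_0\partial = 0$.
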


\subsection{Quantization} Our conventions for our last main result are as follows:
$\Bbb F = \Bbb R$ will be the real numbers and
$\Sigma$ will be a smooth manifold. The protocol $\gamma\:\Sigma \to \cal M_{p,q}$ 
will be a smooth map. We let $I_\ast(\Sigma)$ denote the chain complex of small
smooth singular simplices in $\Sigma$.
For any real number $\beta >0$, 
one has an associated protocol $\beta\gamma$ defined by pointwise scalar multiplication in $\cal M_{p,q}$.\footnote{The parameter $\beta$ is to be regarded as inverse temperature.}

Let $R_\ast \subset I_\ast(\Sigma)$ be any chain subcomplex generated by small smooth singular simplices of dimension $\le q-p$.
We define 
\begin{equation} \label{eq:anal-top-comparison}
\cal J^\an(\beta\gamma)_\sharp \: R_\ast \to \End(\bar C)
\end{equation}
by the formula
\[
\cal J^\an(\beta\gamma)_\sharp( \Delta^j @>\sigma >> \Sigma) \,\, :=\,\,  \textstyle \int_{\Delta^j} \sigma^\ast(\cal J^\an(\beta\gamma))\, .
\]
In \S\ref{section:analytical}
we will show that $\cal J^\an(\beta\gamma)_\sharp$ is a chain map.
The following result pins down the relationship between the analytical and topological hypercurrent maps (compare \cite[thm.~A]{CKS13} and \cite[thm.~C]{CCK15b}).

\begin{bigthm}[Quantization] \label{bigthm:quantize} In
the low temperature limit $\beta\to \infty$, the chain homotopy class of $\cal J^\an(\beta\gamma)_\sharp$
converges to the chain homotopy class of the restriction to $R_\ast$ of the hypercurrent chain map $\cal J$.
\end{bigthm}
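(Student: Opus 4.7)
The plan is to reduce the comparison to a local, simplex-by-simplex computation, and then to use the low-temperature asymptotics of the modified inner product to identify the limit of $\cal J^\an(\beta\gamma)_\sharp$ with the tree-functor description of $\cal J$ summarized in \S\ref{subsec:hypercurrent-chain}.

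The first step is a locality reduction. Since $R_\ast$ is generated by small smooth singular simplices, and since both $\sigma \mapsto \int_{\Delta^j} \sigma^\ast\cal J^\an(\beta\gamma)$ and the topological chain map $\cal J$ depend on $\gamma$ only through its restriction to $\sigma(\Delta^j)$, it suffices to analyze a single small simplex $\sigma\colon\Delta^j \to \Sigma$. On such a simplex we may fix an index $j_0\in [p,q]$ for which $W_{j_0}(b)$ is one-to-one throughout $\sigma(\Delta^j)$, and work in the open subset of $\cal M_{p,q}$ characterized by this property.

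The heart of the argument is the asymptotic analysis of axioms A2 and A3. The modified inner product $\langle x,y\rangle_{\beta W} = e^{\beta W_{k+p}(x)}\delta_{xy}$ degenerates as $\beta\to\infty$ in a controlled way: on the open locus where $W_{j_0}$ is one-to-one, the orthogonal complement $\bar D_{\beta W\ast}$ converges to a subspace determined entirely by the combinatorial ordering of the values of $W_{j_0}$. A direct computation identifies this limiting subspace with the one selected by the greedy algorithm on the cells of $X$; equivalently, with the objectwise image of the tree functor $\tau$ of \eqref{eqn:natural-pre-hyper}. Axiom A2 therefore converts, in the limit, into the defining combinatorial property of $\tau$, while axiom A3 matches the normalization of the natural transformation $\chi \to H_\ast(\tau)$. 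Combining this identification with the continuity equation A1, one solves the recursion $\eth \cal J^\an_\ell = d\cal J^\an_{\ell-1}$ in the limit and integrates the resulting top-degree form over $\Delta^j$. The output is the component on $\sigma$ of the chain map induced by the homotopical span $\chi \to H_\ast(\tau) \leftarrow \tau \to \chi$ of \eqref{eqn:natural-pre-hyper}, which is a representative of $\cal J$ restricted to $R_\ast$. Since $\cal J$ is well-defined only up to contractible choice by Theorem \ref{bigthm:hypercurrent-chain}, agreement at this level of precision yields the required chain homotopy.

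The main obstacle will be justifying the interchange of the limit $\beta\to\infty$ with integration over $\Delta^j$. The forms $\cal J^\an_\ell(\beta\gamma)$ develop singular behavior in directions transverse to the loci where two values of $W_{j_0}$ coincide; however, because $\sigma$ is small, these loci remain away from $\sigma(\Delta^j)$ for the index $j_0$, and a dominated-convergence argument combined with a partition of unity indexed by the possible weight orderings should suffice. A secondary issue is upgrading the local chain-level identity to a globally defined chain homotopy, which is handled through the functoriality of the tree-functor construction and the objectwise acyclicity of $\tau$ in positive degrees.
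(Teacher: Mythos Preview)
Your locality reduction matches the paper's Lemma \ref{lem:sigma-reduction}, but the core asymptotic step has a genuine gap. The claim that $\bar D_{\beta W\ast}$ converges to the image of the tree functor is not correct as stated: the modified inner product in degree $\ell$ depends only on $W_{\ell+p}$, not on $W_{j_0}$, so for $\ell \ne j_0 - p$ the limit of $\bar D_{\beta W,\ell}$ is governed by weights that need not be one-to-one on $\sigma(\Delta^j)$, and there is no greedy-algorithm tree available in those degrees. Moreover, the tree functor assigns to $\sigma$ a single $k_\sigma$-tree $T_\sigma$, so $\bar C_\ell(T_\sigma)$ equals all of $\bar C_\ell$ for $\ell < k_\sigma - p$ and vanishes for $\ell > k_\sigma - p$; this pattern does not arise as a limit of orthogonal complements. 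Your proposal to ``solve the recursion $\eth\cal J^\an_\ell = d\cal J^\an_{\ell-1}$ in the limit'' therefore has no clear meaning, and the partition-of-unity remedy you mention does not address this, since the difficulty occurs at degrees other than $j_0$.

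The paper avoids this by a two-step indirect route. First, it expresses $\cal J^\an_\ell$ via a \emph{Kirchhoff decomposition} (Proposition \ref{prop:k-decomp}): $\cal J^\an_\ell = \sum_{\omega_\bullet} f_\ell(\omega_\bullet)\,\varrho_\ell(\omega_\bullet)$, a finite sum over $\ell$-orchards (tuples of trees, one in each degree) weighted by Boltzmann-type factors $\rho_T = \tau_T^2 e^{-\beta W_T}/\Delta$. These factors give clean exponential control: after integration over $\Delta^j$, the component of $\cal J^{\an,\beta}_{|R_\sigma}$ that does \emph{not} land in $\hom(\bar C_0,\bar C_j(T_\sigma))$ is bounded by $C\beta^j e^{-\beta E}$ with constants depending only on $\sigma$ (Proposition \ref{prop:sigma-reduction}). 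Second --- and this is the step your direct computation tries to bypass --- the paper never identifies the surviving tree-part explicitly. Instead it uses rigidity: the difference $z_\beta := \cal J^{\an,\beta}_{|R} - \cal J_{|R}$ lies in the vector space $\cal H_R(0)$ of cocycle-classes inducing zero on $H_0$, the exponential estimate forces $z_\beta$ toward the tree subspace $\cal T^\sharp_R$, and $\cal H_R(0)\cap\cal T^\sharp_R = \{0\}$ by the uniqueness characterization of the hypercurrent (Proposition \ref{prop:exist-unique-thc-reform} and Lemma \ref{lem:one-to-one}, restating Corollary \ref{cor:exist-unique-variant}). This uniqueness argument is what makes the proof work; one does not need to know what the tree-part of $\cal J^{\an,\beta}$ actually converges to.
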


\begin{rem} For a more detailed statement, see Theorem \ref{thm:quantization}.
Theorem \ref{bigthm:quantize} is to be viewed as a  `fractional' quantization result because
$\cal J$ is defined over the rational numbers (and extended by scalars to the reals) 
whereas the analytical current is not. It can also be thought of as an index
theorem relating an analytically defined invariant to a topologically defined one.
\end{rem}

\subsection{Summary}
Suppose that $\gamma\: \Sigma\to \cal M_{p,q}(X)$ is a smooth protocol, with $\Sigma$ a closed Riemannian manifold of
dimension $n := q-p$. The paper \cite{CKS13} deals with the case $p=0$ and $n =1$. We showed there that $\gamma$  determines a continuous time Markov chain whose state diagram is the double of the $1$-skeleton of $X$. The evolution of the system in this case is governed by the Kolmogorov equation which is a first order ODE acting on distributions (0-chains).  
The average current was a homology clas associated with the {\it adiabatic limit} of the system (the adiabatic limit is given by rescaling the metric on $\Sigma$ 
so that the total length tends to $\infty$). In \cite{CCK16} we
generalized \cite{CKS13} to $p > 0$ with $q=p+1$. In this case the double of the 1-skeleton of $X$ is replaced by
the {\it cycle incidence graph} of the $(p+1)$-skeleton of $X$. The latter is the (possibly infinite) graph whose vertices are the cellular $p$-cycles of $X$ in a given homology class in which an edge is determined by an elementary homology 
between cycles, where an elementary homology is given by taking a suitable scalar multiple of a  $(p+1)$-cell.
In this case the process is a continuous time biased random walk on the cycle incidence graph. To avoid the technical problems
of working with the Kolmogorov equation acting on infinite dimensional Hilbert spaces, 
we worked with a  {\it dynamical equation} which acts on the finite dimensional vector space of cellular $p$-cycles in $X$.
By contrast, when $q-p > 1$ we do not know of any physical or dynamical system which gives rise to the hypercurrent. When $p=0$ and $q-p> 1$, then we suspect that the hypercurrent is an invariant associated with an as yet to be defined {\it statistical field theory} based on $(q-p)$-branes. The above discussion is summarised in the following table:

{\tiny
\begin{table}[ht]
\centering 
\begin{tabular}{c c c c c c} 
\hline\hline 
$p$ & $q-p$ & Process & States & State Diagram & Invariant \\ [0.5ex] 
\hline 
 0 & 1 & biased random walk & vertices of $X$ & double of $X^{(1)} $ & average current \\ 
$>  0$ & 1 & biased random walk  & $p$-cycles of $X$ & cycle-incidence graph of $X^{(q-p)}$ & average current  \\
 0 & $> 1$ &  $(q-p)$-brane theory? & ? & ? & hypercurrent   \\
\hline 
\end{tabular}
\label{table:nonlin} 
\end{table}
}

\begin{out} In Section \ref{sec:prelim} we review the standard properties of chain complexes over a field of characteristic zero. 
Section \ref{sec:functor-cat-chain} introduces the projective model structure on the  category of chain complex valued functors from a suitable small category. We also prove a kind of acyclic models result which will enable us to construct the hypercurrent map. Section \ref{sec:spanning-tree} reviews spanning trees and spanning co-trees in higher dimensions;  this material is lifted from \cite{CCK15a} and \cite{CCK15b}. In section \ref{sec:hyper-construct} we construct the hypercurrent chain map. In section
\ref{sec:good-weights}, we show that the space of good weights has the homotopy type of a wedge of spheres. Section \ref{sec:proof-props-ex} contains proofs of Theorems \ref{bigthm:properties} and  \ref{bigthm:examples}. In section
\ref{sec:robust} we determine the homotopy type of the space of robust weights and extend  the hypercurrent homomorphism to robust protocols.  In Section \ref{section:analytical} we define and axiomatically characterize the analytical hypercurrent.
In section \ref{sec:quantize} we provide a proof of the Quantization Theorem. Lastly, in section \ref{sec:space-level} we construct space level hypercurrent maps in certain cases. 
\end{out}

\begin{acks} The third author was partially supported by Simons Foundation Collaboration
Grant 317496.
\end{acks}

 \section{Preliminaries}\label{sec:prelim}
\subsection{Chain complexes}
Let 
$\Bbb F$
 be a field of characteristic zero. 
Let $C$ be a chain complex over $\Bbb F$ with boundary operator
$\partial$.
In what follows all chain complexes are unbounded in the sense
that they indexed over $\Bbb Z$.
Recall that a quasi-isomorphism  $C\to D$ is a chain map which induces
an isomorphism on homology. A morphism $C\to D$ is a quasi-isomorphism if and only if it
is a chain homotopy equivalence, since we are working over a field. 
Additionally for any chain complex $C$, there is a quasi-isomorphism
\[
(C,\partial) \to (H_\ast(C),0)
\]
which induces the identity map in homology, where the target is the
homology of $C$ with trivial boundary operator. The quasi-isomorphism is constructed as follows:
equip $C$ with an inner product in each degree and decompose $C_k$ orthogonally as
\[
B_k \oplus {\cal H}_k \oplus B_k^\ast
\]
where $B_k$ is the vector space of $k$-boundaries and ${\cal H}_k$ is the orthogonal complement of $B_k$ in the vector space of $k$-cycles $Z_k$.  With respect to this
decomposition, the boundary operator is trivial on the first two summands and
gives an isomorphism $B_k^\ast @> \cong >> B_{k-1}$. The desired quasi-isomorphism is then defined by sending a vector $(x\oplus y \oplus z)$ to $[x] \in H_k(C)$, where $[x]$
denotes the homology class of the cycle $x\in  {\cal H}_k$.

Suppose $f\: C\to D$ is a chain map. If $f$ is null homotopic, then $f$ induces the trivial map in homology. The converse is also true as is easily seen by replacing $C$ and $D$ with the chain map $f_\ast \: (H_\ast(C),0)\to (H_\ast(D),0)$.

Let $\hom(C,D)$ be the internal hom-complex in which
\[
\hom(C,D)_n = \prod_{i\in \Bbb Z} \hom(C_i,D_{n+i})\, ,
\]
with boundary operator
\[
\eth(f) = \partial_D f - (-1)^{n} f \partial_C\, ,  \qquad f\in \hom(C,D)_n\, .
\]  

\begin{notation} When $C = D$ we write
\[
\End(C) = \hom(C,C)\, .
\]
\end{notation}

\subsection{Model structure}
Let $\Ch$ denote the category of unbounded chain complexes over 
$\Bbb F$ equipped with the projective model structure \cite[\S2.3]{Hovey}.
A weak equivalence in $ \Ch$
is a quasi-isomorphism. 
A morphism is a fibration if and only if it is surjective in every degree.
Every object of $\Ch$ is fibrant.

 A morphism is a cofibration if it has the left lifting property with respect to the trivial (acyclic) fibrations. An equivalent characterization of cofibrations is given by
attaching cells: for $j \in \Bbb Z$, we let $D^j_\ast$ be the chain complex
which is given by $\Bbb F$ in degrees $j$ and $j-1$ and trivial otherwise,
where the boundary operator in degree $j$ is given by the identity map. Let
$S^{j-1}_\ast$ be the chain complex which is $k$ in degree $j-1$
and trivial otherwise;  one has an inclusion $S^{j-1}_\ast \to D^j_\ast$. Then
the cofibrations of $\Ch$ are generated by these maps in the following sense:
if $C_\ast$ is a chain complex and $f\: S^{j-1}_\ast \to C_\ast$ is a chain map, one can form the algebraic mapping cone
\[
M(f)_\ast := C_\ast \oplus_f D^j_\ast
\]
which is defined as the pushout of the diagram $C_\ast @< f << S^{j-1}_\ast \subset D^j_\ast$. 
With respect to the inclusion $C_\ast \to M(f)_\ast$, 
one says  that $M(f)_\ast$ is obtained from $C_\ast$ by attaching a $j$-cell. 
A cofibration of $\Ch$ is 
a  map $C_\ast \to D_\ast$ which is the result of iteratively attaching (possibly transfinitely many) cells, or is a retract thereof.

 Every bounded
below object is cofibrant \cite[lem.~2.3.6]{Hovey}. A morphism is a cofibration
if and only if it is degreewise injective with cofibrant cokernel 
\cite[prop.~2.3.9]{Hovey}.
In particular, a degreewise injective morphism of bounded below objects is a 
cofibration.

\subsection{Truncation/shift\label{subsec:trunc-shift}}
Recall from the introduction that $\bar C_\ast$ is the chain complex
given by $\bar C_j = C_{j+p}(X^{(q)},X^{(p-1)})$, in other words
\[
\bar C_\ast = C_\ast((X^{(q)},X^{(p-1)})[p]
\]
is the $p$-fold desuspension of the cellular chain complex 
of the CW pair $(X^{(q)},X^{(p-1)})$.

\begin{rem} \label{rem:proj-rest-surj}
For any $X$, there are evident homomorphisms 
\[
H_p(X) \to H_0(\bar C) \,\, 
\text{ and } \,\, H_{q-p}(\bar C)  \to H_q(X)\, ,
\] 
where the former is injective and the latter
is surjective. 
In particular, 
projection/restriction defines a surjective homomorphism of vector spaces
\[
\hom(H_0(\bar C),H_{q-p}(\bar C)) 
\to \hom(H_p(X),H_q(X)) \, .
\]
This last map shows $\bar C_\ast$ can be used in place of 
the cellular chain complex $C_\ast(X)$ when attempting to define the 
hypercurrent homomorphism $\cal J_{p,q}$.
\end{rem}

\section{Functor categories of chain complexes \label{sec:functor-cat-chain}}

Let $I$ be a partially ordered set such that any element $x\in I$ 
has finite degree in the sense that the longest chain of strictly increasing objects
less than $x$ is finite. Call the length of this chain $\deg(x)$. The function
\[
\deg\: I \to \Bbb N
\]
is a morphism of partially ordered sets, i.e., it is a functor.

\begin{ex}\label{ex:simplex-cat} Let $S_B$ denote the poset of singular simplices in a space $B$.
An element of $S_B$ is just a singular simplex $\Delta^j\to B$ and the partial ordering
is defined by facial inclusion. Then every element of $S_B$ has finite degree.
\end{ex} 

As in \S\ref{sec:prelim}, $\Ch$ will denote the category of unbounded chain complexes over $\Bbb F$.
By \cite[th.~5.1.3]{Hovey}, the functor category 
\[\Ch^I\, , \]
forms a model category
in which the weak equivalences/fibrations are the objectwise equivalences/fibrations  and the cofibrations are defined to be those
maps satisfying the left lifting property with respect to the acyclic fibrations.

We now give an explicit characterization of the cofibrations. Suppose
If $X\: I \to  \Ch$ is a functor and $r\in I$ is an object. Define the 
{\it latching object}
\[
L_rX := \colim_{s < r}X(s)\, .
\]
The assignment $X \mapsto L_rX$ defines a functor $L_r \: I \to  \Ch$, and
 one has a natural latching map $L_r(X) \to X(r)$.  Then $X$ is cofibrant if and only if the latching map is a cofibration of $ \Ch$ for all objects $r$. 
More generally, a morphism $A \to X$ of $\Ch^I$ is a cofibration if and only if
for every $r\in I$ the relative latching map 
\[
L_r X\oplus_{L_r A} A(r) \to X(r)
\]
is a cofibration of $\Ch$, where the domain of this map is 
the pushout of $L_r X @<<< L_r A @>>> A(r)$.

\begin{ex} 
Fix $E\to B$, a Serre fibration of spaces. 
 Define a functor
\[
F\: S_B \to \Ch
\]
by the rule 
\[
F(\sigma\: \Delta^j \to B) =  S_\ast(\sigma^\ast E)\, ,
\] 
where $\sigma^\ast E$ denotes the pullback of $E \to B$ along $\sigma$
and $S_\ast(\sigma^\ast E)$ is its singular chain complex over $\Bbb F$.  
Then the functor $S_\ast$ is cofibrant.
\end{ex}
 
Recall that a chain complex $C_\ast$ is bounded below by 0 if $C_k = 0$ for $k < 0$.

\begin{defn} A functor $F\: I \to \Ch$ is  {\it bounded below by 0}
if the chain complex $F(r)$ is is  bounded below by 0 for all $r\in I$.

A functor $G\: I \to  \Ch$ 
is  {\it positively acyclic} if 
\begin{itemize}
\item $G$ is bounded below by 0, and
\item for all objects $r\in I$, the homology groups
 $H_\ast G(r)$ are trivial in degrees $\ast \ne  0$.
 \end{itemize}
Recall that {\it cofibrant approximation} of an object $F\in \Ch^I$
consists of a cofibrant object $F^c$ equipped with a weak equivalence $F^c @> {}_\sim >> F$.
\end{defn}

\begin{ex} If $G$ is positively acyclic, then the evident natural
transformation 
\[
G \to H_\ast G\, ,
\]
is an acyclic fibration (i.e., both a fibration and a weak equivalence).
\end{ex}

The next result is similar in spirit to the classical acyclic model theorem, although the technical assumptions are somewhat different.

\begin{prop}[``Acyclic Models''] \label{prop:lift}  
Let $F,G\: I \to \Ch$ be functors where $F$ is bounded below by 0 and
$G$ is positively acyclic. Let $F^c @>{}_\sim>>  F$ be a cofibrant approximation. Then 
any natural transformation $\alpha_\ast \:H_\ast F \to H_\ast G$ is induced by a natural transformation 
\[
F^c\to G\, ,
\] 
In particular, when $F$ is cofibrant
there is a natural transformation $F \to G$ that induces $\alpha_\ast$.
\end{prop}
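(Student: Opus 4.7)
The plan is to factor the desired natural transformation $F^c\to G$ through the natural projection $G\to H_\ast G$ and then invoke the lifting axiom of the projective model structure on $\Ch^I$. Because $G$ is positively acyclic, $H_\ast G$ is concentrated in degree $0$ and the objectwise projection $\pi_G\colon G\to H_\ast G$ (given by $G(r)_0\to H_0 G(r)$ in degree $0$, zero elsewhere) is a natural chain map; the chain-map condition in degree $0$ uses that $G$ is bounded below by $0$, so $\partial_0=0$ on $G$. Since $\pi_G$ is objectwise surjective in every degree and induces an isomorphism on $H_\ast$, it is an acyclic fibration in $\Ch^I$.

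Next I would reduce the problem to constructing a natural transformation $f\colon F^c\to H_\ast G$ whose induced map on homology equals the composition $H_\ast F^c\xrightarrow{\cong}H_\ast F\xrightarrow{\alpha_\ast}H_\ast G$. Given such an $f$, since $F^c$ is cofibrant and $\pi_G$ is an acyclic fibration, the lifting axiom produces $\tilde f\colon F^c\to G$ with $\pi_G\circ\tilde f=f$; passing to homology and using $H_\ast(\pi_G)=\mathrm{id}$ gives $H_\ast(\tilde f)=\alpha_\ast$ after identifying $H_\ast F^c\cong H_\ast F$ via $F^c\xrightarrow{\sim}F$.

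To construct $f$, I would exploit that $H_\ast G$ is concentrated in degree $0$, so a natural chain map $F^c\to H_\ast G$ amounts to a natural linear map $F^c_0\to H_0 G$ that vanishes on $\partial F^c_1$ and is compatible with $\partial_0$. Arranging (by the small object argument applied only to generating cofibrations $S^{j-1}_\ast\to D^j_\ast$ with $j\ge 0$, which suffices since $F$ is bounded below by $0$) that $F^c$ is itself bounded below by $0$, the projection $F^c_0\to H_0 F^c$ becomes a natural chain map, and composing with the isomorphism $H_0 F^c\cong H_0 F$ induced by $F^c\xrightarrow{\sim}F$ and with $\alpha_0\colon H_0 F\to H_0 G$ yields the desired $f$. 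I expect the main obstacle to be this bounded-below arrangement on $F^c$: without it the projection $F^c_0\to H_0 F^c$ fails to commute with $\partial_0$. This is handled either by working with the explicit small-object cofibrant approximation above (which is in fact how $F^c$ will be produced in applications), or by transferring the resulting lift along a chain-homotopy equivalence between any two cofibrant approximations of $F$, using that chain-homotopic maps induce equal maps on homology.
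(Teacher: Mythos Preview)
Your argument is correct and is essentially the paper's proof: recognize $G\to H_\ast G$ as an acyclic fibration in $\Ch^I$, manufacture a natural chain map $F^c\to H_\ast G$ realizing $\alpha_\ast$, and lift via the model-category axiom. The one place you work harder than necessary is in building the degree-zero component. You route it as $F^c_0\to H_0 F^c\cong H_0 F\xrightarrow{\alpha_0} H_0 G$, which forces you to arrange that $F^c$ is bounded below by $0$ (and then argue that any other cofibrant approximation is chain-homotopy equivalent). The paper instead sets $u\colon F^c_0\to F_0\to H_0 F$ using the given approximation map together with the hypothesis that $F$ (not $F^c$) is bounded below by $0$, so that $F_0=Z_0F$; the required vanishing $u\circ\partial_1^{F^c}=0$ then follows immediately from the chain-map property of $F^c\to F$, and no constraint on $F^c$ in negative degrees is needed.
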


\begin{proof} The function $\deg$ equips $I$ with the structure of a direct
category in the sense of \cite[defn.~5.1.1]{Hovey}. Let $F^c_0$ be the degree zero part of $F^c$,
and let $u\: F^c_0 \to H_0F$
be the evident map, . 
Consider the a lifting problem
\begin{equation} \label{eqn:lift-diagram}
\xymatrix{
& G \ar[d] \\
F^c \ar[r]_(.4){\alpha} \ar@{..>}[ur]
& H_\ast G\, ,
}
\end{equation}
where $\alpha\:F^c\to H_\ast G$ is the natural transformation defined
by
\[
\begin{cases} 
\alpha_\ast\circ u \quad & \text{in degree 0}\\
0  &\text{otherwise.}
\end{cases}
\]
The existence of a lift follows directly from the model category lifting axiom since $F^c$
is cofibrant and $G \to H_\ast G$ is an acyclic fibration.
\end{proof}

\begin{cor}[Rigidity] \label{cor:lift} Assume $F$, $G$ and $\alpha_\ast$ satisfy the conditions
of Proposition \ref{prop:lift}. Then 
a natural transformation 
$F^c\to G$ inducing $\alpha_\ast$ exists and  is unique up to contractible choice.
\end{cor}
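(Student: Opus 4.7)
Existence is immediate from Proposition~\ref{prop:lift}. For uniqueness up to contractible choice, the plan is to show that the entire space of lifts of $\alpha_\ast$ is contractible, not merely nonempty.

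The natural framework is the dg-enrichment of $\Ch^I$. I would write $\hom^I(F,G)$ for the chain complex whose degree-$n$ part consists of natural transformations $F \to G$ of chain degree $n$, with boundary induced by $\eth$ from the internal hom in $\Ch$. A lift of $\alpha_\ast$ then corresponds to a $0$-cycle of $\hom^I(F^c,G)$ whose image is $\alpha$ under the induced map
\[
p\: \hom^I(F^c,G) \to \hom^I(F^c, H_\ast G)\, .
\]
The key step is to show that $p$ is an acyclic fibration of chain complexes. This is the enriched (SM7-type) lifting property for the cofibration $0 \hookrightarrow F^c$ and the acyclic fibration $G \to H_\ast G$ (the latter being acyclic because $G$ is positively acyclic). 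I would prove this by induction on the direct category structure on $I$ supplied by $\deg$: cofibrations in $\Ch^I$ are built by successive pushouts of the relative latching maps $L_r F^c \hookrightarrow F^c(r)$, which are cofibrations in $\Ch$, and one checks the enriched lifting property stage by stage by reducing to the ordinary SM7 axiom in $\Ch$ itself.

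Granting this, the fiber of $p$ over $\alpha$ is quasi-isomorphic to $\Bbb F$ concentrated in degree $0$. Concretely, any two lifts of $\alpha$ are related by a natural chain homotopy, any two such chain homotopies by a natural secondary homotopy, and so on. This is the precise sense in which the lift is unique up to contractible choice. The principal obstacle is verifying the enriched lifting property in the functor category; this is essentially bookkeeping, but requires some care in handling the interplay between the direct-category structure on $I$ and the cofibrations of $\Ch$.
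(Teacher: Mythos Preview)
Your approach is essentially the paper's: both argue that the induced map $\hom^I(F^c,G)\to\hom^I(F^c,H_\ast G)$ is an acyclic fibration (equivalently, has contractible kernel) because $G\to H_\ast G$ is an acyclic fibration and $F^c$ is cofibrant, and conclude that the space of lifts over $\alpha$ is contractible. The paper simply asserts this implication, whereas you are more explicit about invoking the enriched (SM7) lifting property and its inductive verification via the direct-category structure on $I$.
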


 \begin{proof} Once basepoint $F^c \to G$ in the affine space of lifts has been chosen,
the set of lifts is in bijection with the set of lifts of the trivial map. The latter has the structure of the vector space of zero chains of the kernel of the map 
of complexes $\hom(F^c,G) \to \hom(F^c,H_\ast G)$. But this kernel is contractible because 
the natural transformation $G\to H_\ast G$ is both a weak equivalence and a fibration.
\end{proof}

\section{Spanning trees and co-trees \label{sec:spanning-tree}}

Let $X$ be a finite connected CW complex. 
In the following definitions, $\beta_j(X) = \dim_{\Bbb F} H_k(X)$ denotes the 
$j$th Betti number of $X$, and $X^{(j)}$ denotes the $j$-skeleton of $X$.

\begin{defn}[{\cite[defn.~1.2]{CCK15a}}] \label{defn:spanning-tree} A subcomplex $T\subset X$ is a {\it spanning tree}  
in degree $d$ if
\begin{itemize}
\item $H_d(T) = 0$,
\item $\beta_{d-1}(T) = \beta_{d-1}(X)$, and
\item $X^{(d-1)} \subset T\subset X^{(d)}$.
\end{itemize}
\end{defn}

\begin{ex} If $d=1$, then
the above coincides with the usual definition of a spanning tree for the $1$-skeleton of $X$, considered as a graph.
\end{ex}

\begin{defn}[{\cite[defn.~1.9]{CCK15b}}] \label{defn:spanning-cotree} A 
{\it spanning co-tree} in degree $d$ for $X$ is a 
  subcomplex $L \subset X$ such that
  \begin{enumerate}
  \item The inclusion $L \subset X$ induces an isomorphism of vector spaces
  \[
  H_{d}(L) @> \cong >> H_{d}(X)\, ;
  \]
  \item $\beta_{d-1}(L) = \beta_{d-1}(X)$;
  \item $X^{(d-1)} \subset L \subset X^{(d)}$.
  \end{enumerate}
\end{defn}

\begin{ex} A spanning co-tree in degree 0 is just a 0-cell of $X$.
\end{ex}

\begin{ex} If $H_d(X) = 0$ then a spanning tree in degree $d$ is the same thing
as a spanning co-tree in degree $d$.
\end{ex}

\begin{lem}[{\cite[lem.~2.3,2.4]{CCK15a}}, {\cite[lem.~2.2,2.3]{CCK15b}}] Let
$X$ be a connected finite CW complex. Then
$X$ has a spanning tree and spanning co-tree in every degree $d \ge 0$
and in each case there are only finitely many.
\end{lem}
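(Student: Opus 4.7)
The plan is to translate the defining properties into linear-algebraic conditions on subsets of $X_d$, after which existence and finiteness reduce to a standard basis-extraction statement. Any subcomplex $T$ with $X^{(d-1)} \subset T \subset X^{(d)}$ has the form $T = X^{(d-1)} \cup S$ for a unique $S \subset X_d$; write $\langle S\rangle \subset C_d(X)$ for its span. Since $T$ has no $(d+1)$-cells, $Z_d(T) = \langle S\rangle \cap Z_d(X)$, and since $C_{d-1}(T) = C_{d-1}(X^{(d)})$, the only quantity in degree $d-1$ that depends on $S$ is $B_{d-1}(T) = \partial\langle S\rangle \subseteq B_{d-1}(X)$. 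Unwinding the spanning-tree conditions $H_d(T) = 0$ and $\beta_{d-1}(T) = \beta_{d-1}(X)$ yields $\langle S\rangle \cap Z_d(X) = 0$ and $\partial\langle S\rangle = B_{d-1}(X)$, which together collapse to the clean statement that $\langle S\rangle$ be a complement to $Z_d(X)$ in $C_d(X)$; equivalently, that $S$ descend to a basis of $C_d(X)/Z_d(X)$.

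For co-trees I would proceed similarly, factoring the inclusion-induced map as $H_d(L) = Z_d(L) \hookrightarrow Z_d(X) \twoheadrightarrow H_d(X)$. Condition (1) becomes the pair $\langle S\rangle \cap B_d(X) = 0$ and $(\langle S\rangle \cap Z_d(X)) + B_d(X) = Z_d(X)$, and condition (2) becomes $\langle S\rangle + Z_d(X) = C_d(X)$. A short algebraic check shows these three statements are jointly equivalent to the single condition $\langle S\rangle \oplus B_d(X) = C_d(X)$, i.e., that $S$ descend to a basis of $C_d(X)/B_d(X)$. I expect this equivalence to be the main obstacle; everything else is bookkeeping.

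Given these reformulations, existence in both cases follows from the elementary fact that if a finite-dimensional vector space $V$ is spanned by a finite set $E$ and $W \subset V$ is a subspace, then one can extract a subset $S \subset E$ whose images form a basis of $V/W$, so that $\langle S\rangle \oplus W = V$. Applying this with $V = C_d(X)$, $E = X_d$, and $W \in \{Z_d(X),\, B_d(X)\}$ produces the required spanning trees and spanning co-trees respectively. Finiteness is then immediate, since each is specified by a subset of the finite set $X_d$.
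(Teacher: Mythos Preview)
Your argument is correct. The reformulations are clean: for spanning trees, $H_d(T)=0$ gives $\langle S\rangle\cap Z_d(X)=0$ and $\beta_{d-1}(T)=\beta_{d-1}(X)$ gives $\partial\langle S\rangle=B_{d-1}(X)$, together forcing $\langle S\rangle\oplus Z_d(X)=C_d(X)$; the co-tree equivalence $\langle S\rangle\oplus B_d(X)=C_d(X)$ checks out in both directions exactly as you indicate. The basis-extraction step and the finiteness conclusion are then immediate.

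As for comparison: the paper does not prove this lemma at all---it simply imports the statement from \cite{CCK15a} and \cite{CCK15b}. Your self-contained linear-algebraic argument is essentially the one given in those references (the characterizations of $d$-trees and $d$-co-trees as subsets of $X_d$ mapping to bases of $C_d(X)/Z_d(X)$ and $C_d(X)/B_d(X)$, respectively, appear there explicitly). So you have reconstructed the cited proof rather than found a genuinely different route, but that is entirely appropriate here since the present paper offers nothing to compare against.
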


\begin{defn} \label{defn:d-tree} Let $[p,q]$ be a gap. If $p \le d \le q$ then a {\it $d$-tree} for $X$ is subcomplex
$T\subset X$ such that
\begin{itemize}
\item if $d = p$, then $T$ is a spanning co-tree;
\item if $d > p$, then $T$ is a spanning tree.
\end{itemize}
\end{defn}

\section{Chain level hypercurrents \label{sec:hyper-construct}}

Assume $\gamma\: \Sigma \to \cal M_{p,q}$ is a good protocol.
Recall that 
\[
\gamma(b) = (W_p(b),\dots, W_q(b))\, \]
 where $W_j(b) \: X_j \to \Bbb R$.

\begin{defn}[Stratification of $\Sigma$]\label{defn:stratification}
Let $\Sigma_j$ be the set of $b\in B$ such that the function 
$W_j(b)$ is one-to-one.
Then $\Sigma_j \subset \Sigma$ is open and $\Sigma = \amalg_j \Sigma_j$ as sets.
 \end{defn}

\begin{defn}[Small simplices] \label{defn:small}
A singular simplex
$\sigma\: \Delta^j \to \Sigma$
is {\it small} if its image is contained in $\Sigma_j$ for some index $j$.
\end{defn}

Let \[
I_{\Sigma}
\] be the poset of small singular simplices in $\Sigma$. An object
of $I_\Sigma$ is a small singular simplex and a morphism is an inclusion of
faces. The $I_{\Sigma}$ is a poset in which every element has finite degree.

\begin{notation}
If $\sigma\: \Delta^j \to \Sigma$ is a (small) singular simplex we set 
\[
j_\sigma := j\, .
\]
\end{notation}

\begin{defn}[Tree functor] \label{defn:tree-functor} 
For a small simplex $\sigma\: \Delta^j \to \Sigma$, let 
\[
k_\sigma \in \Bbb N
\]
be the smallest integer such that $\sigma(\Delta^j) \subset \Sigma_{k_\sigma}$. 
By the greedy algorithm (cf.\ \cite[p.~9]{CCK15b}) there is a preferred 
$k_\sigma$-tree $T_\sigma$ associated with $s$.

The assignment $\sigma\mapsto \bar C(T_\sigma)$ defines a functor  
\[
\tau\: I_{\Sigma} \to \Ch\, .
\]
Call this the {\it tree functor}. (We remind the reader that $\bar C(T_\sigma)$ is the chain complex
with $\bar C_j(T_\sigma) = C_{p+j}(T^{(q)}_\sigma,T^{(p-1)}_\sigma)$.)
\end{defn}

\subsection{The pre-hypercurrent map}
Let $\Delta_\ast^j$ denote the simplicial chain complex of the standard $j$-simplex.
Consider the following three functors $I_{\Sigma} \to \Ch$: 
\begin{enumerate}
\item The constant functor $\chi$ given by
 \[
\sigma\mapsto  \bar C :=\bar C(X)\, .
 \]
\item The functor $\chi^c$
given by 
\[
\sigma\mapsto \bar C\otimes \Delta^{j_{\sigma}}_\ast\, .
\]
\item The tree functor $\tau$ as defined above.
\end{enumerate}
There are evident natural transformations  
\[
\mathfrak u\: \chi^c \to \chi  \qquad \text{ and }  \qquad  \mathfrak v\: \tau \to \chi\, .
\]

\begin{lem} \label{lem:conditions} The follow properties hold:
\begin{itemize}
\item The natural transformation $\mathfrak u$ is a cofibrant 
approximation to $\chi$.
\item The functor $\tau$ is positively acyclic. 
\item Both $\chi^c$ and $\tau$ are globally
bounded below by 0. 
\item The natural transformation $\mathfrak v\: \tau \to \chi$ 
induces an isomorphism on homology in degree 0
and is trivial in positive degrees.
\end{itemize}
\end{lem}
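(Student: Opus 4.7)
The plan is to verify the four bullets in sequence, reducing everything to the cellular chain complex of $T_\sigma$ and its relation to that of $X$.

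\emph{Items 1, 3, and the positive-degree part of item 4.} Objectwise boundedness below by $0$ will be immediate from the definitions: $\bar C$ sits in degrees $[0,q{-}p]$, $\Delta^{j_\sigma}_\ast$ in $[0,j_\sigma]$, and $\bar C(T_\sigma)_j = C_{p+j}(T_\sigma,X^{(p-1)})$ vanishes for $j<0$ since $T_\sigma^{(p-1)} = X^{(p-1)}$. Because $\Delta^{j_\sigma}_\ast$ is contractible with $H_0 = \Bbb F$, tensoring shows $\mathfrak u_\sigma$ is an objectwise quasi-isomorphism. For cofibrancy of $\chi^c$ I would identify the latching object: the proper faces of $\sigma$ in $I_\Sigma$ assemble into the face poset of $\partial\Delta^{j_\sigma}$, yielding $L_\sigma(\chi^c) = \bar C \otimes \partial\Delta^{j_\sigma}_\ast$. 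The latching map is then the degreewise injection $\bar C\otimes \partial\Delta^{j_\sigma}_\ast \hookrightarrow \bar C\otimes \Delta^{j_\sigma}_\ast$ whose cokernel is the $j_\sigma$-fold shift of $\bar C$, a bounded-below (hence cofibrant) object of $\Ch$. Triviality of $\mathfrak v$ in positive degrees will follow once $\tau$ is positively acyclic.

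\emph{Positive acyclicity of $\tau$ (item 2).} Fix a $d$-tree $T := T_\sigma$ with $p \le d \le q$. Because $T^{(p-1)} = X^{(p-1)}$, the relative complex $C_\ast(T,X^{(p-1)})$ agrees with $C_\ast(T)$ in degrees $\ge p$ and vanishes below, so I first reduce the claim to
\[
H_j(\bar C(T)) \;\cong\; H_{p+j}(T) \qquad (j \ge 1).
\]
If $d = p$ these groups vanish because $T\subset X^{(p)}$ has dimension $\le p$. If $d > p$, dimension handles $k > d$; the spanning-tree axiom $H_d(T) = 0$ handles $k = d$; and for $p < k \le d-2$ the containments $X^{(k)} \subset T \subset X^{(d-1)} \subset X$ give $H_k(T) = H_k(X^{(d-1)}) = H_k(X) = 0$ by the gap condition, since neither inclusion alters homology below degree $d-1$. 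The remaining case $k = d-1$ (non-vacuous only when $d \ge p+2$) reduces to the spanning-tree identity $\beta_{d-1}(T) = \beta_{d-1}(X)$, which equals zero by the gap condition.

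\emph{Isomorphism $\mathfrak v_{*,0}$ (item 4).} In degree zero I need to show that
\[
\mathfrak v_{*,0}\: C_p(T_\sigma)/\partial C_{p+1}(T_\sigma) \longrightarrow C_p(X)/\partial C_{p+1}(X),
\]
induced by inclusion, is an isomorphism. For $k_\sigma \ge p+2$ the two sides agree literally since $T_\sigma$ contains $X^{(p+1)}$. For $k_\sigma = p+1$, $T_\sigma$ has all $p$-cells of $X$ so the numerators agree, and the denominators agree via $\partial C_{p+1}(T_\sigma) = \partial C_{p+1}(X)$, which is the spanning-tree identity $\beta_p(T_\sigma) = \beta_p(X)$ rewritten. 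For $k_\sigma = p$, $T_\sigma$ is a spanning co-tree with $C_{p+1}(T_\sigma) = 0$, and I must check that $C_p(T_\sigma) \hookrightarrow C_p(X) \twoheadrightarrow C_p(X)/\partial C_{p+1}(X)$ is bijective. A dimension count from the co-tree axioms $\dim H_p(T_\sigma) = \dim H_p(X)$ and $\beta_{p-1}(T_\sigma) = \beta_{p-1}(X)$ makes the two sides equidimensional; injectivity follows because any $c \in C_p(T_\sigma) \cap \partial C_{p+1}(X)$ is automatically a $p$-cycle of $T_\sigma$ (as $C_{p-1}(T_\sigma) = C_{p-1}(X)$) that bounds in $X$, hence a class annihilated by the injection $H_p(T_\sigma) \hookrightarrow H_p(X)$ supplied by the co-tree hypothesis.

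\emph{Main obstacle.} The only step requiring genuine work is the co-tree case $k_\sigma = p$ of $\mathfrak v_{*,0}$, where one must extract from all three co-tree axioms simultaneously both the dimension equality and the injectivity. Every other piece is bookkeeping with the gap condition, the spanning-tree axiom, and Hovey's characterisation of cofibrations in $\Ch^{I_\Sigma}$.
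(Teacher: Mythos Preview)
Your proof is correct and matches the paper's approach on the one item it actually argues: cofibrancy of $\chi^c$ via the latching map $\bar C\otimes\partial\Delta^{j_\sigma}_\ast \hookrightarrow \bar C\otimes\Delta^{j_\sigma}_\ast$. The paper declares the other three bullets ``non-trivial only to check that $\chi^c$ is cofibrant'' and supplies no argument for them, whereas you give full case-by-case verifications of positive acyclicity of $\tau$ and of the degree-$0$ isomorphism for $\mathfrak v$ from the spanning (co-)tree axioms and the gap condition; this is additional detail rather than a different route.
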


\begin{proof} The only non-trivial thing to check is that $\chi^c$ is cofibrant.
This amounts to the statement that for $j \ge 0$ the inclusion
\[
\bar C\otimes \partial \Delta^j_\ast \to \bar C\otimes \Delta^j_\ast
\]  
is a cofibration of $\Ch$, where $\partial \Delta^j_\ast$ is the simplicial chain complex
of $\partial \Delta^j$. 
But this is clear.
 \end{proof}
 
 \begin{defn} \label{defn:axiom-hypercurrent} A {\it pre-hypercurrent map} is  a natural transformation  
\[
\mathfrak j\: \chi^c \to \tau
\]
such that the induced homomorphisms in zero dimensional homology
\[
({\mathfrak v} {\mathfrak j})_\ast, \mathfrak u_\ast \: H_0\chi^c \to H_0\chi
\]
coincide.
\end{defn}

\begin{prop} \label{prop:pre-hypercurrent} A pre-hypercurrent map 
$\mathfrak j$ exists and is unique up to contractible choice.
\end{prop}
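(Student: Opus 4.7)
The plan is to obtain both existence and uniqueness as a direct application of the acyclic models machinery of Proposition \ref{prop:lift} and Corollary \ref{cor:lift}, with $F = \chi^c$ and $G = \tau$. The hypotheses of those results are furnished by Lemma \ref{lem:conditions}: the functor $\chi^c$ is cofibrant (hence serves as its own cofibrant approximation, so one may take $F^c = F = \chi^c$) and bounded below by $0$, while $\tau$ is positively acyclic. Moreover, the natural transformation $\mathfrak v\: \tau \to \chi$ is, by the last bullet of Lemma \ref{lem:conditions}, an isomorphism on $H_0$.

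Next I would specify the homology-level datum $\alpha_\ast\: H_\ast \chi^c \to H_\ast \tau$ required by Proposition \ref{prop:lift}. Since $\tau$ is positively acyclic, $H_k\tau = 0$ for $k > 0$, so $\alpha_\ast$ is forced to be zero in positive degrees and is determined entirely by its degree-$0$ component. On objects the identification $H_\ast(\bar C\otimes \Delta^{j_\sigma}_\ast) \cong H_\ast(\bar C)$ exhibits $H_0\chi^c$ as the constant functor with value $H_0(\bar C) = H_0\chi$. I would then define
\[
\alpha_0 \,:=\, \mathfrak v_\ast^{-1} \circ \mathfrak u_\ast \: H_0 \chi^c \to H_0 \tau,
\]
using that $\mathfrak v_\ast$ is a natural isomorphism in degree $0$. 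By construction, a natural transformation $\mathfrak j\: \chi^c \to \tau$ induces $\alpha_\ast$ on homology if and only if $(\mathfrak v\mathfrak j)_\ast = \mathfrak u_\ast$ in degree $0$; that is, if and only if $\mathfrak j$ is a pre-hypercurrent map in the sense of Definition \ref{defn:axiom-hypercurrent}.

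With the data in place, Proposition \ref{prop:lift} produces a natural transformation $\mathfrak j\: \chi^c \to \tau$ inducing $\alpha_\ast$, which establishes existence of a pre-hypercurrent map. Uniqueness up to contractible choice is then immediate from Corollary \ref{cor:lift}, applied to the same $(F,G,\alpha_\ast)$. The only point that requires any care is the identification of $\alpha_0$ with the data demanded by Definition \ref{defn:axiom-hypercurrent}; everything else is a formal invocation of the model-categorical lifting statements. There is no serious obstacle: the work has already been done in setting up the projective model structure on $\Ch^{I_\Sigma}$ and in verifying the hypotheses of Lemma \ref{lem:conditions}.
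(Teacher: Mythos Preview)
Your proposal is correct and follows essentially the same route as the paper: rephrase the defining condition of a pre-hypercurrent map as the single homology-level datum $\alpha_\ast = \mathfrak v_\ast^{-1}\circ \mathfrak u_\ast$ (forced to vanish in positive degrees since $\tau$ is positively acyclic), then invoke Proposition~\ref{prop:lift} and Corollary~\ref{cor:lift} with the hypotheses supplied by Lemma~\ref{lem:conditions}. Your write-up is slightly more explicit than the paper's (e.g.\ spelling out that $\chi^c$ is already cofibrant so $F^c = F$, and checking the ``if and only if'' between inducing $\alpha_\ast$ and being a pre-hypercurrent map), but the argument is the same.
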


\begin{proof} Since $\mathfrak v_\ast$ is an isomorphism in degree 0, the homological condition
can be rephrased as the following equation of maps $H_\ast(\chi^c) \to H_\ast(\tau)$:
\[
{\mathfrak j}_\ast = \mathfrak v^{-1}_\ast \circ \mathfrak u_\ast\, .
\]
In particular, the value of ${\mathfrak j}_\ast$ is fixed.
The result then follows immediately  by application of
Proposition \ref{prop:lift} and Corollary \ref{cor:lift}, where the assumptions
are verified using Lemma \ref{lem:conditions}.
\end{proof}

\subsection{The hypercurrent map}
We construct the hypercurrent map as a colimit.  
Let $S_{\Sigma}$ be the full singular simplex category of $\Sigma$ 
(see Ex.~\ref{ex:simplex-cat}). 
By a subdivision argument, the inclusion of posets
\[
I_{\Sigma}\subset S_{\Sigma}
\]
induces a weak equivalence on realizations. In particular, the corresponding 
map of (singular) chain complexes
\[
I_\ast (\Sigma) \to S_\ast (\Sigma)
\]
is a chain homotopy equivalence.

The following is a special case
of a well-known result.

\begin{lem} \label{lem:constant} Let $c\: I_{\Sigma} \to \Ch$ be a constant functor with value $A_\ast$.
Then 
\[
\hocolim_{I_{\Sigma}} c \,\, \simeq\,\,  I_\ast(\Sigma) \otimes A_\ast\, ,
\]
\end{lem}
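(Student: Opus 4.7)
The plan is to identify both sides with the normalized chain complex of the nerve $BI_\Sigma$, tensored with $A_\ast$. First I would invoke the Bousfield--Kan formula for homotopy colimits in the projective model category $\Ch^{I_\Sigma}$: for the constant diagram $c$ with value $A_\ast$, the simplicial replacement (equivalently, the two-sided bar construction $B(\ast, I_\Sigma, c)$) has $k$-simplices given by the direct sum $\bigoplus_{\sigma_0 < \cdots < \sigma_k} A_\ast$ indexed by length-$k$ chains in $I_\Sigma$. Since $A_\ast$ is constant, it factors out of the sum, so the associated total complex is naturally chain equivalent to $N_\ast(BI_\Sigma) \otimes A_\ast$, where $N_\ast(BI_\Sigma)$ denotes the normalized chain complex of the nerve of $I_\Sigma$.

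Next, I would identify $N_\ast(BI_\Sigma)$ with $I_\ast(\Sigma)$ up to quasi-isomorphism. Since smallness passes to faces, the small singular simplices form a sub-simplicial set $\mathrm{Sing}^{\mathrm{sm}}(\Sigma) \subset \mathrm{Sing}(\Sigma)$ whose normalized chain complex is quasi-isomorphic to $I_\ast(\Sigma)$. The poset $I_\Sigma$ is the category of simplices of $\mathrm{Sing}^{\mathrm{sm}}(\Sigma)$ under face inclusion, and its nerve $BI_\Sigma$ is the classical barycentric subdivision $\mathrm{sd}\,\mathrm{Sing}^{\mathrm{sm}}(\Sigma)$. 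The last-vertex map $\mathrm{sd}(K) \to K$ is a weak equivalence for any simplicial set $K$, which provides the required quasi-isomorphism $N_\ast(BI_\Sigma) \xrightarrow{\ \sim\ } I_\ast(\Sigma)$.

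Tensoring this quasi-isomorphism with $A_\ast$ preserves it since we are working over a field, yielding $\hocolim_{I_\Sigma} c \simeq N_\ast(BI_\Sigma) \otimes A_\ast \simeq I_\ast(\Sigma) \otimes A_\ast$, as desired. The main technical subtlety I would need to pin down is the bookkeeping around degenerate simplices: one must decide whether $I_\Sigma$ contains all small simplices or only the non-degenerate ones, and reconcile this with the convention that $I_\ast(\Sigma)$ is freely generated by all small simplices. Either way, the discrepancy sits in a subcomplex spanned by degenerate chains which is null-homotopic, so the identification survives.
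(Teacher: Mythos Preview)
Your argument is correct, but it takes a more circuitous route than the paper's. The paper exploits the specific cofibrant replacement already set up in the preceding lemma: the functor $\sigma \mapsto \Delta^{j_\sigma}_\ast \otimes A_\ast$ is cofibrant in $\Ch^{I_\Sigma}$ and objectwise quasi-isomorphic to $c$, so its ordinary colimit computes $\hocolim_{I_\Sigma} c$. Since tensoring commutes with colimits, this reduces to identifying $\colim_{\sigma \in I_\Sigma} \Delta^{j_\sigma}_\ast$, and here the colimit relations simply collapse each $k$-dimensional face of $\Delta^{j_\sigma}$ to the small $k$-simplex it represents, yielding $I_\ast(\Sigma)$ essentially on the nose. (The paper phrases this last identification loosely as ``the singular chain complex of the nerve of $I_\Sigma$,'' but what is actually being computed is $I_\ast(\Sigma)$ itself.)

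Your Bousfield--Kan route lands instead on $N_\ast(BI_\Sigma)\otimes A_\ast$ and then requires the additional step of comparing $N_\ast(BI_\Sigma)$ to $I_\ast(\Sigma)$ via barycentric subdivision and the last-vertex map. That is a perfectly valid argument, and has the virtue of being model-independent, but it introduces exactly the degenerate-simplex and poset-versus-simplex-category bookkeeping you flag at the end. The paper's choice of cofibrant replacement is tailored to the simplex poset and sidesteps that bookkeeping entirely.
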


\begin{proof} The displayed homotopy colimit is just
\[
\colim_{\sigma\in I_{\Sigma}} (\Delta^{j_{\sigma}}_\ast \otimes A_\ast) \,\, \simeq\,\,  
(\colim_{\sigma\in I_{\Sigma}} \Delta^{j_{\sigma}}_\ast) \otimes A _\ast 
\]
where $\sigma$ ranges through elements of $I$. But 
\[
\colim_{\sigma\in I} \Delta^{j_{\sigma}}_\ast
\]
is just the singular chain complex of the nerve of
$I_{\Sigma}$.
\end{proof}

\begin{thm} \label{thm:exists-unique} A hypercurrent map 
\[
\cal J\: I_\ast(\Sigma) \otimes \bar C_\ast \to \bar C_\ast
\]
exists and is unique up to contractible choice.
\end{thm}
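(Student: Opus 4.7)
The plan is to reduce Theorem \ref{thm:exists-unique} to Proposition \ref{prop:pre-hypercurrent} by a colimit argument, exploiting the fact that $\chi$ is the constant functor with value $\bar C$. The key observation is the adjunction between $\colim_{I_\Sigma}$ and the constant functor $\Ch \to \Ch^{I_\Sigma}$, which identifies natural transformations $F \to \chi$ in $\Ch^{I_\Sigma}$ bijectively with chain maps $\colim F \to \bar C$ in $\Ch$. In particular, any natural transformation with target $\chi$ descends to a chain map whose target is the actual chain complex $\bar C$, not some larger homotopy colimit.

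The execution would proceed as follows. First I would invoke Proposition \ref{prop:pre-hypercurrent} to produce a pre-hypercurrent $\mathfrak{j}\: \chi^c \to \tau$. Post-composing with $\mathfrak{v}\: \tau \to \chi$ gives a natural transformation $\mathfrak{v}\mathfrak{j}\: \chi^c \to \chi$, which under the adjunction corresponds to a chain map out of $\colim_{I_\Sigma}\chi^c$ with target $\bar C$. It remains to identify this domain with $I_\ast(\Sigma)\otimes \bar C$: since $\chi^c(\sigma) = \bar C \otimes \Delta^{j_\sigma}_\ast$ and tensoring with a fixed chain complex commutes with colimits,
\[
\colim_{I_\Sigma}\chi^c \,\cong\, \bar C \otimes \colim_{\sigma\in I_\Sigma}\Delta^{j_\sigma}_\ast \,\cong\, I_\ast(\Sigma)\otimes \bar C,
\]
using exactly the calculation that appears in the proof of Lemma \ref{lem:constant}: the colimit of the simplicial chain complexes $\Delta^{j_\sigma}_\ast$ glued along face inclusions is, as a chain complex, freely generated by the small singular simplices of $\Sigma$. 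This produces the desired chain map $\cal J\: I_\ast(\Sigma)\otimes \bar C \to \bar C$.

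For the uniqueness clause, the assignment $\mathfrak{j} \mapsto \cal J$ factors as post-composition with $\mathfrak{v}$ followed by the adjunction bijection. Since $\mathfrak{v}$ is objectwise the inclusion of a chain subcomplex $\bar C(T_\sigma)\hookrightarrow \bar C$, post-composition is injective, so the affine space of valid $\cal J$'s is in bijection with the contractible space of pre-hypercurrents supplied by Proposition \ref{prop:pre-hypercurrent}. I do not anticipate a serious obstacle here: the genuine content of the theorem lies already in Proposition \ref{prop:pre-hypercurrent}, whose proof rests on the ``acyclic models'' machinery of \S\ref{sec:functor-cat-chain}, while Theorem \ref{thm:exists-unique} is a formal consequence of adjoint functoriality. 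The only bookkeeping point to verify is that $\colim_{I_\Sigma}\chi^c$ is computed as a strict (not merely homotopy) colimit, which is automatic because $\chi^c$ is cofibrant in $\Ch^{I_\Sigma}$.
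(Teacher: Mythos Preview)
Your proposal is correct and follows essentially the same route as the paper: both obtain $\cal J$ by passing the composite $\chi^c\xrightarrow{\mathfrak j}\tau\xrightarrow{\mathfrak v}\chi$ through a colimit over $I_\Sigma$, identifying $\colim_{I_\Sigma}\chi^c$ with $I_\ast(\Sigma)\otimes\bar C$ via the computation in Lemma~\ref{lem:constant}. Your explicit use of the $(\colim \dashv \text{constant})$ adjunction is a slightly cleaner way to land in $\bar C$ than the paper's identification of $\colim_{I_\Sigma}\chi$, and your observation that post-composition with the objectwise monomorphism $\mathfrak v$ is injective makes the transfer of ``unique up to contractible choice'' from $\mathfrak j$ to $\cal J$ more transparent than the paper's terse treatment.
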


\begin{proof} Apply the colimit operation to the composed natural transformations
\[
\chi^c @> \mathfrak j >> \tau @> v >> \chi
\]
(where the pre-hypercurrent $\mathfrak j$, which is unique up to contractible choice,
is obtained from Proposition
\ref{prop:pre-hypercurrent}).
This gives a morphism of chain complexes
\[
\colim_{I_{\Sigma}} \chi^c @>>> \colim_{I_{\Sigma}} \chi\, .
\]
By Lemma \ref{lem:constant} the source of this last map is identified with 
$I_\ast (\Sigma) \otimes \bar C_\ast$, whereas the target is identified with 
$\bar C_\ast$.
\end{proof}

\subsection{Variants} We describe variants of Theorem 
\ref{thm:exists-unique} 
which are useful for the proof of the Quantization Theorem.

\begin{defn} \label{defn:posets} Let $\cal P$ be a partially ordered set. A subset $R\subset \cal P$ is {\it closed}
if for all $x\in R$ and $y\in \cal P$, then $y < x$ implies $y \in T$.
\end{defn}
Clearly, a closed subset of $\cal P$ is again a poset. 

In what follows, we set
$\cal P := I_\Sigma$, and let 
$
R \subset I_{\Sigma}
$ 
be any closed subset.
Clearly, the model category machinery 
 applies equally to the functor category $\Ch^R$.

Applying Definition \ref{defn:axiom-hypercurrent}
 to the restrictions to $T$ of the functors
$\chi$ and $\tau$, 
we obtain a restricted pre-hypercurrent map
\[
\frak j_{|R}\: \chi^c \to \tau
\]
which is unique up to contractible choice. In particular,  
we obtain the following variant of Theorem \ref{thm:exists-unique}:

\begin{cor} \label{cor:exist-unique-variant} Restriction to a closed subset
$R \subset I_\Sigma$ yields a restricted hypercurrent chain map
\[
\cal J_{|R}\: R_\ast\otimes \bar C \to \bar C
\]
which is unique up to contractible choice, where $R_\ast$ is
 the chain complex over $\Bbb F$
is freely generated in degree $j$ by the objects 
$\sigma\: \Delta^j \to \Sigma$ of $R$, with
boundary operator defined by taking the alternating sum of the
 codimension one faces of  $\Delta^j$.
\end{cor}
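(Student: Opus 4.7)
The argument is a direct adaptation of the proof of Theorem \ref{thm:exists-unique}, with $I_\Sigma$ replaced by $R$ throughout; the essential task is to confirm that the model-theoretic and acyclic-model inputs survive the restriction.

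The key observation is that because $R \subset I_\Sigma$ is closed under the face relation, for any functor $F\: I_\Sigma \to \Ch$ and any object $\sigma \in R$ the latching object $L_\sigma(F|_R) = \colim_{s < \sigma,\, s \in R} F(s)$ computed in $\Ch^R$ agrees with $L_\sigma F$ computed in $\Ch^{I_\Sigma}$. Consequently, the restrictions $\chi|_R$, $\chi^c|_R$, $\tau|_R$ retain all of the properties enumerated in Lemma \ref{lem:conditions}: $\mathfrak u|_R$ is a cofibrant approximation in $\Ch^R$, $\tau|_R$ is positively acyclic, and $\mathfrak v|_R$ induces an isomorphism in degree-zero homology. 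With these inputs, Proposition \ref{prop:lift} and Corollary \ref{cor:lift} apply verbatim to produce a restricted pre-hypercurrent $\mathfrak j_{|R}\: \chi^c|_R \to \tau|_R$, unique up to contractible choice, by the same argument as Proposition \ref{prop:pre-hypercurrent}.

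Finally, apply the colimit functor to the composite $\chi^c|_R \to \tau|_R \to \chi|_R$ and post-compose with the canonical map $\colim_R \chi|_R \to \bar C$ determined by the identity cone on the constant functor $\chi|_R$. This yields a chain map $\colim_R \chi^c|_R \to \bar C$, and the argument of Lemma \ref{lem:constant} restricted to $R$ identifies the source with $R_\ast \otimes \bar C$ (using that $\chi^c|_R$ is cofibrant, so $\colim = \hocolim$, and that $\colim_{\sigma \in R} \Delta^{j_\sigma}_\ast = R_\ast$ by the defining description of $R_\ast$ in the statement of the corollary). The only nontrivial step is the latching-object observation; the rest is formal, since the model structure on $\Ch^R$ and the acyclic-models machinery depend only on $R$ being a direct category with the appropriate functor-level inputs, all of which $R$ inherits from $I_\Sigma$.
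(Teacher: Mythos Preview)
Your proof is correct and follows essentially the same approach as the paper: the paper simply asserts that the model category machinery applies equally to $\Ch^R$, obtains the restricted pre-hypercurrent $\mathfrak j_{|R}$, and cites Theorem~\ref{thm:exists-unique}. You have supplied the explicit verification (via the latching-object observation for closed $R$) that the paper leaves implicit, and your handling of the target identification via the identity cone is a careful touch the paper elides.
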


\subsection{A derived category approach}
An inspection of the above recipe
yields a direct construction a pre-hypercurrent map 
in the homotopy category of functors $I \to \Ch$. The idea is that
the canonical chain of natural transformations
\[
 \chi @>>> H_\ast(\tau) @<{}_\sim << \tau @>\frak v >> \chi
 \]
defines an endomorphism $[\cal J]\: \chi\to \chi$ in the homotopy category. It is trivial to check that
 the pre-hypercurrent map $\cal J$
descends to this endomorphism.

\subsection{The small cell approach} 
We briefly describe an alternative construction of a hypercurrent map when $\Sigma$ has the structure of finite CW complex. 
We say that a good protocol $\Sigma \to  \breve{\cal M}_{p,q}$ is {\it cellularly small}
if each characteristic map
$\chi\: D^j \to  \Sigma$ has image in some $\Sigma(k)$. 
This condition can usually be arranged: if $\Sigma$ is a  regular CW complex equipped with a good protocol, then there is always a subdivision of $\Sigma$ such that the protocol becomes cellularly small.  To avoid technicalities,
we will restrict ourselves to the case
when $\Sigma$ is regular, which for us is the case of interest.

Recall that a regular CW complex $Y$ is a CW complex if every characteristic map
$D^j \to Y$ is an embedding. Let $\cal P_Y$ be the poset given by the set of cells of $Y$
partially
ordered as follows: $e\le e'$ iff and only if $e$ is contained in the closure of $e'$. The realization $|\cal P_Y|$ is the barycentric subdivision of $Y$ and there is a cellular map
 $Y  @>>>	 |\cal P_Y|$ which is also a homeomorphism \cite[thms.~1.7, 2.1]{Lundell-Weingram}

Assume that $\Sigma$ is a  
regular CW complex with respect to a cellularly small protocol
$\Sigma \to  \breve{\cal M}_{p,q}$. By replacing $I_{\Sigma}$ with
$\cal P_{\Sigma}$ in the construction of the 
pre-hypercurrent map, we obtain a natural transformation
\[
\mathfrak j^{\text{cw}} \: \chi^c \to \chi
\]
just as above, but where now the source category for $\chi$ and
$\chi^c$ is the poset $\cal P_{\Sigma}$.
Taking colimits we obtain the cellular hypercurrent map
\begin{equation} \label{eqn:cellular-hypercurrent}
{\cal J}^{\text{cw}}\: C_\ast(\Sigma) \otimes \bar C_\ast \to \bar C_\ast\, ,
\end{equation}
in which $C_\ast(\Sigma)$ is the  cellular chain complex of $\Sigma$. 
When considered  in the derived category, the maps ${\cal J}^{\text{cw}}$ and $\cal J$ coincide. In particular they agree on the level of homology.

\section{Homotopy type of the space of good weights}\label{sec:good-weights}

For spaces $A$ and $B$,
 recall that the topological join $A\ast B$ is defined to be the quotient space
 of $A\times B \times [0,1]$ in which for all points
 $a,a' \in A$ and $(b,b') \in B$, the point $(a,b,0)$ is identified with $(a,b',0)$ and
 the point $(a,b,1)$ is identified with $(a',b,1)$. It is often convenient to identify
 the equivalence class of $(a,b,t)$ with the affine combination $ta + (1-t)b$.
 Another description of $A\ast B$  is 
 \[
 (CA \times B) \cup (A \times CB)
 \]
 where $CA$ is the cone on $A$ and the union is amalgamated over $CA \times CB$.

The  homotopy type of $\breve {\cal M}_{p,q}$ 
depends only on the number of cells in each dimension $\cal J$ where $j \in [p,q]$. In particular, it is independent of the choices of the attaching maps of the cells of $X$.
Recall that $X_j$ denotes the number of $j$-cells of $X$. Let
\[
E(X_j,\Bbb R)
\]
be the space of one-to-one functions $X_j \to \Bbb R$.

\begin{prop}\label{prop:good-type} 
There a preferred homotopy equivalence
\[
\breve {\cal M}_{p,q} \,\, \simeq\,\,   
E(X_p,\Bbb R) \ast E(X_{p+1},\Bbb R) \ast \cdots \ast E(X_q,\Bbb R) \, .
\]
\end{prop}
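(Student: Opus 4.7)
My plan is to induct on $q-p$, exploiting the double mapping cylinder description of the topological join reviewed immediately before the statement. For each $j\in [p,q]$, let $U_j \subset \cal M_{p,q}$ denote the open subspace where $W_j\: X_j \to \Bbb R$ is one-to-one. The product decomposition $\cal M_{p,q} = \prod_{k=p}^q \Bbb R^{X_k}$ exhibits $U_j$ as $\prod_{k\ne j}\Bbb R^{X_k} \times E(X_j,\Bbb R)$, so projection onto the $E(X_j,\Bbb R)$ factor is a preferred deformation retraction. By definition $\breve{\cal M}_{p,q} = \bigcup_{j=p}^q U_j$, and the base case $q=p$ is then immediate since $\breve{\cal M}_{p,p} = U_p \simeq E(X_p,\Bbb R)$.

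For the inductive step, write $\breve{\cal M}_{p,q} = U \cup V$ with $U := U_q$ and $V := \bigcup_{j=p}^{q-1} U_j$. The factorization $\cal M_{p,q} = \cal M_{p,q-1} \times \Bbb R^{X_q}$ yields the identifications
\[
U \;\simeq\; E(X_q,\Bbb R),\qquad V = \breve{\cal M}_{p,q-1}\times \Bbb R^{X_q} \;\simeq\; \breve{\cal M}_{p,q-1},
\]
together with $U\cap V = \breve{\cal M}_{p,q-1}\times E(X_q,\Bbb R)$. Under these equivalences, the inclusions $U\cap V \hookrightarrow U$ and $U\cap V \hookrightarrow V$ are carried to the two coordinate projections out of the product $\breve{\cal M}_{p,q-1}\times E(X_q,\Bbb R)$.

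Since $\breve{\cal M}_{p,q}$ is a paracompact metric space covered by the two open sets $U$ and $V$, the strict pushout $U\cup_{U\cap V}V$ computes the homotopy pushout; this is the standard excisive-cover fact. The resulting homotopy pushout along a pair of coordinate projections from a product is, by definition, the double mapping cylinder $\breve{\cal M}_{p,q-1} \ast E(X_q,\Bbb R)$. Combining this with the inductive hypothesis and the associativity (up to homotopy) of the join yields the claimed equivalence. The one substantive point requiring verification is that the open-cover strict pushout models the homotopy pushout; everything else reduces to tracing the product structure of $\cal M_{p,q}$ through the various projections.
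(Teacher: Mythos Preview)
Your proof is correct and follows essentially the same route as the paper's. Both arguments induct on $q-p$ by splitting off the last factor and using the product decomposition $\cal M_{p,q}=\cal M_{p,q-1}\times\Bbb R^{X_q}$; the paper identifies the resulting pieces directly with the $(CA\times B)\cup(A\times CB)$ model of the join via the pair equivalence $(F(X_j,\Bbb R),E(X_j,\Bbb R))\simeq(CE(X_j,\Bbb R),E(X_j,\Bbb R))$, whereas you phrase the same identification in homotopy-pushout language---these are two packagings of the same computation, and your version in fact supplies the inductive step the paper leaves to the reader.
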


\begin{rem}\label{rem:good-type} In particular, if some $X_j$ has cardinality $\le 1$, then 
$\breve {\cal M}_{p,q}$ is contractible. 
\end{rem}

\begin{proof}[Proof of Proposition \ref{prop:good-type}]  If $q-p = 0$, 
then $\breve {\cal M}_{p,q}$ is just
$E(X_p,\Bbb R)$. 

Assume next that $q-p = 1$. Then $\breve {\cal M}_{p,q}$ decomposes as 
\begin{equation} \label{eqn:join} 
E(X_p,\Bbb R) \times F(X_q,\Bbb R) \,\, \cup\,\,  F(X_p,\Bbb R) \times E(X_q,\Bbb R)\, ,
\end{equation}
where the union is amalgamated over $E(X_p,\Bbb R) \times E(X_q,\Bbb R)$ and
$F(X,\Bbb R)$ denotes the function space of maps $X\to \Bbb R$.  As the latter is
a convex space, there is a preferred homotopy equivalence of pairs
\[
(F(X,\Bbb R),E(X,\Bbb R)) \simeq (C(E(X,\Bbb R)),E(X,\Bbb R))\, .
\]
Hence, the union \eqref{eqn:join} is identified with the join
$E(X_p,\Bbb R) \ast E(X_q,\Bbb R)$. This completes the case $q-p = 1$.

The general case  now follows from a straightforward induction on $q-p$ which we leave to the reader.
\end{proof}

\begin{cor} If $X$ has at most one $j$-cell for some
$j \in [p,q]$, then the topological hypercurrent map
$\cal J\: H_{p-q}(\Sigma) \otimes H_p(X) \to H_q(X)$ is trivial
for any  good protocol $\Sigma$.
\end{cor}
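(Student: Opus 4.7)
The plan is to reduce the statement to the Functoriality and Homotopy Invariance clauses of Theorem \ref{bigthm:properties} by exploiting the fact, already flagged in Remark \ref{rem:good-type}, that the hypothesis forces $\breve{\cal M}_{p,q}$ itself to be contractible.

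First, I would unpack why the target of the protocol is contractible. Suppose $|X_j|\le 1$ for some $j\in[p,q]$. Then $E(X_j,\Bbb R)$ is the space of one-to-one functions on a set of cardinality $\le 1$: either a one-point space (when $X_j=\emptyset$, the empty function being vacuously injective) or a copy of $\Bbb R$ (when $|X_j|=1$). Either way $E(X_j,\Bbb R)$ is contractible. Since the topological join $A\ast B$ is contractible whenever one of the factors is contractible (indeed, $A\ast \mathrm{pt}$ is the cone on $A$), Proposition \ref{prop:good-type} then identifies $\breve{\cal M}_{p,q}$ with an iterated join that contains a contractible factor, so $\breve{\cal M}_{p,q}$ itself is contractible.

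Next, I would apply the axiomatic properties of the hypercurrent. Since $\breve{\cal M}_{p,q}$ is contractible, the good protocol $\gamma\colon \Sigma\to \breve{\cal M}_{p,q}$ is homotopic, through good protocols, to a constant map $c\colon \Sigma\to \breve{\cal M}_{p,q}$ (the homotopy is a straight-line homotopy after choosing a contraction of $\breve{\cal M}_{p,q}$). By Homotopy Invariance (Theorem \ref{bigthm:properties}(2)), $\cal J_{p,q}(\gamma)=\cal J_{p,q}(c)$. The constant protocol factors as $\Sigma \to \mathrm{pt} \hookrightarrow \breve{\cal M}_{p,q}$, so by Functoriality (Theorem \ref{bigthm:properties}(1)) the homomorphism $\cal J_{p,q}(c)$ is the pullback, along the collapse $\Sigma\to \mathrm{pt}$, of the hypercurrent homomorphism associated to the one-point protocol. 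Assuming the intended range $q>p$, we have $H_{q-p}(\mathrm{pt})=0$, so the domain of the point-protocol hypercurrent is zero, and $\cal J_{p,q}(\gamma)=0$ follows.

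There is essentially no hard step here: the proof is a formal consequence of Proposition \ref{prop:good-type} combined with clauses (1) and (2) of Theorem \ref{bigthm:properties}. The only thing worth flagging is the edge case $q=p$: if one reads the statement at $q=p$, the Initial Condition forces $\cal J_{p,p}$ to be the identity on $H_p(X)$, which is not literally zero unless $H_p(X)=0$ (as happens, e.g., when $|X_p|=0$, since then $\bar C_0=0$). In the main case of interest, $q>p$, the argument above is complete.
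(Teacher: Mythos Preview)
Your proof is correct and follows essentially the same approach as the paper: both use Proposition \ref{prop:good-type} to conclude that $\breve{\cal M}_{p,q}$ is contractible, then invoke functoriality. The only cosmetic difference is that the paper argues via the universal case $\Sigma=\breve{\cal M}_{p,q}$ (contractible, hence $H_{q-p}=0$), whereas you homotope $\gamma$ to a constant and factor through a point; these are equivalent. Your remark on the $q=p$ edge case is a fair caveat that the paper leaves implicit.
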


\begin{proof} If we take
$\Sigma = \breve {\cal M}_{p,q}$, then $\Sigma$ is contractible by Proposition \ref{prop:good-type} (since at least
one of the factors $E(X_j,\Bbb R)$ is contractible for some $j$).   This shows gives the result in the universal case. The general case follows by functoriality.
\end{proof}

Let $n_j$ be the cardinality $X_j$  and set
\[
c_{p,q} := \prod_{j = p}^{q} (n_j!-1)
\]

\begin{cor} Assume $n_j \ge 2$ for $j \in [p,q]$. then
there is a preferred homotopy equivalence  
\begin{equation} \label{eqn:decomposition}
\breve {\cal M}_{p,q} \,\, \simeq \,\, \bigvee_{c_{p,q}} S^{q-p} \, ,
\end{equation}
i.e., $\breve {\cal M}_{p,q}$ has the homotopy type of a $c_{p,q}$-fold wedge of $(q-p)$-spheres.
\end{cor}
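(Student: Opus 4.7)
The plan is to build on Proposition \ref{prop:good-type}, which already identifies $\breve{\cal M}_{p,q}$ with the iterated topological join
\[
E(X_p,\Bbb R) \ast E(X_{p+1},\Bbb R) \ast \cdots \ast E(X_q,\Bbb R).
\]
So the task reduces to computing the homotopy type of a join of configuration-type spaces on real lines, once we know the cardinalities $n_j = |X_j|$.

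The first step is to identify each factor. The space $E(X_j,\Bbb R)$ of injective functions from a finite set of size $n_j$ into $\Bbb R$ is an open subset of $\Bbb R^{X_j}$ whose path components are the open convex chambers indexed by the total orderings of $X_j$. Consequently each $E(X_j,\Bbb R)$ deformation retracts to a discrete set $A_j$ of cardinality $n_j!$ in a canonical way (the ordering barycenters, say), and the retraction may be chosen functorially so as to give a preferred homotopy equivalence of joins
\[
\breve{\cal M}_{p,q} \,\simeq\, A_p \ast A_{p+1} \ast \cdots \ast A_q.
\]

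The second step is to identify the join of discrete sets. I will use the Künneth formula for joins, $\tilde H_n(A\ast B) \cong \bigoplus_{i+j=n-1}\tilde H_i(A)\otimes \tilde H_j(B)$, which applies over the field $\Bbb F$ since all spaces in sight are CW. Each $A_j$ has reduced homology concentrated in degree $0$ of rank $n_j!-1$; an easy induction on $q-p$ then shows that
\[
\tilde H_\ast(A_p \ast \cdots \ast A_q)
\]
is concentrated in degree $q-p$ and has rank $\prod_{j=p}^{q}(n_j!-1) = c_{p,q}$. Moreover, the join $A_p \ast \cdots \ast A_q$ is naturally a finite CW complex of dimension $q-p$, and when $q-p \ge 2$ it is simply connected (joins of non-empty spaces with at least three factors are simply connected). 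For $q-p = 1$ the join is a complete bipartite graph $K_{n_p!,n_q!}$, which is visibly a wedge of $(n_p!-1)(n_q!-1)$ circles via an explicit choice of spanning tree. For $q-p = 0$ the space is already a discrete set of $n_p!$ points, i.e., a wedge of $n_p!-1$ copies of $S^0$.

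Combining these observations: for $q-p \ge 2$ the space $A_p\ast\cdots\ast A_q$ is a simply connected finite CW complex whose homology is free and concentrated in a single degree, so by standard obstruction theory (build a map from a wedge of spheres realizing the generators of $H_{q-p}$, then check it is a homology, hence homotopy, equivalence) it is homotopy equivalent to $\bigvee_{c_{p,q}} S^{q-p}$. The main technical care in the argument is ensuring that the equivalences chosen at each stage can be assembled into a single preferred one; this is the only step that is not entirely routine, but it is handled by making the deformation retractions $E(X_j,\Bbb R)\to A_j$ canonical (e.g., via normalized barycenters of ordering chambers) and then invoking the fact that the join construction preserves homotopy equivalences on CW pairs.
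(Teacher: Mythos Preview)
Your argument is correct. The approach, however, differs from the paper's: after reducing each factor $E(X_j,\Bbb R)$ to a discrete set of cardinality $n_j!$ (which both proofs do), the paper invokes the standard identification $A\ast B \simeq \Sigma(A\wedge B)$ for well-pointed spaces together with the distributivity of smash products over wedges; since each discrete set becomes $\bigvee_{n_j!-1} S^0$ after a basepoint choice, the iterated join is immediately seen to be a wedge of the correct number of $(q-p)$-spheres, uniformly in $q-p$. Your route instead computes the reduced homology via the K\"unneth formula for joins and then appeals to the classification of simply connected CW complexes with homology concentrated in a single degree, handling $q-p=0$ and $q-p=1$ by separate ad hoc arguments. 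Both methods are valid; the paper's is shorter and avoids the case split as well as the appeal to Whitehead/obstruction theory, while yours has the minor advantage of not needing to fuss over well-pointedness when passing from join to suspended smash.
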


\begin{proof} The proof uses the following elementary fact: let
$A$ and $B$ be well-pointed based spaces. Then the join $A\ast B$  has the homotopy type
of $\Sigma A \smsh B$, that is, the suspension of the smash product.

Let $L(X_j)$ denote the set of linear orderings on the set $X_j$.
Then the obvious map $E(X_j,\Bbb R) \to L(X_j)$ is a deformation retraction.
In particular, $E(X_j,\Bbb R)$ has the homotopy type of a finite set of cardinality
 $n_j!$. Assuming that $n_j \ge 2$, a choice of basepoint in $E(X_j,\Bbb R)$ 
gives an identification up to homotopy
\[
E(X_j,\Bbb R) \simeq \bigvee_{n_j!-1} S^0\, ,
\]
since the right side has cardinality $n_j!$.
The result then follows from Proposition \ref{prop:good-type}, using the fact
that smash products distribute over wedges.
\end{proof}

\subsection{Explicit description of the summands}
We describe  maps 
\[
S^{q-p} \to \breve {\cal M}_{p,q}
\] 
that correspond to
 the summands in the decomposition \eqref{eqn:decomposition}.
For this we fix a basepoint $\ast_j \in E(X_j,\Bbb R)$ for each $j\in [p,q]$. 
Note that the automorphism (permutation)
group of $X_j$ acts freely on on $E(X_j,\Bbb R)$.
For each non-trivial automorphism $\sigma_j \: X_j \to X_j$, one 
has a map 
\[
\alpha_{\sigma_j} \: S^0 \to E(X_j,\Bbb R)
\] 
in which $\alpha_{\sigma_j}(-1)=\sigma\cdot \ast_j$ and $\alpha_{\sigma_j}(+1) = \ast_j$. 
For each $\cal J$ we choose such a $\sigma_j$ and join the maps $\alpha_\sigma$ together. 
This produces a map
\[
S^{q-p} \cong S^0 \ast  \cdots \ast S^0 @> \alpha_{\sigma_p} \ast \cdots \ast
\alpha_{\sigma_q}>> E(X_p,\Bbb R) \ast \cdots \ast E(X_q,\Bbb R) \simeq 
\breve {\cal M}_{p,q}\, 
\]
yielding a summand in the decomposition. By varying the choices of the automorphisms, every summand in the decomposition arises
in this way.

\section{Proof of Theorems \ref{bigthm:properties} and  \ref{bigthm:examples}} \label{sec:proof-props-ex}

Let $X = S^q$ with the CW structure having two $j$-cells $e^j_\pm$ for $0 \le j \le q$.
The attaching maps $\partial e^j_\pm \to S^{j-1} = X^{(j-1)}$ are given by the identity map, where $0 \le j \le q$. Similarly, $Y = D^q \vee S^q$, where the
CW structure is such that $Y^{(q-1)} = X^{(q-1)} = S^{q-1}$
and the two $q$-cells $e^q_\pm$ 
are attached respectively by the identity and the constant map. Note that
$X$ and $Y$ have the same number of cells in each degree and have the same homotopy type.

\begin{proof}[Proof of Theorem \ref{bigthm:examples}]
Consider first the case $q=1$. For convenience, we change notation
and denote the weights
by $E = (E_-,E_+)$ and $W = (W_-,W_+)$, where 
$E_\pm$ is a real number associated with $e^0_\pm$ and 
$W_\pm$ is a real number associated with $e^1_\pm$.
A spanning $0$-tree is then a $0$-cell and a spanning 1-tree is 
a $1$-cell. The greedy algorithm says in this case that 
 we select the $0$-tree $e^0_+$ if $E_+ < E_-$ and $e^0_-$ if $E_+ > E_-$.
 Similarly we select the $1$-tree $e^1_+$ if $W_+ < W_-$ and we select 
 the $1$-tree $e^1_-$ 
 if $W_- < W_+$.  Suppose we can find a map 
 $f\:S^1\to \breve {\cal M}_{0,1}$, with $f(e^{i\theta}) = (E_-(\theta),E_+(\theta),W_-(\theta),W_+(\theta))$, having the properties indicated by 
 Fig.~\ref{fig:square}, where we are considering $S^1$ as the boundary of the square using the homeomorphism
 $(x,y) \mapsto \tfrac{(x,y)}{\|(x,y)\|}$ for $(x,y)$ lying
 in the boundary of the square $[-1,1]^{\times 2}$.
\begin{figure}
\centering
 \begin{tikzpicture}[thick, scale=.75]
 \draw[red, line width=2] (0,0) -- (4,0);
  \draw[green!40!black,line width=2] (4,0) --(4,4);
  \draw[red, line width=2] (4,4) -- (0,4);
  \draw[green!40!black,line width=2] (0,4) -- (0,0);
 \node at (2.3,4.3) {\tiny $W_+ < W_-$};
 \node at (2.3,-0.3) {\tiny $ W_+ > W_- $};
 \node at (4.9,2) {\tiny $ E_+ > E_-$};
 \node at (-0.9,2) {\tiny $ E_+ < E_-$};
 \end{tikzpicture}
 \caption{}\label{fig:square}
 \end{figure}
 The contribution to the current on the green edges of the square is trivial, since
 along each such edge there is a preferred $0$-tree.
 The top edge of the square gives a contribution of $e^1_+$ to the current and the bottom
 edge contributes $e^1_-$. It follows that the current generated by
 the square is the 1-cycle $e^1_+ + e^1_-$.
Hence $\cal J^{\text{cw}} \: H_1(S^1) \otimes
H_0(S^1) \to H_1(S^1)$ is non-trivial if we can find such a map $f$.
A concrete example is given by $f(e^{i\theta}) = ((0,\cos \theta),(0,\sin\theta))$.
This takes care of the case $q=1$.

Next consider the case $q=2$. We identify $S^2$ with the boundary of 
the cube $[-1,1]^{\times 3}$
and suppose we can find a map $f\: S^2 \to \breve{\cal M}_{0,2}$ with the
 properties appearing in Fig.~\ref{fig:cube}.
\begin{figure}
\centering
\begin{tikzpicture}[thick,scale=2.8] 
    \coordinate (A1) at (0, 0);
    \coordinate (A2) at (0, 1);
    \coordinate (A3) at (1, 1);
    \coordinate (A4) at (1, 0);
    \coordinate (B1) at (0.3, 0.3);
    \coordinate (B2) at (0.3, 1.3);
    \coordinate (B3) at (1.3, 1.3);
    \coordinate (B4) at (1.3, 0.3);

    \draw[very thick] (A1) -- (A2);
    \draw[very thick] (A2) -- (A3);
    \draw[very thick] (A3) -- (A4);
    \draw[very thick] (A4) -- (A1);

    \draw[dashed] (A1) -- (B1);
    \draw[dashed] (B1) -- (B2);
    \draw[very thick] (A2) -- (B2);
    \draw[very thick] (B2) -- (B3);
    \draw[very thick] (A3) -- (B3);
    \draw[very thick] (A4) -- (B4);
    \draw[very thick] (B4) -- (B3);
    \draw[dashed] (B1) -- (B4);

    \draw[fill=green,opacity=0.5] (A1) -- (B1) -- (B4) -- (A4);
    \draw[fill=black!20,opacity=0.8] (A1) -- (A2) -- (A3) -- (A4);
    \draw[fill=red,opacity=0.5] (A1) -- (A2) -- (B2) -- (B1);
    \draw[fill=black,opacity=0.5, ] (B1) -- (B2) -- (B3) -- (B4);
    \draw[fill=red,opacity=0.5] (A3) -- (B3) -- (B4) -- (A4);
    \draw[fill=green,opacity=0.5] (A2) -- (B2) -- (B3) -- (A3);
\node [rotate=90,scale = .65]  at (1.2,.7) {$W_{1+} > W_{1-}$ };
\node [rotate=90,scale = .65] at (0.15,.65) {$W_{1-} > W_{1+}$ };
\node[scale = .65] at (.75,1.15) {$W_{2+} > W_{2-}$ };
\node[scale = .65] at (.65,.15) {$W_{2-} > W_{2+}$ };
\node[scale=.65, opacity=.4] at (.73,.75) {$W_{0-} > W_{0+}$ };
\node[scale=.65] at (.58,.55) {$W_{0+} > W_{0-}$ };
\end{tikzpicture}
\caption{} \label{fig:cube}
\end{figure}
Then an unravelling of the  definition of $\cal J^{\text{cw}}$ shows
 the contribution to the hypercurrent is trivial
along all faces of the cube except the top face, which contributes $e^2_-$ and the bottom face, which contributes $-e^2_+$. Hence the value of the hypercurrent on the 
entire boundary of the cube is given by the cycle $e^2_- - e^2_+$, which generates 
homology of $X = S^2$ in degree 2.  An example of $f\: S^2 \to 
\breve{\cal M}_{0,2}$ with the above properties is
\[
f(x_0,x_1,x_2) = (0,x_0,0,x_1,0,x_2)\, ,
\]
where $\sum x_i^2 = 1$, and $W_{i+} = x_i,W_{i-} = 0$. This shows that the hypercurrent map
$\cal J^{\text{cw}}\: H_2(S^2) \otimes H_0(S^2) \to H_2(S^2)$ is non-trivial in this case.

The case $q > 2$ is completely analogous: we take the boundary of 
a $q$-dimensional cube and label its $q+1$ pairs of opposing faces by
$W_{j-} < W_{j+}$ and $W_{j+} < W_{j-}$ for $0\le j \le q$. The map
 $f\: S^q \to \breve {\cal M}_{0,q}$ is given by 
\begin{equation} \label{eqn:ident-weights-sphere}
f(x_0,\dots,x_q) = (0,x_0,0,x_1,\dots,0,x_q)\, .
\end{equation}
Again, the hypercurrent map will be non-trivial. 

We remark that
$f$ in the above example is a homotopy equivalence.

Consider now the case of $Y = D^q \vee S^q$ with the CW structure having
the same $(q-1)$-skeleton as $X$, i.e., $S^{q-1}$. There are two 
two $q$-cells $e^q_+$ and $e^q_-$, where
$e^q_-$ is attached via the identity map and $e^q_+$ is attached using the constant
map whose value is a 0-cell. Then there is exactly one $q$-tree $T$ given by
the summand $D^q  \subset Y$. It follows that the hypercurrent is necessarily trivial since it factors through the homology of $D^q$.
\end{proof}

\begin{proof}[Proof of Theorem \ref{bigthm:properties}]
Statements (1)-(3) of Theorem \ref{bigthm:properties} follow directly from the construction of the pre-hypercurrent map.  It will therefore suffice to extablish statement (4).
When $p = 0$ an example satisfying (4) is given by Theorem \ref{bigthm:examples}. Therefore,
it will suffice to consider the case when $p > 0$.

Let $Z = X/X^{(p-1)}$, where $X = S^q$ is provided with the CW structure of Theorem \ref{bigthm:examples}. In addition to a unique 0-cell,
 $Z$ has two cells in each dimension $j \in [p,q]$.
Furthermore, $Z$ has the homotopy type of $S^{p} \vee S^q$. Hence, $[p,q]$ is a gap
(for both $X$ and $Z$).
There is a homeomorphism
\[
\breve {\cal M}_{0,q}(S^{q-p}) \to \breve {\cal M}_{p,q}(Z)\, ,
\]
where $S^{q-p}$ is regarded as the $(q-p)$-skeleton of $X$, which 
is given by the evident bijections on the set of cells.
The homotopy equivalence $S^{(q-p)} @> f > {}^\sim >\breve {\cal M}_{0,q}(S^{q-p})$
defined as in \eqref{eqn:ident-weights-sphere}, followed by this homeomorphism gives
a good protocol
\[
\Sigma := S^{q-p} @> {}_{\sim} >>\breve {\cal M}_{p,q}(Z)
\]
The rest of the argument follows by the argument of
the proof of Theorem \ref{bigthm:examples}(1).
\end{proof}

\section {Robust weights}\label{sec:robust}

\begin{defn} The {\it discriminant} $\cal D_{p,q}$ is the complement
of the space good parameters from $\cal M_{p,q}$, i.e.,
\[
\cal D_{p,q} := \cal M_{p,q} \setminus \breve {\cal M}_{p,q}\, .
\]
\end{defn}

Let $\cal M_{p,q}^+$ be the one-point compactification of 
$\cal M_{p,q}$ and similarly, let $\cal D^+_{p,q}$ be the one-point compactification of 
$\cal D_{p,q}$. Then $\cal M_{p,q}^+$ is a sphere of dimension $\sum_j n_j$.

\begin{thm} \label{thm:cw-structure} $\cal M_{p,q}^+$ has the structure
of a regular CW complex in which the subspace
 $\cal D_{p,q}^+$ is a subcomplex of codimension $q-p+1$.
 \end{thm}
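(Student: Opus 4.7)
The plan is to extract a regular CW structure on $\cal M_{p,q}^+$ from a central hyperplane arrangement in $\cal M_{p,q}$, and then identify $\cal D_{p,q}^+$ as a subcomplex.

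First I would write
\[
\cal D_{p,q} \,=\, \bigcap_{j=p}^q \Delta_j,\qquad \Delta_j \,:=\, \bigcup_{x\ne y\in X_j}\bigl\{W : W_j(x) = W_j(y)\bigr\},
\]
exhibiting $\cal D_{p,q}$ as cut out by a central hyperplane arrangement. In each factor $\Bbb R^{X_j}$ of the decomposition $\cal M_{p,q} = \prod_{j=p}^q \Bbb R^{X_j}$, this is the braid arrangement. I would then refine by adjoining the coordinate hyperplanes $\{W_j(x) = 0\}$, producing an essential central arrangement $\cal A$. A direct computation of normals shows that in each factor $\Bbb R^{X_j}$ the chambers of $\cal A$ are simplicial cones indexed by total orderings of $X_j \sqcup \{\ast\}$, where $\ast$ denotes a formal element of value $0$. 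Since a product of simplicial arrangements is simplicial, every face of $\cal A$ is a simplicial cone.

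Next I would build the CW structure on $\cal M_{p,q}^+$. The $0$-cells are $\{\infty\}$ and $\{0\}$; for each face $F$ of $\cal A$ of positive dimension $k$, the corresponding $k$-cell is the closure $\overline F^+$ in $\cal M_{p,q}^+$. Because $F$ is a simplicial $k$-cone, its closure in the linear span of $F$ is homeomorphic to $[0,\infty)^k$, and one-point compactification identifies $\overline F^+$ with the quotient of $[0,1]^k$ by the subcomplex of faces on which some coordinate equals $1$, which is a closed $k$-disk. The incidence structure is dictated by the face poset of $\cal A$, together with the observation that every positive-dimensional closed cell contains both $0$ and $\infty$. A straightforward inductive verification confirms regularity.

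Finally I would identify $\cal D_{p,q}^+$ as a subcomplex. A face $F(\sigma_\bullet)$ of $\cal A$, with $\sigma_\bullet = (\sigma_p, \ldots, \sigma_q)$ a tuple of ordered set partitions of the sets $X_j \sqcup \{\ast\}$, lies in $\cal D_{p,q}$ iff for every $j \in [p,q]$ the function $W_j$ is non-injective on $X_j$, equivalently some block of $\sigma_j$ contains two distinct elements of $X_j$. This condition is preserved by face relations, so $\cal D_{p,q}^+$ is a union of cells. A dimension count gives $\dim F(\sigma_\bullet) = \sum_j (\#\text{blocks}(\sigma_j) - 1)$; non-injectivity in each factor forces codimension at least $1$ per factor, for a total of at least $q-p+1$, with equality realized by taking each $\sigma_j$ with $\ast$ in its own block and exactly one block of size $2$ in $X_j$ (the rest singletons).

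The main obstacle is the rigorous verification that each compactified cell $\overline F^+$ is a closed $k$-disk with the expected boundary incidences, in particular at the point $\infty$. While this follows intuitively from the simplicial-cone structure, a careful argument requires inductively tracking how lower-dimensional cells assemble to form the boundary of higher-dimensional ones and checking that the characteristic maps are homeomorphisms onto their images.
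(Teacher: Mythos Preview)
Your approach is correct and differs from the paper's. The paper uses only the product of braid arrangements: its cells are indexed by tuples $h_\bullet$ of ordered set partitions of the sets $X_j$ themselves (what the paper calls \emph{height data}), the open cell $C(h_\bullet)$ consisting of those weights $W_\bullet$ that induce exactly $h_\bullet$; it then identifies each closure $D(h_\bullet)$ with a product of regions $\{x_1 \le \cdots \le x_{d_i}\}\subset \Bbb R^{d_i}$ and argues that the one-point compactification is a disk of dimension $\sum_i d_i$. Your construction is a genuine refinement: adjoining the coordinate hyperplanes produces an essential simplicial arrangement whose faces are indexed by ordered set partitions of $X_j \sqcup \{\ast\}$ rather than of $X_j$, so every positive-dimensional closed cell is a simplicial cone $\cong [0,\infty)^k$ and the ``compactification is a disk'' verification becomes uniform in $k$. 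The trade-off is more cells against a cleaner local picture; in particular your two $0$-cells $\{0\}$ and $\{\infty\}$ neatly handle the minimal stratum (where every $W_j$ is constant), which in the paper's coarser decomposition is the full diagonal $\cong\Bbb R^{q-p+1}$ and compactifies to a sphere rather than a disk, so strictly speaking requires the very subdivision you introduce. Both routes give the same codimension count $\sum_j n_j - \sum_j (n_j-1) = q-p+1$ for $\cal D_{p,q}^+$.
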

 
The proof of Theorem \ref{thm:cw-structure} 
is an adaptation of the argument of \cite[prop.~7.1]{CKS13}.

By {\it height data} $h_\bullet$ we mean a partial ordering
$<_i$ on  $X_i$ for $i = p,\dots,q$. The partial ordering defines
an equivalence relation $\sim_i$ defined by $x\sim_i y$ if and only
if the length of a maximal chain terminating in $x$ is the same 
as the length of a maximal chain terminating in $y$.
Note that the set $X_i/\!\!\sim_i$ comes equipped with a 
canonical linear order defined by $<_i$.

Here is the main example:

\begin{ex} \label{ex:height} For $W_\bullet := (W_p,\dots,W_q) \in \cal M_{p,q}$ we define
$x <_i y$ if and only if $W_i(x) < W_i(y)$ for $x,y \in X_i$.
\end{ex}

\begin{defn} \label{defn:cell} Given height data $h_\bullet$, let
\[
C(h_\bullet) \subset \cal D_{p,q}
\]
be the subspace consisting of the set of weights $W_\bullet$ 
whose associated height data is $h_\bullet$ in the sense
of Example \ref{ex:height}.

Let $D(h_\bullet)$ denote the closure of $C(h_\bullet)$ in $\cal M_{p,q}$.
\end{defn}

\begin{prop}\label{prop:cell} The one-point compactification
$D(h_\bullet)^+$ of $D(h_\bullet)$ is homeomorphic to a disk of dimension $\sum_i d_i$,
where $d_i$ is the cardinality of $X_i/\!\!\sim_i$.
\end{prop}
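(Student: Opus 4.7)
The plan is to identify $D(h_\bullet)$ explicitly as a product of linear-order chambers in Euclidean spaces, and then compute its one-point compactification via a cone/suspension argument.

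Unpacking Definition \ref{defn:cell}: a weight $W_\bullet$ lies in $C(h_\bullet)$ precisely when, for each $i\in[p,q]$, the function $W_i\colon X_i\to\Bbb R$ is constant on every $\sim_i$-equivalence class and strictly increasing on the induced linearly ordered quotient $X_i/\!\sim_i$ (strict inequality is forced since Example \ref{ex:height} reads off the partial order from the strict $<$-relations on the values of $W_i$). Taking the closure in $\cal M_{p,q}$ relaxes strict to weak monotonicity, so choosing one representative per $\sim_i$-class provides a canonical homeomorphism
\[
D(h_\bullet)\,\cong\,\prod_{i=p}^{q}\Delta_i,\qquad \Delta_i := \{(v_1,\dots,v_{d_i})\in\Bbb R^{d_i}\,:\,v_1\le v_2\le\cdots\le v_{d_i}\}.
\]

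For each $i$, the affine change of variables $u_1 = v_1$ and $u_k = v_k - v_{k-1}$ for $k\ge 2$ gives $\Delta_i\cong \Bbb R\times [0,\infty)^{d_i-1}$; taking the product across $i$ yields
\[
D(h_\bullet)\,\cong\,\Bbb R^n\times[0,\infty)^m,\qquad n := q-p+1,\quad m := \sum_{i=p}^q (d_i - 1),
\]
so that $n+m = \sum_i d_i$. Now $K := \Bbb R^n\times[0,\infty)^m$ is a closed convex cone in $\Bbb R^{n+m}$ with apex at the origin. Set $S := K\cap S^{n+m-1}$. The polar decomposition $(r,s)\mapsto rs$ identifies $K$ with the open topological cone $C(S) = [0,\infty)\times S/(\{0\}\times S)$, so its one-point compactification is the unreduced suspension of $S$.

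It remains to identify $S$ as a closed disk and to observe that the suspension of a disk is a disk. The region $S$ is a closed, geodesically convex, full-dimensional subset of the sphere $S^{n+m-1}$ cut out by $m$ linear half-space conditions, so radial projection onto an affine chart meeting $S$ transversally identifies it with a closed convex body of dimension $n+m-1$; thus $S\cong D^{n+m-1}$. Combining this with the elementary identity $\Sigma D^{k} \cong D^{k+1}$---which one sees by writing the unreduced suspension $\Sigma D^k$ as two copies of $C(D^k)\cong D^{k+1}$ glued along the common base $D^k$, and noting that two $(k{+}1)$-disks identified along a $k$-disk in their boundaries give a single $(k{+}1)$-disk---yields $D(h_\bullet)^+\cong \Sigma D^{n+m-1}\cong D^{n+m}=D^{\sum_i d_i}$.

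The technical heart of the argument is the identification of the spherical region $S$ as a closed disk, which relies on $S$ being full-dimensional (so that at least one half-space condition is active and genuinely cuts the sphere). Everything else is bookkeeping: the coordinate change of the second step, the polar-to-cone identification, and the disk-suspension identity are all elementary, and the product structure from the first step ensures that the per-factor homeomorphisms glue into a single global homeomorphism without further compatibility checks.
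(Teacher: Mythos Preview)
Your overall strategy is sound and in fact more explicit than the paper's: by treating the product $\Bbb R^n\times[0,\infty)^m$ as a single cone and computing its one-point compactification directly as a suspension, you sidestep the paper's reduction to the individual factors $D(h_i)^+$ (which, to reassemble into $D(h_\bullet)^+$, tacitly uses that iterated smash products of disks with boundary basepoints are again disks).

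There is, however, a genuine gap in your identification of $S=K\cap S^{n+m-1}$ with a disk. Since $n=q-p+1\ge 1$, the cone $K=\Bbb R^n\times[0,\infty)^m$ contains the full line $\Bbb R e_1\times\{0\}$, and hence $S$ contains the antipodal pair $(\pm e_1,0)$. Consequently $S$ lies in no open hemisphere, so no radial projection onto an affine hyperplane is globally defined on $S$; the geodesic-convexity picture likewise breaks down at antipodal pairs. The conclusion is correct, but it needs a different argument. A clean repair uses the join decomposition $S^{n+m-1}\cong S^{n-1}\ast S^{m-1}$, under which
\[
S\ \cong\ S^{n-1}\ast\Delta,\qquad \Delta:=\{y\in S^{m-1}:y_j\ge 0\ \text{for all }j\}\ \cong\ D^{m-1}.
\]
Associativity of the join then gives
\[
S^{n-1}\ast D^{m-1}\ =\ S^{n-1}\ast(S^{m-2}\ast\mathrm{pt})\ \cong\ (S^{n-1}\ast S^{m-2})\ast\mathrm{pt}\ =\ C(S^{n+m-2})\ =\ D^{n+m-1}
\]
(for $m=1$ read this as $S^{n-1}\ast\mathrm{pt}=D^n$). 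This requires $m\ge 1$; when $m=0$ one has $S=S^{n-1}$ and $D(h_\bullet)^+\cong S^n$, so the degenerate case in which every $d_i=1$ is in fact an exception to the proposition as literally stated---your parenthetical remark about needing ``at least one half-space condition active'' is exactly this caveat.
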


\begin{proof} There is an evident homeomorphism
\[
D(h_\bullet) \cong D(h_p) \times \cdots \times D(h_q)\, ,
\]
where $D(h_i)$ is the closure of the set of those $W_i\:X_i \to \Bbb R$ in the function
space $F(X_i,\Bbb R)$
that induce the equivalence relation $\sim_i$. 
Hence, it will be enough to show that
$D(h_i)^+$ is homeomorphic to a disk of dimension $d :=d_i$.

We first show that $D(h_i)$ is homeomorphic to the subspace space $H\subset \Bbb R^d$
consisting of $d$-tuples of real
numbers
\[
(x_1,\dots,x_{d})
\]
such that $x_1 \le x_2 \le \cdots \le x_d$. 
The homeomorphism is defined as follows: a function $W_i \to X_i \to \Bbb R$
in the interior $C(h_i)$ of $D(h_i)$
defines a $d$-tuple of increasing real numbers $x_1 < \dots < x_d$ by taking the
image of $W_i$. This assignment defines an embedding $C(h_i) \to \Bbb R^d$ which extends
to a homeomorphism $D(h_i) @> \cong >> H$. On the other hand, it is easy to show that
$H^+$ is homeomorphic to the disk $D^d$.
 \end{proof}
 
 \begin{proof}[Proof of Theorem \ref{thm:cw-structure}]
 The inclusions $D(h_\bullet)^+ \subset \cal M^+_{p,q}$ define characteristic maps
 for a cell structure on $\cal M_{p,q}$. It is straightforward to check that cells
 of a given dimension are attached on top of cells of lower dimension,
 so we obtain regular CW structure on
 $\cal M^+_{p,q}$. The latter is a sphere of dimension $\sum_i n_i$, where
 $n_i = |X_i|$ for $p \le i\le q$.
 
 Clearly, $\cal D^+_{p,q} \subset \cal M^+_{p,q}$ is a subcomplex, since
a weight system $W_\bullet$ lies in the discriminant 
 precisely when no $W_i\: X_i \to \Bbb R$
 is one-to-one (this means the associated partial ordering is not linear).
 A straightforward dimension count then shows that the top dimensional cells of $\cal D^+_{p,q}$ have dimension
 \[
 \sum_{i=p}^q (n_i-1)\, .
  \] 
It follows that the codimension  of $\cal D^+_{p,q}$ is $q-p+1$.
 \end{proof}
 
 \subsection{The space of robust weights}
Consider a top-dimensional cell $D\subset \cal D_{p,q}^+$.  
Choose a small  closed disk $E^{q-p+1} \subset \cal M_{p,q}$
which is transversal to $D$ such that $D \cap E$ is a single point.
Then we obtain an inclusion 
\[
S^{q-p} := \partial E \to \breve{\cal M}_{p,q}\, .
\]
Consequently, the hypercurrent 
\[
{\cal J}^\ast_{p,q} \in H^{q-p}(\partial E; H^p(X;H_q(X))) \cong 
\hom(H_p(X), H_q(X))
\]
is defined, and it doesn't depend on the choice of
$E$.
To emphasize its dependence on 
the cell $D$, we denote the above hypercurrent by $J_D$.

\begin{defn} The cell $D$ is {\it inessential} if $J_D$ is trivial.
It is {\it essential} if $J_D$ is non-trivial.
 \end{defn}
 
\begin{defn} Let $\check {\cal D}_{p,q} \subset \cal D_{p,q}$ be
the subspace obtained by removing the interiors of every inessential cell.
The space of {\it robust weights} $\check {\cal M}_{p,q}$ is defined as the complement of 
$\check {\cal D}_{p,q}$ in $\cal M_{p,q}$, i.e.,
\[
\check {\cal M}_{p,q} \,\, := \,\, \cal M_{p,q} \setminus \check {\cal D}_{p,q}\, .
\]
A {\it robust protocol} is a map of spaces $\Sigma \to  \check {\cal M}_{p,q}$. 
\end{defn}
 
 \begin{prop}  There is a preferred extension
 \[
 \check {\cal J}_{p,q}\: H_{q-p}(\check{\cal M}_{p,q})\otimes H_p(X)\to H_q(X)
 \]
of the universal topological hypercurrent homomorphism $\cal J$.
  \end{prop}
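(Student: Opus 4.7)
The plan is to apply obstruction theory to the cohomology class $\cal J^\ast_{p,q} \in H^{q-p}(\breve{\cal M}_{p,q}; V)$, where $V := \hom(H_p(X), H_q(X))$. The task is to lift this class along the restriction
\[
H^{q-p}(\check{\cal M}_{p,q}; V) \to H^{q-p}(\breve{\cal M}_{p,q}; V)
\]
induced by the inclusion $\breve{\cal M}_{p,q} \subset \check{\cal M}_{p,q}$; uniqueness of such a lift will then justify the word ``preferred.'' By the long exact cohomology sequence of the pair $(\check{\cal M}_{p,q}, \breve{\cal M}_{p,q})$, a lift exists exactly when the image of $\cal J^\ast_{p,q}$ under the connecting homomorphism
\[
\partial\: H^{q-p}(\breve{\cal M}_{p,q}; V) \to H^{q-p+1}(\check{\cal M}_{p,q}, \breve{\cal M}_{p,q}; V)
\]
vanishes, and it is unique when $H^{q-p}(\check{\cal M}_{p,q}, \breve{\cal M}_{p,q}; V) = 0$.

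The set-theoretic difference $\check{\cal M}_{p,q} \setminus \breve{\cal M}_{p,q}$ is the disjoint union of the interiors of the inessential top cells $D \subset \cal D_{p,q}$. By Proposition~\ref{prop:cell} and Theorem~\ref{thm:cw-structure}, each such $D$ is a contractible submanifold of codimension $q-p+1$ in the ambient Euclidean space $\cal M_{p,q}$. Choose a sufficiently small tubular neighborhood $U_D$ of $D$ in $\cal M_{p,q}$, disjoint from the other cells of $\check{\cal D}_{p,q}$; since $D$ is contractible its normal bundle is trivial, so that $U_D \simeq \ast$ and $U_D \setminus D \simeq S^{q-p}$. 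Excision then yields
\[
H^\ast(\check{\cal M}_{p,q}, \breve{\cal M}_{p,q}; V) \,\,\cong\,\, \bigoplus_{D} H^\ast(U_D, U_D \setminus D; V),
\]
where the sum runs over inessential top cells of $\cal D_{p,q}$. The long exact sequence of each pair, together with the cohomology of a point and of $S^{q-p}$, yields $H^i(U_D, U_D \setminus D; V) = 0$ for $i \le q-p$ and $H^{q-p+1}(U_D, U_D \setminus D; V) \cong V$. In particular $H^{q-p}(\check{\cal M}_{p,q}, \breve{\cal M}_{p,q}; V) = 0$, and uniqueness of the lift is automatic.

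To verify the obstruction $\partial(\cal J^\ast_{p,q})$ vanishes, observe that under the excision decomposition the $D$-component factors as
\[
H^{q-p}(\breve{\cal M}_{p,q}; V) \longrightarrow H^{q-p}(U_D \setminus D; V) \cong H^{q-p}(\partial E; V) \cong V,
\]
the last isomorphisms arising from the deformation retraction of $U_D \setminus D$ onto a transverse boundary sphere $\partial E = S^{q-p}$. By construction this composite is exactly the class $J_D$ associated with $D$ in the discussion preceding the statement. Since $D$ is inessential, $J_D = 0$ for each such $D$, hence $\partial(\cal J^\ast_{p,q}) = 0$. The resulting unique lift $\check{\cal J}^\ast_{p,q} \in H^{q-p}(\check{\cal M}_{p,q}; V)$ and its adjoint provide the desired homomorphism $\check{\cal J}_{p,q}$. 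The main technical step is the identification of the local piece of $\partial$ with the linking-sphere class $J_D$: this is the standard compatibility between the Thom class of the normal bundle of $D$ and the transverse-disk construction that defines $J_D$, and is a routine verification once the tubular neighborhoods are fixed.
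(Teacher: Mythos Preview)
Your argument is correct and follows essentially the same route as the paper's proof: both use obstruction theory via the long exact sequence of the pair $(\check{\cal M}_{p,q},\breve{\cal M}_{p,q})$, identifying the relative (co)homology with a sum over inessential top cells via the transversal disks $(E,\partial E)$, and then invoking $J_D=0$ to kill the obstruction. Your version is in fact more explicit about the excision/tubular-neighborhood step and more careful with the degree bookkeeping than the paper's brief sketch.
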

  
 \begin{proof} This is a direct consequence
 of obstruction theory, since the relative homology groups
 $H_\ast(\check{\cal M}_{p,q}, \breve{\cal M}_{p,q})$ 
 are non-trivial in the single degree $q-p$ and are
 freely generated by the homology class of the maps
 $(E,\partial E) \to (\check{\cal M}_{p,q},\breve{\cal M}_{p,q})$, 
 with one such map for each inessential cell $D$. Since each $J_D$ is trivial
 it follows that the extension exists. The extension is preferred because 
  the relative homology group vanishes in degree $q-p+1$.
 \end{proof}
 
 \begin{cor} A robust protocol $\Sigma\to \check{\cal M}_{p,q}$ induces
  a hypercurrent homomorphism
 \[
 \check {\cal J}_{p,q}\: H_{q-p}(\Sigma) \otimes H_p(X) \to H_q(X)\, .
 \]
 \end{cor}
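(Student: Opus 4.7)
The plan is to deduce the corollary immediately from the preceding proposition by functoriality (naturality in the base). Given a robust protocol $\gamma\:\Sigma \to \check{\cal M}_{p,q}$, I would simply form the composition
\[
H_{q-p}(\Sigma) \otimes H_p(X) \xrightarrow{\gamma_\ast\otimes \id} H_{q-p}(\check{\cal M}_{p,q}) \otimes H_p(X) \xrightarrow{\check{\cal J}_{p,q}} H_q(X),
\]
where the second map is the universal extended hypercurrent provided by the proposition, and take this composition as the definition of $\check{\cal J}_{p,q}$ for the protocol $\gamma$.

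To see that this is the correct extension, I would verify that when $\gamma$ happens to factor through the open subspace of good weights $\breve{\cal M}_{p,q} \subset \check{\cal M}_{p,q}$, this composition recovers the topological hypercurrent of Theorem \ref{bigthm:properties}. This is immediate from the proposition, because $\check{\cal J}_{p,q}$ restricts along the inclusion $\breve{\cal M}_{p,q} \hookrightarrow \check{\cal M}_{p,q}$ to the universal hypercurrent $\cal J_{p,q}$ on the space of good weights, and the topological hypercurrent for any good protocol is itself obtained from $\cal J_{p,q}$ by pullback along the structure map (this is the Functoriality clause of Theorem \ref{bigthm:properties}).

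The only point that requires a moment's thought is that the construction does not depend on any additional choices. But both $\gamma_\ast$ and $\check{\cal J}_{p,q}$ are canonical: the former by the functoriality of singular homology, the latter by the preferred nature of the extension established in the proposition. Thus the composition is well-defined, and its functoriality in $\Sigma$ and its homotopy invariance are inherited from the corresponding properties of $\gamma_\ast$.

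There is essentially no obstacle here; the content lies in the earlier proposition that produced the extension $\check{\cal J}_{p,q}$ via obstruction theory (using that the inessential cells contribute vanishing obstruction classes and that the relevant relative homology vanishes one degree higher). Once that universal extension is in hand, the corollary is a formal application of naturality.
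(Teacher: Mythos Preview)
Your proposal is correct and matches the paper's approach: the corollary is stated without proof in the paper, being an immediate consequence of the preceding proposition by pulling back along $\gamma_\ast$. Your explicit verification that this agrees with the original hypercurrent for good protocols is a nice touch but not strictly required.
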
 
 
 \subsection{Homotopy type of $\check{\cal M}_{p,q}$}
An explicit closed disk $E \subset \breve{\cal M}_{p,q}$ is obtained as follows:
let $W_\bullet \in \cal D^+_{p,q}$ be a point in the interior of a top dimensional
closed cell. Then for each $i$ with $p \le i \le q$, there are a unique pair of elements
 $x_i,y_i \in W_i$
such that $W_i(x_i) = W_i(y_i)$. 
Then construct a transversal cube of dimension $q-p+1$ in $\check{\cal M}_{p,q}$ with center $W_\bullet$, we resolve the $W_i$ as follows: consider the set 
 of weights $V_\bullet = (V_p,\dots, V_q)$
such that for a sufficiently small choice of $\epsilon > 0$,
\begin{itemize}
\item $V_i(u) = W_i(u)$ for $u \ne y_i$,
\item $|V_i(y_i) - W_i(y_i)| \le \epsilon$. 
\end{itemize}
(compare Fig.~\ref{fig:cube}). 
Then the set of such $V_\bullet$ forms a  transversal $(q-p+1)$-cube.
Let $\tau\: S^{q-p} \to \breve{\cal M}_{p,q}$ denote the restriction to the boundary.

Applying the above procedure to each inessential cell, we obtain
a map
\[
\beta\: S^{q-p} \times T \to \breve {\cal M}_{p,q}\, ,
\]
where $T$ denotes the set of inessential cells of ${\cal D}_{p,q}$.
By Alexander duality, we infer

\begin{lem} There is a homotopy equivalence
\[
\check {\cal M}_{p,q} \,\, \simeq \,\, \breve {\cal M}_{p,q} \cup_{\beta} (D^{q-p+1} \times T) \, .
\] 
\end{lem}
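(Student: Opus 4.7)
The plan is to upgrade the set-theoretic decomposition
\[
\check{\cal M}_{p,q} \,=\, \breve{\cal M}_{p,q} \,\cup\, \bigsqcup_{D\in T}\operatorname{int}(D)
\]
(immediate from the definitions, since passing from $\cal D_{p,q}$ to $\check{\cal D}_{p,q}$ deletes exactly the open interiors of the inessential top-dimensional cells) into a homotopy pushout. Concretely, I want to show that adjoining $\operatorname{int}(D)$ to $\breve{\cal M}_{p,q}$ is, up to homotopy, the same as attaching a single $(q-p+1)$-cell along the map $\tau_D:S^{q-p}\to\breve{\cal M}_{p,q}$ coming from the boundary of a transversal cube, and then assemble these attachments over $D\in T$ into the pushout along $\beta$.

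For each $D\in T$, Proposition \ref{prop:cell} exhibits $D$ as a product $D(h_p)\times\cdots\times D(h_q)$ of closed affine polytopes inside the vector space $\cal M_{p,q}$, where in each factor $D(h_i)$ the interior is cut out by a single linear coincidence equation $W_i(x_i)=W_i(y_i)$. Hence near $\operatorname{int}(D)$, the ambient $\cal M_{p,q}$ splits locally as $\operatorname{int}(D)\times\Bbb R^{q-p+1}$, with $\cal D_{p,q}$ given by the zero section. Choosing such a splitting small enough to avoid all other (lower-dimensional) cells of $\cal D_{p,q}$ yields an open product tubular neighborhood
\[
\nu(D)\,\cong\,\operatorname{int}(D)\times \Bbb R^{q-p+1}
\]
satisfying $\cal D_{p,q}\cap\nu(D)=\operatorname{int}(D)\times\{0\}$. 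The boundary of the transversal cube defining $\tau_D$ is by construction a cross-sectional sphere of this neighborhood, so the inclusion $\nu(D)\setminus\operatorname{int}(D) \hookrightarrow \breve{\cal M}_{p,q}$ represents $\tau_D$ up to homotopy. Since $T$ is finite and the $\operatorname{int}(D)$ are pairwise disjoint, the $\nu(D)$ may be chosen pairwise disjoint as well.

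With these neighborhoods in hand, $\breve{\cal M}_{p,q}\cup\operatorname{int}(D)$ is realized as the strict pushout
\[
\breve{\cal M}_{p,q}\,\cup_{\nu(D)\setminus\operatorname{int}(D)}\,\nu(D),
\]
which is a homotopy pushout (after thickening to closed inclusions with collar neighborhoods). Contractibility of $\operatorname{int}(D)$ gives $\nu(D)\simeq D^{q-p+1}$ and $\nu(D)\setminus\operatorname{int}(D)\simeq S^{q-p}$, and under these equivalences the inclusion into $\breve{\cal M}_{p,q}$ corresponds to $\tau_D$ while the inclusion into $\nu(D)$ becomes $S^{q-p}\hookrightarrow D^{q-p+1}$. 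Hence $\breve{\cal M}_{p,q}\cup\operatorname{int}(D)\simeq \breve{\cal M}_{p,q}\cup_{\tau_D}D^{q-p+1}$. Assembling these attachments over $D\in T$ using disjointness of the $\nu(D)$ produces the asserted equivalence $\check{\cal M}_{p,q}\simeq \breve{\cal M}_{p,q}\cup_\beta(D^{q-p+1}\times T)$. The main technical obstacle is the construction of the tubular neighborhood near $\partial D$, where other cells of $\cal D_{p,q}$ meet $D$; this is handled by shrinking $\nu(D)$ toward $\operatorname{int}(D)$ to avoid those lower-dimensional strata, which is harmless because those strata remain inside $\check{\cal D}_{p,q}$ and hence are absent from $\check{\cal M}_{p,q}$.
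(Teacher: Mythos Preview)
Your argument is correct. The paper's own proof is a single sentence: ``By Alexander duality, we infer'' the lemma. You have instead supplied the direct tubular neighborhood / homotopy pushout argument that unpacks what is really going on: each $\operatorname{int}(D)$ sits as an open piece of a codimension-$(q{-}p{+}1)$ affine subspace of $\cal M_{p,q}$, so it admits a product normal neighborhood whose punctured version lies in $\breve{\cal M}_{p,q}$ and retracts onto the cross-sectional sphere $\partial E = S^{q-p}$ used to define $\tau_D$. This is precisely the geometric content underlying the paper's appeal to Alexander duality (complements of locally flat cells in a sphere are governed by their linking spheres), but your version actually produces a \emph{homotopy equivalence} rather than merely a homology isomorphism, which is what the statement asserts. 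The one technical point you flag---that the neighborhood must be allowed to taper near $\partial D$ to avoid the lower-dimensional strata of $\cal D_{p,q}$---is handled exactly as you say, since those strata remain in $\check{\cal D}_{p,q}$ and hence outside $\check{\cal M}_{p,q}$, and since $\check{\cal D}_{p,q}$ is closed one can choose a (varying-radius) open tube over $\operatorname{int}(D)$ disjoint from it.
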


We now identify the map $\tau$ in terms of the decomposition of $\breve {\cal M}_{p,q}$
into a wedge of spheres.
Observe that $\tau$ is just the iterated join of maps
\[
S^0 \ast \cdots S^0 @>\tau_p \ast \cdots \ast \tau_q >> E(X_p,\Bbb R) \ast \cdots \ast E(X_q,\Bbb R)\, ,
\]
where $\tau_j\: S^0 \to E(X_j,\Bbb R)$ is defined by
$\tau_j(\pm 1) = W^{\pm}_j$, with $W^{\pm}_j(x) = W_j(x)$ for $x \ne y_j$
and $W^{\pm}_j(y_j) = W_j(y_j) \pm \epsilon$. 

The map $\tau_j$ is generally not pointed: it is determined up to contractible choice
by two nontrivial permutations, given by the changes in the ordering from the basepoint component to the components of $\tau_i(\pm 1)$. Hence, in terms of the identification
$\breve{\cal M}_{p,q}$ with a wedge of $(q-p)$-spheres \eqref{eqn:decomposition},
$\tau$ is identified with the following composite 
\begin{equation} \label{eqn:attach}
S^{q-p} @> \text{pinch} >> \bigvee_{2^{q-p}} S^{q-p} \subset \bigvee_{c_{p,q}} S^{q-p} \, ,
\end{equation}
where the first of these is the pinch map and the second is an inclusion of summands
obtained from the choice of two non-trivial permutations of $X_j$ for each $j$, with $j \in [p,q]$.  

\begin{cor} There is a homotopy equivalence
\[
\check {\cal M}_{p,q} \,\, \simeq \,\, \bigvee_{d_{p,q}} S^{q-p}\, ,
\]
with $d_{p,q} = c_{p,q} - u$, where $u$ is the number of inessential cells.
\end{cor}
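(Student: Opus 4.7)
The plan is to start from the preceding lemma and analyze the attaching maps via \eqref{eqn:attach}. Combining the homotopy equivalence $\check{\cal M}_{p,q} \simeq \breve{\cal M}_{p,q} \cup_\beta (D^{q-p+1} \times T)$ with the wedge decomposition $\breve{\cal M}_{p,q} \simeq \bigvee_{c_{p,q}} S^{q-p}$, one obtains $\check{\cal M}_{p,q}$ by attaching a $(q{-}p{+}1)$-cell to the wedge for each of the $u = |T|$ inessential cells, along the composite $\tau_D$ described in \eqref{eqn:attach}.

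I would then analyze a single such attachment. Each $\tau_D$ is the pinch map onto $\bigvee_{2^{q-p}} S^{q-p}$ followed by the inclusion of these summands into the full wedge, so its class in $H_{q-p}(\bigvee_{c_{p,q}} S^{q-p}) \cong \Bbb Z^{c_{p,q}}$ is a signed sum of $2^{q-p}$ distinct basis vectors, hence a \emph{primitive} lattice element. The key geometric input is the standard cell-cancellation fact: for $n \ge 2$, attaching an $(n{+}1)$-cell to $\bigvee_k S^n$ along a map representing a primitive homology class yields $\bigvee_{k-1} S^n$ up to homotopy. This holds because $\pi_n(\bigvee_k S^n) \cong \Bbb Z^k$ and $GL_k(\Bbb Z)$ acts on $\bigvee_k S^n$ by self-homotopy-equivalences, so one may transform any primitive attaching class into a standard basis vector, reducing the attachment to filling in a single sphere summand with a disk. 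For $q-p = 1$ one argues analogously using Nielsen transformations on the free fundamental group.

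Finally, I would iterate over all $u$ inessential cells. The principal obstacle is justifying that each stage of the iteration cancels exactly one summand, i.e., that the classes $\{[\tau_D] : D \in T\} \subset \Bbb Z^{c_{p,q}}$ span a rank-$u$ direct-summand sublattice. I would verify this combinatorially using the explicit description preceding \eqref{eqn:attach}: each inessential cell $D$ is determined by the preferred pairs $(x_i, y_i) \in X_i^2$ of equalized elements at each level together with the linear orderings on the quotients $X_i/(x_i \sim y_i)$, and one can distinguish $D$ from other inessential cells $D'$ by producing a wedge summand of $\bigvee_{c_{p,q}} S^{q-p}$ (indexed by an appropriate tuple of permutations built from the transpositions $(x_i\, y_i)$ and the order data of $D$) that appears in $[\tau_D]$ with coefficient $\pm 1$ but has coefficient $0$ in every $[\tau_{D'}]$. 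This independence, combined with iterated cell-cancellation, yields the required homotopy equivalence $\check{\cal M}_{p,q} \simeq \bigvee_{c_{p,q} - u} S^{q-p}$.
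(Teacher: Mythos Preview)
Your overall strategy mirrors the paper's closely. Both start from the preceding lemma, handle a single cell attachment, and then iterate. For one attachment the paper splits off the untouched summands and invokes Lemma~\ref{lem:pinch} (the mapping cone of the pinch map $S^{k}\to\bigvee_m S^{k}$ is $\bigvee_{m-1}S^{k}$); your primitive-class/$GL_k(\Bbb Z)$ cell-cancellation argument reaches the same conclusion by an equivalent and slightly more general route.

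You are more explicit than the paper about the iteration step: the paper simply writes ``by induction'' after computing one cone, whereas you correctly isolate the requirement that the attaching classes $\{[\tau_D]:D\in T\}$ span a rank-$u$ direct summand of $H_{q-p}(\breve{\cal M}_{p,q})\cong\Bbb Z^{c_{p,q}}$.

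However, your proposed verification of this direct-summand condition does not work as stated. The distinguishing wedge summand you associate to $D$ is built solely from the combinatorial type of $D$ (the tied pairs $(x_i,y_i)$ and the induced orders), and uses no information about which other cells are inessential. Hence, if the construction succeeded, it would in fact inject the set of \emph{all} top-dimensional cells of $\cal D_{p,q}$ into the set of $c_{p,q}$ wedge summands. That is impossible as soon as some $n_j\ge 3$: the number of top cells of $\cal D_{p,q}$ is $\prod_{j=p}^{q}\binom{n_j}{2}(n_j-1)!$, which strictly exceeds $c_{p,q}=\prod_{j=p}^{q}(n_j!-1)$ (already $6>5$ for a single factor with $n_j=3$). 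So no purely combinatorial triangularity argument of the kind you sketch can exist; any valid argument for the direct-summand property must genuinely use the hypothesis that the cells in $T$ are inessential, which your outline does not do.
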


\begin{proof}  $\check {\cal M}_{p,q}$ is obtained by attaching $(q-p+1)$-cells
a wedge of $c_{p,q}$ copies of the sphere $S^{q-p}$, where each cell is attached
by a map of the form \eqref{eqn:attach}. 

The mapping cone of \eqref{eqn:attach}
 is the wedge of  the mapping cone of 
 the pinch map with a wedge of spheres $S^{q-p}$ indexed
 over $c_{p,q} - 2^{q-p}$. Hence, by induction it suffices to show that the mapping cone
 of the above pinch map is a wedge of $2^{q-p}-1$ copies of $S^{q-p}$. But this follows directly from Lemma \ref{lem:pinch} below.
  \end{proof}

The above proof used the following assertion about the pinch map. 
 
 \begin{lem} \label{lem:pinch} The mapping cone of the pinch map
 \[
 S^k \to \bigvee_m S^k \, .
 \]
 is a wedge of $(m-1)$-copies of $S^k$. 
 \end{lem}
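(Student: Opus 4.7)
The plan is to show that the pinch map $p\:S^k \to \bigvee_m S^k$ is related, via a self-homotopy-equivalence of the target, to the inclusion $\iota_1\:S^k \hookrightarrow \bigvee_m S^k$ of the first wedge summand. The mapping cone of $\iota_1$ is readily seen to be $\bigvee_{m-1} S^k$ (one simply collapses the attached disk together with the first summand to a point). Any self-homotopy-equivalence of the target transports the mapping cone of one map to the mapping cone of the other, so this reduction suffices.

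The key step is to construct a based self-homotopy-equivalence $f\:\bigvee_m S^k \to \bigvee_m S^k$ with $f \circ \iota_1 \simeq p$. Assume first that $k \ge 2$. Then the Hurewicz theorem identifies $\pi_k(\bigvee_m S^k) \cong \Bbb Z^m$ with basis given by the inclusions $[\iota_1], \dots, [\iota_m]$, and under this identification $[p] = (1,1,\dots,1)$ and $[\iota_1] = (1,0,\dots,0)$. I would let $f$ be a based self-map realizing the invertible integer matrix with first column $(1,1,\dots,1)^{T}$ and the identity in the remaining columns; such a map exists by assembling its components on each wedge summand. Because the matrix is invertible, $f$ induces an isomorphism on $\pi_k$, hence on all homology, and Whitehead's theorem makes $f$ a homotopy equivalence. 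By construction the composition $f \circ \iota_1$ has the same class in $\pi_k$ as $p$, so $f \circ \iota_1 \simeq p$.

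For $k = 1$ the space $\bigvee_m S^1$ is a $K(F_m, 1)$. I would realize the Nielsen-type automorphism $\phi$ of $F_m$ defined by $\phi(x_1) = x_1 x_2 \cdots x_m$ and $\phi(x_i) = x_i$ for $i > 1$ as a based self-map $f$, by sending the first loop to the word $x_1 x_2 \cdots x_m$ and the other loops to themselves. Since $\phi$ is an automorphism (its inverse sends $x_1 \mapsto x_1 x_m^{-1} \cdots x_2^{-1}$ and fixes the remaining generators) and both source and target are aspherical, $f$ is a homotopy equivalence by Whitehead. The composition $f \circ \iota_1$ represents $\phi(x_1) = x_1 \cdots x_m \in F_m$, the same element as $p$, so $f \circ \iota_1 \simeq p$.

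The main obstacle is the $k = 1$ case: the Hurewicz identification breaks down and the non-abelian nature of $\pi_1$ rules out direct matrix arithmetic. The workaround is the explicit Nielsen transformation exhibited above; the aspherical structure of the wedge of circles then allows the required free group automorphism to be realized as a cellular homotopy equivalence, after which the reduction to $\iota_1$ proceeds exactly as in the higher-dimensional case.
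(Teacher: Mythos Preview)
Your proof is correct. Both your argument and the paper's hinge on the same observation: the self-map $f\:\bigvee_m S^k \to \bigvee_m S^k$ given by the pinch on the first summand and the identity on the remaining summands is a homotopy equivalence. You make this explicit, verify it via Hurewicz/Whitehead for $k\ge 2$ and via a Nielsen automorphism of $F_m$ for $k=1$, and then invoke functoriality of mapping cones to reduce to $\mathrm{Cone}(\iota_1)\simeq\bigvee_{m-1}S^k$. The paper instead packages the same fact as the assertion that the square
\[
\xymatrix{
\ast \ar[r]\ar[d] & \bigvee_{m-1}S^k \ar[d]^{i_1}\\
S^k \ar[r]_{\text{pinch}} & \bigvee_m S^k
}
\]
is homotopy cocartesian (the comparison map from the honest pushout $S^k\vee\bigvee_{m-1}S^k$ to the lower-right corner is exactly your $f$), and then reads off the equivalence $\bigvee_{m-1}S^k\to C$ from the induced map on horizontal cofibers. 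So the underlying idea is the same; your route is more hands-on and is more careful about the $k=1$ case, which the paper leaves implicit.
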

 
\begin{proof}  For the sake of completeness we first define the pinch map. Let $T\subset S^1$ be the set of $m$-th roots of unity.
Suspend the quotient map $S^1 \to S^1/T$
$(k-1)$-times to obtain a map $S^k\to \Sigma^{k-1}(S^1/T)$. The latter is 
the pinch map once we identify $\Sigma^{k-1}(S^1/T)$ with an $m$-fold wedge of 
$k$-spheres (this is clear, since $S^1/T$ is a bouquet of $m$ circles).

We now proceed with the proof. Consider the commutative diagram
 \[
 \xymatrix{
 \ast \ar[d] \ar[r] & \bigvee_{m-1} S^k \ar@{=}[r] \ar[d]^{i_1} & \bigvee_{m-1} S^k \ar[d] \\
 S^k \ar[r]^{\text{pinch}} \ar@{=}[d] & \bigvee_m S^k \ar[r] \ar[d]^{p_1} & C \ar[d] \\
 S^k \ar@{=}[r] & S^k \ar[r] & \ast
 }
 \]
 whose horizontal and vertical rows are homotopy cofiber sequences, where $p_1$ is projection onto the first summand and $i_1$ is the inclusion away from the first summand. The upper left square is homotopy cocartesian, so the map $\vee_{m-1} S^k \to C$ is a homotopy equivalence.
\end{proof}

\section{Analytical hypercurrents \label{section:analytical}}

In this section we define an analytical version of the hypercurrent. 
In what
follows, $\Bbb F$ is taken to be the field of real numbers $\Bbb R$.

\subsection{Motivation} Suppose $\Sigma = \Bbb R$ 
and $X$ is a connected CW complex of dimension one. 
Let $C_\ast(X)$ be the real cellular chain complex of $X$.
Fix
a smooth protocol $\gamma\: \Bbb R \to \cal M_X$.
The {\it master equation} (or Kolmogorov equation) is a first order linear differential equation
\begin{equation}\label{eqn:master}
\dot\rho = -H\rho 
\end{equation}
where $H = H(t) = \partial \partial^\ast_{E,W}$ is the {\it biased} graph Laplacian, and $\rho(t) \in C_0(X)$ is a one-parameter family of 
zero chains, i.e., distributions.
The operator  $\partial^\ast_{E,W}\: C_0(X) \to C_1(X) $ is the (time-dependent) biased formal adjoint to the boundary operator 
$\partial\: C_1(X) \to C_0(X)$ that uses 
$\gamma(t) = (E(t),W(t))$ to modify the standard inner product structure on $C_\ast(X)$ (for details, see \cite{CKS13} and \cite{CCK15b}).
Equation \eqref{eqn:master} describes biased diffusion
on $X$.

The {\it analytical current} $\gamma$ is the quantity
\begin{equation} \label{eqn:ast-defn}
-\partial^\ast_{E,W}\rho \in C_1(X)
\end{equation}
where $\rho$ is a formal solution to \eqref{eqn:master}. 
Then  \eqref{eqn:ast-defn} coincides with
\begin{equation} \label{eqn:dagger-definition}
\partial^{\dagger}_{E,W} \dot \rho \,\, = \,\, \partial^{\dagger}_{E,W} d\rho \lrcorner \mu
\end{equation}
where $\partial^{\dagger}_{E,W}$ is the Moore-Penrose pseudoinverse
of $\partial$ (using the modified inner product structure), 
$d$ is exterior derivative, $\mu = \tfrac {d}{dx}$ is the co-volume form of $\Bbb R$ and $\lrcorner$ denotes contraction.  

Since the operation $\phi \mapsto \phi\lrcorner \mu$  equates $1$-forms with $0$-forms,  
we could have instead defined the analytical current 
as the 1-form
\begin{equation} \label{eqn:J-as-1-form}
J^{\text{an}} := \partial^{\dagger}_{E,W} d\rho \,\, \in\,\,  
\Omega^1(\Bbb R;C_1(X))
\end{equation} 
The main advantage of the reformulation \eqref{eqn:J-as-1-form} is that
it will generalize to higher dimensions.

\begin{rem}
In \cite{CKS13} we studied the case when $\gamma$ is periodic. After taking adiabatic (slow driving) limit, we showed that the formal solution $\rho$
to \eqref{eqn:master} is also periodic. In this case, we can consider 
the analytical current as a $1$-form on the circle. Furthermore, 
the ``adiabatic theorem'' implies that can 
use the Boltzmann distribution in place of 
the formal solution $\rho$ to define $J^{\text{an}}$. The Boltzmann distribution is the unique 0-form $\rho^B\: S^1 \to C_{0}(X)$  
such that  $\partial^\ast_{E,W}\rho^B = 0$.

The fundamental theorem of calculus implies that integration of this one form defines a homology class
$q\in H_1(X;\Bbb R)$ called the {\it average current}.
In what follows below,  the periodicity condition is replaced by 
the assumption the domain $\Sigma$ of $\gamma$ is a closed manifold.
\end{rem}

\subsection{Higher dimensions}
The input for the construction is a triple
\[
(X,\Sigma,\gamma)\, ,
\]
in which $X$ is a finite connected CW complex, and $\Sigma$ is a compact smooth manifold and $\gamma\: \Sigma \to  \prod_j \Bbb R^{X_j}$ is a smooth protocol
with respect to a gap $[p,q]$. 
As usual, we often write $\gamma(b) = W_\bullet(b)$.
We do not require $\gamma$ to be good.

The analytical hypercurrent takes values in homology with real coefficients. 
We will define a smooth differential form
\[
\cal J^{\text{\rm an}}  \in \Omega^\ast(\Sigma;\End(\bar C)) 
\]
of total degree zero. 

\subsection{Generalized inverses} 
We  digress to describe formulas for the pseudoinverse of a linear transformation in two different cases.

Suppose that $A\: V\to W$ is a one-to-one linear transformation of finite dimensional real vector spaces, where $W$ is equipped with an inner product.
The adjoint can be considered as the linear map $A^\ast\: W \to V^\ast$ characterized by the equation $A^\ast(w)(v) = \langle w,Av\rangle$. Then
the composition $A^\ast A$ is invertible and the pseudoinverse of $A$ in this case
can be defined as the left inverse 
\[
A^\dagger := (A^\ast A)^{-1}A^\ast\: W\to V \, .
\]
In this instance one can characterize $A^\dagger$ as follows: for a given vector $w\in W$, the vector $v := A^\dagger(w)\in V$ 
is the unique vector which minimizes
the norm
\[
|Av - w|\, .
\]
Similarly, suppose a linear transformation $D\: V \to U$ is onto, 
and assume that $V$ is equipped with an inner product. Consider
the adjoint $D^\ast\: U^\ast \to V$ that is characterized by
$\langle D^\ast(\delta),v\rangle = \delta(D(v))$. In this case $DD^\ast$ is invertible and we define the pseudoinverse as the right inverse
\[
D^\dagger := D^\ast (DD^\ast)^{-1}\: U \to V\, .
\]
Then $D^\dagger$ is characterized by the following property:
for any vector $u \in U$, the vector $v = D^\dagger(u)\in V$ is the unique vector
satisfying $Dv = u$ such that the norm
\[
|v|
\]
is a minimum.

\subsection{Some operators}
Fix a smooth protocol $\Sigma \to \cal M_{p,q}$.
For each $b\in \Sigma$ one has a smooth weight $W_k(b)\: X_k \to \Bbb R$ for $p \le k \le q$.
The weight
$W_k(b)$ defines a modified inner product on $\bar C_k$ which is defined on basis
elements by $\langle e,e'\rangle_W := W_k(b)\delta_{ee'}$ where $e,e' \in X_{k-p}$ and
$\delta_{ee'}$ is Kronecker delta. 

Using the modified inner product, we form the pseudoinverse of the surjection 
$\partial\: \bar C_k \to \bar B_{k-1}$, where $B_j$ denotes the vector space of $j$-boundaries.
If we let $b\in B$ vary, the construction just outlined gives a 0-form
\[
\partial_W^{\dagger}\: \Sigma \to  \hom(\bar B_{k-1},\bar C_k)\, .
\]
The latter  induces a homomorphism of the same name
\begin{equation} \label{eqn:partial-pseudo}
\partial_W^{\dagger} \: \Omega^\ast(\Sigma;\hom(U,\bar B_{k-1})) \to
\Omega^\ast(\Sigma;\hom(U,\bar C_k))
\end{equation}
for any finite dimensional vector space $U$. The map \eqref{eqn:partial-pseudo}
can be described pointwise at $b\in \Sigma$ by the operation which
sends $\phi_b \: \Lambda^\ast T_b\Sigma 
\to \hom(U,\bar B_{k-1})$ to the composite
\[
\Lambda^\ast T_b\Sigma 
@> \phi_b >> \hom(U,\bar B_{k-1}) @>\partial_W^{\dagger}(b)\circ{-} >>
\hom(U,\bar C_k)\, ,
\]
where the second displayed map is given by $f\mapsto \partial_W^{\dagger}(b)\circ{f}$.

Similarly, using the modified inner product, we can form the pseudoinverse of 
the inclusion $i\: \bar B_k \to \bar C_k$ to obtain a $0$-from
\begin{equation} \label{eqn:i-dagger}
i_W^\dagger\: \Sigma \to  \hom(\bar C_k,\bar B_k)\, .
\end{equation}

\subsection{Procedure} 
For each integer $\ell \ge 0$ we construct
an $\ell$-form
\[
\cal J^\text{an}_\ell \in \Omega^{\ell}(\Sigma; \hom(\bar C_0,\bar C_{\ell}))\, .
\] 
\noindent Step (1): When $\ell = 0$, define 
\[
\cal J^{\text{\rm an}}_0 \:\Sigma \to \End(\bar C_0,\bar C_0) = 
\]
to be 
$I - i_W^\dagger\: \Sigma \to \hom(\bar C_0,\bar C_0)$, where, $I$ denotes the constant map whose value is the identity.
 \medskip

\noindent Step (2): When $\ell = 1$, note that
\[
d\cal J^{\text{\rm an}}_0 = dI - d i_W^\dagger = -di_W^\dagger
\]
lies in the subspace $\Omega^1(\Sigma; \hom(\bar C_0,\bar B_0))$. Define
\[
\cal J^{\text{\rm an}}_1 := \partial_W^\dagger d \cal J^{\text{\rm an}}_0\, ,
\]
where $\partial_W^\dagger$ is as in \eqref{eqn:partial-pseudo}. Note that
$\cal J^{\text{\rm an}}_1$ lies in $\Omega^1(\Sigma; \hom(\bar C_0,\bar C_1))$.
\medskip 
 
\noindent Step (3):  Assume 
\[
\cal J_{\ell-1}^\text{an} \in \Omega^{\ell-1}(\Sigma; \hom(\bar C_0,\bar C_{\ell-1}))
\]
has been defined for some $\ell$ satisfying $2\le \ell \le q-p$.
We will show how to define $\cal J_{\ell}^\text{an}$.

First apply exterior derivative to $\cal J_{\ell-1}^\text{an}$, giving
\[
d\cal J_{\ell-1}^\text{an} \in 
\Omega^\ell(\Sigma; \hom(\bar C_0,\bar C_{\ell-1}))\, .
\] 

\begin{lem} Assume $2\le \ell \le q-p$. 
Then $d\cal J_{\ell-1}^\text{an}$
lies in the subspace
\[
\Omega^\ell(\Sigma;  \hom(\bar C_0,\bar Z_{\ell-1}))\, .
\]
\end{lem}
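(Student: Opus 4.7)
The plan is to prove this by induction on $\ell$, with the heart of the argument being a continuity-type identity
\[
\partial\,\cal J^\an_{\ell-1} \,=\, d\,\cal J^\an_{\ell-2},
\]
where $\partial$ acts on the $\bar C_{\ell-1}$ slot of $\hom(\bar C_0,\bar C_{\ell-1})$ and commutes with $d$ because it is a constant, $\Sigma$-independent operator. Granted this identity, the conclusion is immediate: $\partial\bigl(d\,\cal J^\an_{\ell-1}\bigr) = d\bigl(\partial\,\cal J^\an_{\ell-1}\bigr) = d^{2}\cal J^\an_{\ell-2}=0$, so $d\,\cal J^\an_{\ell-1}$ takes values in $\bar Z_{\ell-1}$.

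To establish the continuity identity, I would unpack the recursive formula $\cal J^\an_{\ell-1}=\partial^{\dagger}_W\,d\,\cal J^\an_{\ell-2}$. Since $\partial^{\dagger}_W$ is the right pseudoinverse of the surjection $\partial\:\bar C_{\ell-1}\to \bar B_{\ell-2}$, the composition $\partial\circ\partial^{\dagger}_W$ is the identity on $\bar B_{\ell-2}$; hence one gets $\partial\,\cal J^\an_{\ell-1}=d\,\cal J^\an_{\ell-2}$ as soon as $d\,\cal J^\an_{\ell-2}$ is known to take values in $\bar B_{\ell-2}$ (rather than merely in $\bar C_{\ell-2}$). For the base case $\ell=2$, this boundary-valued hypothesis is exactly what was established in Step 2 of the construction: $d\,\cal J^\an_0=-d i^{\dagger}_W\in\Omega^1(\Sigma;\hom(\bar C_0,\bar B_0))$. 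For the inductive step at $\ell\ge 3$, the inductive hypothesis says $d\,\cal J^\an_{\ell-2}$ lies in $\bar Z_{\ell-2}$; to promote this from cycles to boundaries, invoke the gap condition: since $p<(\ell-2)+p<q$ for $3\le\ell\le q-p$, the long exact sequence of $(X^{(q)},X^{(p-1)})$ combined with the vanishing of $\beta_{j}(X)$ for $p<j<q$ forces $H_{\ell-2}(\bar C)=0$, so $\bar Z_{\ell-2}=\bar B_{\ell-2}$ as required.

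The main obstacle is the bookkeeping: the lemma's conclusion at index $\ell-1$ gives a cycle-valued statement, whereas the recursion for $\cal J^\an_\ell$ wants a boundary-valued statement one step earlier, and these two versions are only reconciled by the acyclicity of $\bar C$ in the gap range. I would therefore bundle the two assertions into a single inductive claim ``$d\,\cal J^\an_{k}$ is $\bar B_{k}$-valued for $0\le k\le q-p-1$,'' prove that in one pass, and then deduce the stated lemma as a formal corollary via $\bar B_{\ell-1}\subset\bar Z_{\ell-1}$. Beyond this, the proof is routine diagram chasing in the bicomplex $\Omega^\ast(\Sigma;\bar C_\ast)$, once one records the trivial fact that the exterior derivative on $\Sigma$ commutes with the cellular boundary operator.
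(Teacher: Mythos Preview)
Your proposal is correct and follows essentially the same route as the paper. The paper's proof is terser---it simply asserts that $\partial d\cal J^{\an}_{\ell-1}=d\partial\cal J^{\an}_{\ell-1}=d^2\cal J^{\an}_{\ell-2}=0$ using $\partial\partial^{\dagger}_W=\mathrm{id}$---but this implicitly relies on the same inductive package (Lemma, gap Corollary, then Definition) that you unpack explicitly; your suggestion to bundle the cycle- and boundary-valued claims into a single induction is just a cleaner way to organize what the paper does across Step~(2), the Lemma, and the Corollary.
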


\begin{proof} For any chain $c \in \bar C_\ast$ we write 
$\cal J_{\ell-1}^\text{an}(c) \in 
\Omega^{\ell-1}(\Sigma; \bar C_{\ast + \ell-1})$
for the evaluation of $\cal J_{\ell-1}^\text{an}$ at $c$
arising from the linear transformation
\[
\End(\bar C)_{\ell-1} \to \bar C_{\ast + \ell-1}
\]
 given by $f\mapsto f(c)$.

It will be enough to prove that $\partial d\cal J_{\ell-1}^\text{an}(c) = 0$ for all $c$, 
where the boundary operator $\partial$ in this case
is the one for $\bar C_\ast$. The latter 
assertion follows directly from the fact that the operators $\partial$ and
$d$ commute and the fact that $\partial \partial^{\dagger}_W$ is the identity.
\end{proof}
 
Since $[p,q]$ is a gap, we have $\bar B_{\ell-1} = \bar Z_{\ell-1}$
 for $2 \le \ell \le q-p$. Hence,

 \begin{cor} If $2 \le \ell \le q-p$, then the
  form $d\cal J_{\ell-1}^\text{an}$ lies in the subspace
 \[
\Omega^\ell(\Sigma;  \hom(\bar C,\bar B)_{\ell-1})\, .
\]
\end{cor}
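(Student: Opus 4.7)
The plan is to read off the corollary from the preceding lemma together with the gap condition. The lemma already places $d\cal J^\an_{\ell-1}$ inside $\Omega^\ell(\Sigma;\hom(\bar C_0,\bar Z_{\ell-1}))$, so only two small verifications remain: first, that $\bar Z_{\ell-1}=\bar B_{\ell-1}$ in the required range; and second, that a form with target $\hom(\bar C_0,\bar B_{\ell-1})$ may legitimately be regarded as taking values in $\hom(\bar C,\bar B)_{\ell-1}$.

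For the first verification, I would translate the gap assumption on $X$ into a vanishing statement for $\bar C$. Recalling from \S\ref{subsec:trunc-shift} that $\bar C_j=C_{j+p}(X^{(q)},X^{(p-1)})$, one has $H_j(\bar C)\cong H_{j+p}(X)$ in the range $0<j<q-p$, and the hypothesis that $[p,q]$ is a gap forces these groups to vanish. Hence $\bar Z_j=\bar B_j$ for all $0<j<q-p$. Since $\ell\in[2,q-p]$ gives $\ell-1\in[1,q-p-1]$, this is precisely the identification $\bar Z_{\ell-1}=\bar B_{\ell-1}$ needed.

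For the second verification, note that the inductive construction builds $\cal J^\an_{\ell-1}$ as a form with target $\hom(\bar C_0,\bar C_{\ell-1})$, which embeds naturally into the degree-$(\ell-1)$ part of $\End(\bar C)$ by extending by zero on the summands $\hom(\bar C_i,\bar C_{i+\ell-1})$ with $i\ne 0$. Composing this inclusion with the identification $\bar Z_{\ell-1}=\bar B_{\ell-1}$ from the previous paragraph yields the desired containment in $\Omega^\ell(\Sigma;\hom(\bar C,\bar B)_{\ell-1})$.

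There is no substantive obstacle here; the corollary is essentially a bookkeeping observation, with all of the real content concentrated in the preceding lemma. Its purpose, however, is crucial: it legitimizes the next step of the inductive construction. The pseudoinverse $\partial_W^\dagger$ of \eqref{eqn:partial-pseudo} is defined only on boundaries, and the corollary guarantees that $d\cal J^\an_{\ell-1}$ takes values precisely in the domain where $\partial_W^\dagger$ can be applied to produce $\cal J^\an_\ell:=\partial_W^\dagger d\cal J^\an_{\ell-1}$, thereby advancing the induction.
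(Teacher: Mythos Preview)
Your proposal is correct and follows the same route as the paper: the paper's entire argument is the single sentence ``Since $[p,q]$ is a gap, we have $\bar B_{\ell-1} = \bar Z_{\ell-1}$ for $2 \le \ell \le q-p$,'' and you have simply unpacked this (together with the passage from $\hom(\bar C_0,-)$ to $\hom(\bar C,-)_{\ell-1}$) in more detail.
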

 
Using the corollary, we are entitled to apply 
\[
\partial_W^\dagger\: \Omega^\ell(\Sigma;  \hom(\bar C_0,\bar B_{\ell-1})) 
\to \Omega^\ell(\Sigma;\hom(\bar C_0,\bar C_{\ell}))
\]
to the form $d\cal J_{\ell-1}^\text{an}$.

\begin{defn} \label{defn:JL}
For $2\le \ell \le q-p$ we set
\begin{equation}\label{eqn:JL}
\cal J^{\text{\rm an}}_\ell := \partial^\dagger_W d\cal J_{\ell-1}^\text{an}
 \in \Omega^\ell(\Sigma;  \hom(\bar C_0,\bar C_\ell))\, .
\end{equation}
\end{defn}
 
The above defines $\cal J^{\text{\rm an}}_\ell$ for $0 \le \ell \le q-p$.
For all other $\ell$ we set $\cal J^{\text{\rm an}}_\ell = 0$.
Taking the  sum over $\ell$ then defines the form
\[
\cal J^{\text{\rm an}} = \textstyle \sum_\ell \cal J_{\ell}^\an 
\in \Omega^{\ast}(\Sigma;\End(\bar C)) \, .
\]

\begin{lem} \label{lem:continuity} For all $\ell$ we have
\[
\eth \cal J_{\ell}^\an = d\cal J_{\ell-1}^\an\, .
\]
\end{lem}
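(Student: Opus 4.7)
The plan is to decompose $\eth \cal J^\an_\ell$ into its two constituent parts via the formula $\eth f = \partial f - (-1)^\ell f \partial$ on $\End(\bar C)_\ell$, and verify each piece separately. The construction places $\cal J^\an_\ell$ in the subspace $\hom(\bar C_0, \bar C_\ell) \subset \End(\bar C)_\ell$ (extended by zero on $\bar C_j$ for $j\ne 0$), so the two pieces $\partial \cal J^\an_\ell$ and $\cal J^\an_\ell \partial$ land in the distinct summands $\hom(\bar C_0, \bar C_{\ell - 1})$ and $\hom(\bar C_1, \bar C_\ell)$ respectively, while the right-hand side $d \cal J^\an_{\ell - 1}$ sits entirely in the first summand. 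Thus the lemma reduces to two independent claims: (a) $\partial \cal J^\an_\ell = d\cal J^\an_{\ell-1}$ and (b) $\cal J^\an_\ell \partial = 0$.

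For claim (a), I would argue directly from the defining recursion $\cal J^\an_\ell = \partial^\dagger_W d\cal J^\an_{\ell-1}$. The composition $\partial\partial^\dagger_W$ is the identity on $\bar B_{\ell-1}$ since $\partial^\dagger_W$ is the pseudoinverse of the surjection $\partial \colon \bar C_\ell \to \bar B_{\ell-1}$, and the corollary just before Definition \ref{defn:JL} shows that $d\cal J^\an_{\ell-1}$ takes values in $\hom(\bar C_0, \bar B_{\ell-1})$ (invoking the gap condition $\bar B_{\ell-1} = \bar Z_{\ell-1}$ for $\ell\ge 2$, and holding by direct inspection for $\ell=1$ since $d\cal J^\an_0 = -d i_W^\dagger$ factors through $\bar B_0$). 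The case $\ell = 0$ is automatic because $\partial \cal J^\an_0$ lands in $\bar C_{-1} = 0$, matching $d\cal J^\an_{-1}=0$.

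For claim (b), I would establish the stronger statement $\cal J^\an_\ell|_{\bar B_0} = 0$ for all $\ell\ge 0$ by induction on $\ell$. The base case is the identity $i_W^\dagger \circ i = I_{\bar B_0}$, which gives $\cal J^\an_0|_{\bar B_0} = (I - i_W^\dagger)|_{\bar B_0} = 0$. For the inductive step, the crucial observation is that $\bar B_0 \subset \bar C_0$ is a fixed subspace independent of the weight $W$, so restriction of a form to $\bar B_0$ commutes with both the exterior derivative $d$ and the pointwise application of $\partial^\dagger_W$. Consequently, $\cal J^\an_{\ell-1}|_{\bar B_0}\equiv 0$ implies
\[
\cal J^\an_\ell|_{\bar B_0} \,=\, \partial^\dagger_W d\bigl(\cal J^\an_{\ell-1}|_{\bar B_0}\bigr) \,=\, 0.
\]
Since $\bar B_0$ is precisely the image of $\partial\colon \bar C_1 \to \bar C_0$, this vanishing gives $\cal J^\an_\ell \partial = 0$, completing claim (b).

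The main obstacle is purely bookkeeping: one must carefully track the bigrading on $\End(\bar C)$ and remember that although $\cal J^\an_\ell$ is constructed only on $\bar C_0$, as an element of the full endomorphism complex the formula for $\eth$ produces two a priori distinct terms that must both be accounted for. Once this bigrading is sorted and the inductive claim $\cal J^\an_\ell|_{\bar B_0}=0$ is isolated, both computations are short, and they combine to yield $\eth \cal J^\an_\ell = \partial \cal J^\an_\ell = d\cal J^\an_{\ell-1}$ as required.
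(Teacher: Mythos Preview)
Your proof is correct and follows the same core idea as the paper---using the recursion $\cal J^\an_\ell = \partial^\dagger_W\, d\cal J^\an_{\ell-1}$ together with $\partial\partial^\dagger_W = \mathrm{id}$---but you are more thorough on one point. The paper simply evaluates both sides at a chain $c\in \bar C_0$ and observes that $(\eth\cal J^\an_\ell)(c)=\partial\cal J^\an_\ell(c)$ because $\partial c=0$; this effectively treats the identity inside the subcomplex $\hom(\bar C_0,\bar C_\ast)\subset\End(\bar C)$, where the term $\cal J^\an_\ell\partial$ is automatically zero. You instead work in the full $\End(\bar C)$, isolate the $\hom(\bar C_1,\bar C_\ell)$ component $\cal J^\an_\ell\partial$, and kill it with the clean inductive observation $\cal J^\an_\ell|_{\bar B_0}=0$. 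This extra step is a genuine (if small) strengthening: it confirms the identity holds as an equation in $\End(\bar C)_{\ell-1}$ rather than only after evaluation on $\bar C_0$, which is the form in which axiom A1 is later used in the uniqueness argument.
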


\begin{proof} The statement is obvious for $\ell=0$.
For $\ell > 0$ let $c \in \bar C_0$ be a 0-chain.
we apply the boundary operator $\partial$ for the chain complex $\bar C$
 to the equation 
$\cal J_{\ell}^\an(c) = 
\partial^\dagger_W d\cal J_{\ell-1}^\an(c)$. This results in the identity
\[
\partial \cal J_{\ell}^\an(c) = d\cal J_{\ell-1}^\an(c)\, .
\]
On the other hand 
\[
(\eth \cal J_{\ell}^\an)(c) = \partial \cal J_{\ell}^\an(c)
\]
since $\partial c = 0$.
\end{proof}

\begin{cor} \label{cor:chain-map} 
The form $\cal J^{\text{\rm an}}$ satisfies the identity
\[
\delta \cal J^{\text{\rm an}} \, = \, d\cal J_{q-p}^{\text{\rm an}}\, ,
\]
where $\delta$ is the coboundary operator for the complex
$\Omega^{\ast}(\Sigma;\End(\bar C))$.  
\end{cor}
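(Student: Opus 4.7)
The plan is to exploit the ``continuity equation'' established in Lemma~\ref{lem:continuity}, namely $\eth \cal J^{\text{\rm an}}_\ell = d\cal J^{\text{\rm an}}_{\ell-1}$, and let the total differential on the bicomplex $\Omega^\ast(\Sigma;\End(\bar C))$ produce a telescoping sum that collapses to a single surviving term. The argument is essentially bookkeeping once one has the continuity equation.

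First, I would decompose
\[
\cal J^{\text{\rm an}} \,=\, \sum_{\ell=0}^{q-p}\cal J^{\text{\rm an}}_\ell,
\]
noting that $\cal J^{\text{\rm an}}_\ell = 0$ whenever $\ell<0$ or $\ell > q-p$ by the dimensional constraints on $\bar C$. Since $\cal J^{\text{\rm an}}_\ell$ is homogeneous of bidegree $(\ell,\ell)$ in the bicomplex (form degree $\ell$, chain degree $\ell$), the total differential splits as $\delta = d + (-1)^\ast \eth$ with the sign prescribed by the bidegree, as recorded in \S\ref{sec:prelim}. Applying $\delta$ termwise gives
\[
\delta \cal J^{\text{\rm an}} \,=\, \sum_\ell d\cal J^{\text{\rm an}}_\ell \,\pm\, \sum_\ell \eth\cal J^{\text{\rm an}}_\ell .
\]

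Next, I would invoke Lemma~\ref{lem:continuity} to substitute $\eth \cal J^{\text{\rm an}}_\ell = d\cal J^{\text{\rm an}}_{\ell-1}$ in the second sum. Each resulting term $d\cal J^{\text{\rm an}}_{\ell-1}$ then matches (up to sign) a term $d\cal J^{\text{\rm an}}_{\ell-1}$ coming from the first sum. Collecting the contributions to each $d\cal J^{\text{\rm an}}_m$ for $m = 0,\dots,q-p-1$ and invoking the sign convention shows that the internal terms cancel in pairs, while the term $d\cal J^{\text{\rm an}}_{q-p}$ at the top has no partner (since $\cal J^{\text{\rm an}}_{q-p+1} = 0$) and the boundary term $d\cal J^{\text{\rm an}}_{-1} = 0$ produces no contribution. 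What remains is precisely
\[
\delta \cal J^{\text{\rm an}} \,=\, d\cal J^{\text{\rm an}}_{q-p}.
\]

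The only mildly delicate point is verifying that the signs in the total bicomplex differential are synchronized with the signs implicit in Lemma~\ref{lem:continuity} so that the telescoping actually goes through (as opposed to yielding doubled even or doubled odd terms). This reduces to a direct check using the convention $\delta = d + (-1)^\ast \eth$ stated in the preliminaries, combined with the observation that a $(-1)^\ell$ in front of $\eth \cal J^{\text{\rm an}}_\ell$ matches the $(-1)^{\ell-1}$ arising from applying $d$ to $\cal J^{\text{\rm an}}_{\ell-1}$ after the reindexing $m=\ell-1$. Once the signs are aligned, the conclusion is immediate.
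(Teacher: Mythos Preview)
Your proposal is correct and follows essentially the same route as the paper's proof: decompose $\cal J^{\text{\rm an}}$ into its homogeneous pieces, apply $\delta = d + (-1)^{\ast}\eth$ termwise, substitute the continuity equation $\eth \cal J^{\text{\rm an}}_\ell = d\cal J^{\text{\rm an}}_{\ell-1}$, and observe the resulting telescoping leaves only $d\cal J^{\text{\rm an}}_{q-p}$. Your final remark about the sign bookkeeping is the one place where you are slightly imprecise (there is no $(-1)^{\ell-1}$ introduced by applying $d$ itself), but the paper's own proof is equally terse on this point and the cancellation is routine once the bicomplex convention is fixed.
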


\begin{proof} 
We have 
\begin{align*}
\delta \cal J^{\text{\rm an}} &= \sum_\ell \delta \cal J_\ell^\an\, ,\\
& = \sum_{\ell} d\cal J^{\text{\rm an}}_\ell +  (-1)^{\ell} d\cal J^{\text{\rm an}}_{\ell-1} \, ,\\
& =  d\cal J^{\text{\rm an}}_{q-p}\, ,
\end{align*}
where the second line of the display uses the definition of $\delta$ as well
as the identity  $\eth \cal J^{\text{\rm an}}_\ell = d\cal J^{\text{\rm an}}_{\ell-1}$.
\end{proof}



\begin{proof}[Proof of Theorem \ref{bigthm:uniqueness}] To prove the 
existence part of Theorem \ref{bigthm:uniqueness}  we need to show
that the forms $\cal J_\ell^{\text{\rm an}}$ that were constructed above satisfy 
 axioms A1-A3.
 Axiom A1 was established in Lemma \ref{lem:continuity}. Axiom A2 holds
 when $\ell = 0$ since $\cal J_0^{\text{\rm an}}= I - i_W^\dagger$ is
  the orthogonal projection onto $\bar D_{W0}$ in the modified inner product.
  For $\ell > 0$ axiom A2 follows from the definition
 \[
 \cal J_\ell^{\text{\rm an}} = \partial^\dagger_W d\cal J_{\ell-1}^{\text{\rm an}}
 \]
 and the formula for the pseudoinverse of $\partial$ given by
 \[
 \partial_W^\dagger := \partial^\ast_W (\partial\partial^\ast_W)^{-1}\, .
 \]
It follows that $\cal J_\ell^{\text{\rm an}}$ lies in the subspace
 $\Omega^\ell(\Sigma;\hom(\bar C_0, \bar D_{W\ell}))$.
The verification of axiom A3 is trivial.

 We next show that the axioms uniquely determine the forms $\cal J_\ell^{\text{\rm an}}$. Set $\Phi_\ell = \cal J_{\ell}^{\text{\rm an}}$ and suppose that $\Psi_\ell$ is another collection of forms satisfying the axioms.

Set $\alpha_\ell = \Phi_\ell - \Psi_\ell$. It suffices to prove that
$\alpha_\ell$ is trivial.
The forms $\alpha_\ell$ satisfy axiom A2 x the axioms
 \begin{enumerate}
 \item[(B1):] $\eth \alpha_\ell = 0$;
 \item[(B3):] $\alpha_0\: \Sigma \to \End(H_0(\bar C))$ is trivial.
 \end{enumerate}
For the argument we rename Axiom A2 by B2. 
 
For $\ell =0$ 
 axiom B1 implies
 that $\alpha_0\: \Sigma \to \End(\bar C)_0$ takes values in chain maps.
 Project $\alpha_0$ onto the $j$th component of $\End(\bar C)_0$
 to obtain a map $\alpha_{0,j}\:\Sigma \to \End(\bar C_j)$ and let $b\in \Sigma$ be
 any point. Define $f_{0,j}\: \bar C_j \to \bar C_j$ by $f_{0,j} = \alpha_{0,j}(b)$.
 
Axiom B2 shows that $f_{0,0}$
 factors as
 \[
 \bar C_0 @>>> \bar D_{W0} \subset \bar C_0
 \]
 where $\bar D_{W0}$ is the orthogonal complement of $B_0$ in the modified
 inner product. Axiom B3 implies that the first map in the composite is trivial. 
 If follows that $f_{0,0}$ is trivial. 
 A similar argument shows that $f_{0,j}$ is trivial for $j >0$. Consequently,
 $\alpha_0 = 0$.
 This establishes the basis step of the induction on $\ell$.

 For the inductive step, 
 assume
 $\alpha_j = 0$ for $0 \le j \le \ell-1$.  
 Let $f_{\ell,k}\: \Bar C_k \to \bar C_{\ell+k}$ be the $k$-th component of
 the evaluation of $\alpha_\ell$ at any vector
   $v\in  \Lambda^\ell T_b\Sigma$ for some $b\in \Sigma$. In other words,
   $\alpha_\ell(v) = \oplus_k f_{\ell,k}$. Then
 Axiom B1 implies that the diagram
 \[
 \xymatrix{
 \bar C_k \ar[r]^{f_{\ell,k}} \ar[d]_{\partial} & \bar C_{\ell+k} \ar[d]^{\partial} \\
\bar C_{k-1} \ar[r]_{f_{\ell, k-1}} & \bar C_{\ell+k-1}
}
\]
commutes up to sign. It follows that $\partial f_{\ell,k} = 
\pm f_{\ell,k-1}\partial$.
We next male subsidiary induction in $k$.
An argument like the one in the previous paragraph shows that $f_{\ell,0}$ is trivial. Assume that $f_{\ell,k-1}$ is trivial. Then by axiom B2, the image of $f_{\ell,k}$ lies in $\bar D_{W,\ell+k}\cap \bar Z_{\ell+k} = 0$. It follows that $f_{\ell,k}$ is trivial, establishing the inductive step indexed for the pair  
$(\ell,k)$. It follows that $\alpha_\ell =0$, completing the induction in $\ell$.
\end{proof}

\subsection{An explicit formula} 
Let 0-forms $\alpha_j \in\Omega^0(\Sigma;\End(\bar C)_0)$
be defined by 
\[
\alpha_0 := I - i_W^\dagger,\qquad \alpha_j \,\, :=\,\,   
\partial_W^\dagger \quad \text{ for }\,\,  0 < j \le q-p\, .
\]
Set 
\[
\beta_j := d\alpha_j\, , \qquad 0 \le j \le q-p\, .
\]
Then $\beta_j$ is a $1$-form.

\begin{cor}\label{lem:explicit-analytic} 
for $0 \le \ell \le q-p$, the following identity is satisfied:
\[
\cal J^{\an}_\ell =
\begin{cases}
\alpha_0 \,            \quad & \text{\rm if } \ell = 0\, , \\
   \alpha_{\ell} \wedge \beta_{\ell-1}\wedge\cdots \wedge 
\beta_0 &\text{\rm otherwise.}
\end{cases}
\]
\end{cor}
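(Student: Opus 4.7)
The plan is to prove the identity by induction on $\ell$. The base case $\ell = 0$ is the very definition of $\cal J^\an_0$ in Step (1), namely $\cal J^\an_0 = I - i_W^\dagger = \alpha_0$, so there is nothing to check there.

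For the inductive step, assume $1 \le \ell \le q-p$ and that the formula holds for $\ell - 1$. The crucial observation is that each $\beta_j = d\alpha_j$ is itself $d$-closed, since $d^2 = 0$. Consequently, by repeated application of the graded Leibniz rule, the wedge $\beta_{\ell-2}\wedge \cdots \wedge \beta_0$ is closed. Applying $d$ to the inductive formula for $\cal J^\an_{\ell-1}$ and using that $\alpha_{\ell-1}$ is a 0-form, I get
\[
d\cal J^\an_{\ell-1} \;=\; d\alpha_{\ell-1}\wedge \beta_{\ell-2}\wedge\cdots\wedge \beta_0 \;=\; \beta_{\ell-1}\wedge \beta_{\ell-2}\wedge \cdots \wedge \beta_0,
\]
with the other Leibniz term vanishing because its factor $d(\beta_{\ell-2}\wedge\cdots\wedge\beta_0)$ is zero. (When $\ell = 1$ the product over $j < \ell-1$ is empty and this reduces to $d\cal J^\an_0 = d\alpha_0 = \beta_0$.)

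Now I need to interpret the operator $\partial_W^\dagger$ from \eqref{eqn:partial-pseudo} as left multiplication in the graded algebra $\Omega^\ast(\Sigma;\End(\bar C))$, whose product is defined by wedge on the $\Omega^\ast(\Sigma)$ factor and composition on the $\End(\bar C)$ factor. Unwinding the pointwise description below \eqref{eqn:partial-pseudo}, the action of $\partial_W^\dagger$ on a form $\phi$ is exactly postcomposition by the $0$-form $\alpha_\ell = \partial_W^\dagger$, which in this algebra equals $\alpha_\ell \wedge \phi$. Applying this to $\phi = d\cal J^\an_{\ell-1}$ and combining with the computation above yields
\[
\cal J^\an_\ell \;=\; \partial_W^\dagger \, d\cal J^\an_{\ell-1} \;=\; \alpha_\ell \wedge \beta_{\ell-1}\wedge \cdots \wedge \beta_0,
\]
which closes the induction.

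There is really no serious obstacle; the only mildly technical point is the bookkeeping step of recognizing that the action of $\partial_W^\dagger$ defined in \eqref{eqn:partial-pseudo} coincides with left wedge multiplication by $\alpha_\ell$ in the natural product structure on $\Omega^\ast(\Sigma;\End(\bar C))$. Once that identification is made, the formula follows mechanically from $d^2 = 0$ and Leibniz.
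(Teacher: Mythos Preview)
Your proof is correct. It proceeds by direct induction, unwinding the recursive construction $\cal J^{\an}_\ell = \partial_W^\dagger\, d\cal J^{\an}_{\ell-1}$ from Steps (1)--(3) and Definition~\ref{defn:JL}, and uses only the Leibniz rule together with $d^2=0$ to identify $d\cal J^{\an}_{\ell-1}$ with $\beta_{\ell-1}\wedge\cdots\wedge\beta_0$. The identification of the action in \eqref{eqn:partial-pseudo} with left wedge multiplication by the $0$-form $\alpha_\ell$ in $\Omega^\ast(\Sigma;\End(\bar C))$ is exactly right, since at each $b\in\Sigma$ the map is postcomposition by $\partial_W^\dagger(b)$.

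The paper takes a different route: rather than deriving the formula from the recursive construction, it observes that the right-hand side satisfies axioms A1--A3 and then invokes the uniqueness half of Theorem~\ref{bigthm:uniqueness}. Your argument is more elementary and self-contained, since it does not appeal to the uniqueness theorem at all; in effect you re-prove the existence half of Theorem~\ref{bigthm:uniqueness} in closed form. The paper's approach, by contrast, emphasizes the axiomatic characterization and is shorter once Theorem~\ref{bigthm:uniqueness} is in hand. Note, incidentally, that verifying axiom A1 for the explicit formula requires essentially the same Leibniz computation you carry out, so the two arguments share the same computational core even though they are logically organized differently.
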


\begin{proof} The forms
on the right side
satisfy axioms A1-A3.  
Consequently, the result follows from
Theorem \ref{bigthm:uniqueness}.
\end{proof}

 \section{The Quantization Theorem} \label{sec:quantize}
 
 In this section we carefully formulate and prove 
Theorem \ref{bigthm:quantize}. In what follows we assume $\Bbb F = \Bbb R$.

 \subsection{An analytical  cochain}
Let 
\[
R \subset I_{\Sigma}
\] 
be a closed subset. 
The following are some examples of interest:
\begin{enumerate}
\item $R = R_\Sigma$ is the poset of {\it smooth} small singular simplices.
\item  $R = R_\Sigma^{q-p}$ is the poset of smooth small singular simplices
of dimension at most $q-p$.
\item $R = R_\sigma$ consists of a smooth small singular simplex
$\sigma \: \Delta^j \to \Sigma$ together with all of its faces, where $j \le q-p$.
\end{enumerate}

Let $R_\ast$ be the chain complex over $\Bbb R$ that is freely generated 
in each degree by the objects of $R$. Then integration along the 
elements of $R$ defines a map of
cochain complexes
\begin{align} \label{eqn:Stokes-map}
 \Omega^\ast(\Sigma;\End(\bar C)) & @>>> \hom(R_\ast,\End(\bar C))\, , \\
\notag \phi\qquad  &\mapsto \quad (\sigma \mapsto \textstyle\int_{\Delta^{j_{\sigma}}} \sigma^\ast \phi)\, ,
\end{align}
called the {\it Stokes map}.
It is a quasi-isomorphism when $R= R_{\Sigma}$.

\begin{defn}\label{defn:JT}  The {\it restricted analytical hypercurrent cochain}
\[
\cal J^\an_{|R}  \in \hom(R_\ast,\End(\bar C))
\]
is given by the image of the analytical hypercurrent current form $\cal J^\an$ with respect to the Stokes map 
\eqref{eqn:Stokes-map}.
\end{defn}

\begin{lem} \label{lem:cocycle-restrict} If $R = R_\Sigma^{q-p}$ or $R = R_\sigma$, then 
$\cal J^\an_{|R}$ is a cocycle.
\end{lem}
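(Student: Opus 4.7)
The plan is to verify directly that $(\delta \cal J^\an_{|R})(\sigma) = 0$ for every $\sigma \in R$, by combining the continuity equation from Lemma \ref{lem:continuity} with Stokes' theorem.

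First I would unpack the Stokes map. Since $\cal J^\an = \sum_\ell \cal J^\an_\ell$ with $\cal J^\an_\ell \in \Omega^\ell(\Sigma; \End(\bar C)_\ell)$, only the component of form-degree $j_\sigma$ can contribute to the integral over $\Delta^{j_\sigma}$, so
\[
\cal J^\an_{|R}(\sigma) \,=\, \int_{\Delta^{j_\sigma}} \sigma^* \cal J^\an_{j_\sigma}.
\]
Expanding the differential on $\hom(R_\ast, \End(\bar C))$ in the standard way, the coboundary at $\sigma$ has the form
\[
(\delta \cal J^\an_{|R})(\sigma) \,=\, \eth\bigl(\cal J^\an_{|R}(\sigma)\bigr) \,\pm\, \cal J^\an_{|R}(\partial \sigma),
\]
with signs determined by the grading conventions of \S\ref{sec:prelim}.

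The crux of the calculation is the continuity equation $\eth \cal J^\an_\ell = d \cal J^\an_{\ell - 1}$ of Lemma \ref{lem:continuity}. Because $\eth$ commutes with pullback and integration over $\Delta^{j_\sigma}$, I would write
\[
\eth\bigl(\cal J^\an_{|R}(\sigma)\bigr) \,=\, \int_{\Delta^{j_\sigma}} \sigma^*(\eth \cal J^\an_{j_\sigma}) \,=\, \int_{\Delta^{j_\sigma}} \sigma^*(d\cal J^\an_{j_\sigma - 1}) \,=\, \int_{\Delta^{j_\sigma}} d\bigl(\sigma^* \cal J^\an_{j_\sigma - 1}\bigr).
\]
Now apply Stokes' theorem on $\Delta^{j_\sigma}$, writing $\partial \Delta^{j_\sigma}$ as the alternating sum of its codimension-one faces, to identify the right-hand side with $\pm \cal J^\an_{|R}(\partial \sigma)$. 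Matching this against the sign in the hom-complex differential cancels the two terms, giving $(\delta \cal J^\an_{|R})(\sigma) = 0$.

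Finally I would explain the role of the hypothesis on $R$. Both $R_\Sigma^{q-p}$ and $R_\sigma$ (with $\dim \sigma \le q-p$) consist of small simplices of dimension at most $q-p$, so the continuity equation applies to every $\tau \in R$. The argument breaks on the first simplex $\tau$ with $j_\tau = q-p+1$: there $\cal J^\an_{|R}(\tau) = 0$ trivially since $\cal J^\an_{q-p+1} = 0$, but by Stokes the boundary term is $\int_{\Delta^{q-p+1}} \tau^* d \cal J^\an_{q-p}$, which by Corollary \ref{cor:chain-map} equals $\int_{\Delta^{q-p+1}} \tau^* (\delta \cal J^\an)$ and has no reason to vanish. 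This is exactly the obstruction that prevents extending the cocycle property to $R = R_\Sigma$. The main technical nuisance is bookkeeping of signs in Stokes' theorem versus the $\eth$--$\partial$ signs in $\hom(R_\ast,\End(\bar C))$; I would sanity-check this by first handling the case $j_\sigma = 1$ by hand before invoking it in general.
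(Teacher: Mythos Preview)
Your argument is correct and, at bottom, uses the same ingredients as the paper: the continuity equation, Stokes' theorem, and the dimension bound $j_\sigma\le q-p$. The paper packages these a bit differently: it first invokes Corollary~\ref{cor:chain-map} to get $\delta\cal J^\an = d\cal J^\an_{q-p}$ in $\Omega^\ast(\Sigma;\End(\bar C))$, then uses that the Stokes map \eqref{eqn:Stokes-map} is a map of cochain complexes, so $\delta(\cal J^\an_{|R})$ is the image of $d\cal J^\an_{q-p}$; finally it notes that a $(q{-}p{+}1)$-form pulls back to zero along any simplex of dimension $\le q-p$. Your version unpacks both of these packaged facts---you re-derive the chain-map property of the Stokes map by hand via Stokes' theorem and use Lemma~\ref{lem:continuity} directly rather than its consequence Corollary~\ref{cor:chain-map}. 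The paper's route is shorter and avoids the sign bookkeeping you flag as a nuisance; your route has the virtue of making explicit where the dimension hypothesis enters.
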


\begin{proof} By Corollary  \ref{cor:chain-map}, 
\[
\delta \cal J^\an =
d\cal J^\an_{q-p}  \in \Omega^{q-p+1}(\Sigma;\hom(\bar C_0,\bar C_{q-p}))\, .
\] 
The result then follows by observing that the
 pullback of $d\cal J^\an_{q-p}$
 along a small smooth singular simplex of dimension $\le q-p$ 
 is  trivial for dimensional reasons.
 \end{proof}

Let $\cal K^\ast(R)$ denote the right side of \eqref{eqn:Stokes-map}.
Then there is a canonical isomorphism of vector spaces
\[
\hom_{\text{ho\,}{\cal C}^{R}}(\chi,\chi) \cong H^0(\cal K^\ast(R))\, ,
\] 
where the left side denotes  hom taken in the homotopy category of
$\Ch^R$ (cf.~ Definition \ref{defn:posets})  and $\chi$ is the constant functor with value $\bar C$.

\begin{defn}\label{defn:lengthy} (1). Let $\cal U_R$ be the vector space of $0$-cochains of the complex
$\cal K^\ast(R)$. Let 
\[
\cal B_R \subset \cal U_R \qquad \text{ and } \qquad \cal Z_R \subset \cal U_R
\]
be the space of $0$-coboundaries and  0-cocycles respectively. 
\smallskip

\noindent (2).
Set
\[
\cal U^\sharp_R  := \cal U_R/\cal B_R\, .
\]
For an affine subspace $V\subset \cal U_R$ we let $V^\sharp \subset \cal U^\sharp_R$
be the image of $V$ under the quotient map $\cal U_R \to \cal U^\sharp_R$.
When $V= \cal Z_R$ we set
\[
\cal H_R := \cal Z_R^\sharp = \cal Z_R/\cal B_R\, .
\]
\smallskip

\noindent (3). For $\xi \in \End(H_0(\bar C))$, let
\[
\cal Z_R(\xi) \subset \cal Z_R
\]
be the subspace of 0-cocycles $c\: R_\ast \to \End(\bar C)$
such for $\sigma \in R$, the chain map $c(\sigma)\: \bar C \to \bar C$
induces $\xi$ in homology in degree 0, and is trivial in homology
in higher degrees. We set
\[
\cal H_R(\xi) := (\cal Z_R(\xi))^\sharp\, ,
\]
i.e., the image of $\cal Z_R(\xi)$ under the projection $\cal Z_R \to \cal Z_R^\sharp$. 
\smallskip

\noindent (4). Let
\[
\cal T_R \subset \cal U_R
\] be the subspace of $0$-cochains
$c\: R_\ast \to \End(\bar C)_\ast$ such that if $\sigma\in R$, then $c(\sigma)$ lies in $\hom(\bar C,\tau(\sigma))$, where
$\tau(\sigma) = \bar C(T_\sigma)$ is the value of the tree functor at 
the small simplex $\sigma$. 
\smallskip

\noindent (5). Let 
\[
\cal T_R^\sharp \subset \cal U^\sharp_R
\] denote the image of $\cal T_R$ under the quotient
$\cal U_R \to \cal U^\sharp_R$.
\end{defn}

Observe that $\cal H_R(\xi) \subset \cal U^\sharp$
is an affine subspace, whereas  $\cal T_R^\sharp\subset \cal U^\sharp$ is a vector subspace.

\begin{prop} \label{prop:exist-unique-thc-reform}
Let $R$ be any of the posets of interest and suppose $\xi\in \End(H_0(\bar C))$
is the identity. Then the intersection
\[
\cal H_R(\xi)\cap \cal T_R^\sharp
\]
consists of a single element, namely, the affine subspace 
\[
 \cal J_{|R} + \cal B_R
\]
associated with the restricted topological hypercurrent chain map $\cal J_{|R}$. 
\end{prop}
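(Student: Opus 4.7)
The plan is to reframe the statement as a uniqueness claim about pre-hypercurrent natural transformations in the restricted functor category $\Ch^R$ and then to invoke Corollary \ref{cor:exist-unique-variant}.

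\textbf{Existence.} I first verify that the chain map $\cal J_{|R}$ itself lies in $\cal Z_R(\xi) \cap \cal T_R$, so that its coset is a point of $\cal H_R(\xi) \cap \cal T_R^\sharp$. It is a cocycle because it is a chain map (Corollary \ref{cor:exist-unique-variant}). Because $\cal J_{|R}$ is built as the colimit of the composite $\chi^c \to \tau \to \chi$, for each small simplex $\sigma$ the value $\cal J_{|R}(\sigma)$ factors through the inclusion $\tau(\sigma) \hookrightarrow \bar C$, hence $\cal J_{|R} \in \cal T_R$. The initial-condition clause of Definition \ref{defn:axiom-hypercurrent} forces $\cal J_{|R}(\sigma_0)$ to induce $\xi = \mathrm{id}$ on $H_0(\bar C)$ for every $0$-simplex $\sigma_0$; triviality on $H_j(\bar C)$ for $j > 0$ is automatic, since any map factoring through $\tau(\sigma)$ induces the zero map on positive-degree homology by positive acyclicity (Lemma \ref{lem:conditions}).

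\textbf{Uniqueness.} Given $[c] \in \cal H_R(\xi) \cap \cal T_R^\sharp$, I may adjust the representative by a coboundary to arrange that $c \in \cal Z_R(\xi) \cap \cal T_R$; the task reduces to proving $c - \cal J_{|R} \in \cal B_R$. Both $c$ and $\cal J_{|R}$ lift to natural transformations $\chi^c_{|R} \to \tau_{|R}$ in $\Ch^R$ whose composition with $\mathfrak v$ recovers the given cocycles, and the condition that each induces $\xi = \mathrm{id}$ on $H_0$ makes these lifts into pre-hypercurrent maps in the sense of Definition \ref{defn:axiom-hypercurrent}. By Corollary \ref{cor:exist-unique-variant}, any two restricted pre-hypercurrent maps are chain homotopic as natural transformations. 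Applying the colimit functor and Lemma \ref{lem:constant} converts this natural chain homotopy into an actual coboundary in $\cal U_R$, yielding $c - \cal J_{|R} \in \cal B_R$.

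\textbf{Main obstacle.} The technical heart is the lift of a cocycle in $\cal T_R \cap \cal Z_R(\xi)$ to a natural transformation $\chi^c_{|R} \to \tau_{|R}$. Concretely, the value of such a natural transformation on each $\chi^c(\sigma) = \bar C \otimes \Delta^{j_\sigma}_*$ is determined on the top cell by $c(\sigma)$ and on the lower sub-simplices by the values $c(\partial_I \sigma)$ at the faces of $\sigma$; the cocycle condition $\delta c = 0$ is precisely what ensures these data patch together into a genuine chain map of natural transformations. The cofibrancy of $\chi^c$ in $\Ch^R$ (Lemma \ref{lem:conditions}) combined with the positive acyclicity of $\tau$ then supplies, via Proposition \ref{prop:lift} and Corollary \ref{cor:lift}, uniqueness of such lifts up to natural chain homotopy, thereby closing the argument.
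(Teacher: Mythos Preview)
Your proposal is correct and follows the same approach as the paper, which proves the proposition in a single line by declaring it ``a reformulation of Corollary \ref{cor:exist-unique-variant}'' after unravelling the definitions; you have simply carried out that unravelling explicitly. One small remark: in your uniqueness step, no adjustment by a coboundary is actually needed---if $c_2\in\cal T_R$ represents a class in $\cal H_R(\xi)$, then $c_2$ differs from some $c_1\in\cal Z_R(\xi)$ by a coboundary $\delta b$, hence $c_2$ is itself a cocycle, and for a $0$-simplex $\sigma$ the difference $c_1(\sigma)-c_2(\sigma)=\eth(b(\sigma))$ is null-homotopic, so $c_2(\sigma)$ already induces $\xi$ on $H_0$; thus $c_2\in\cal Z_R(\xi)\cap\cal T_R$ directly.
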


\begin{proof} An unravelling of the
definitions shows the assertion to be a reformulation of Corollary  
\ref{cor:exist-unique-variant}.
\end{proof}

\subsection{Topologizing  $\cal U_R^\sharp$} The
 affine spaces considered above are subspaces of
$\cal U^\sharp_R$, the latter which is typically infinite dimensional when
$R = R_\Sigma$ or $R^{q-p}_\Sigma$. We will topologize
 $\cal U^\sharp_R$ as a quotient of 
$\cal U_R$. We will topologize the latter as subspace of the product
\[
\cal U_R \subset \prod_{\sigma\in R} \End(\bar C)_{j_{\sigma}}\, .
\]
We are reduced to describing a topology on the displayed product.
Note that the real vector space $\End(\bar C)_{j_{\sigma}}$ is finite
dimensional. We topologize the displayed  product
using the box topology. 

Let
$\pi\: \cal U_R\to \cal U^\sharp_R$ denote the quotient map. 
The following is a trivial consequence of the definitions.

\begin{lem} Let $\hat x\: \Bbb N \to  \cal U_R$ be a sequence. 
Then the composite sequence
\[
x\: \Bbb N @>\hat x>>  \cal U_R @>\pi >> \cal U^\sharp_R
\]
converges to $y \in \cal U^\sharp_R$ if and only if
 for 
any open neighborhood $V$ of $y$, there is an 
$N \ge 0$ such that $\hat x_n$ is contained in $\hat V := \pi^{-1}(V)$ for $n > N$.
\end{lem}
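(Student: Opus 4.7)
The plan is to unwind the definition of the quotient topology on $\cal U^\sharp_R = \cal U_R/\cal B_R$: by construction a subset $V \subset \cal U^\sharp_R$ is open precisely when $\hat V := \pi^{-1}(V)$ is open in $\cal U_R$, so the open neighborhoods of $y$ correspond bijectively to the $\pi$-saturated open subsets of $\cal U_R$ containing the fibre $\pi^{-1}(y)$.

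For the forward direction, if $x_n := \pi(\hat x_n)$ converges to $y$ in $\cal U^\sharp_R$, then for any open neighborhood $V\ni y$ the definition of convergence supplies an integer $N$ with $\pi(\hat x_n)\in V$ for $n > N$, which is literally the statement $\hat x_n\in \hat V$ for $n > N$. Conversely, assuming $\hat x_n\in \hat V$ eventually for every open $V\ni y$, applying $\pi$ yields $\pi(\hat x_n)\in \pi(\hat V)\subset V$ eventually, so $x_n\to y$ in $\cal U^\sharp_R$.

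Both directions therefore collapse to the tautology $\hat x_n\in\pi^{-1}(V)\iff \pi(\hat x_n)\in V$; there is no genuine obstacle, which is consistent with the paper's own description of the lemma as a trivial consequence of the definitions. The only care needed is to confirm that ``topologized as a quotient of $\cal U_R$'' is to be read in the standard topological sense (so that $\pi$ is by fiat continuous and the characterization of open sets above applies), which is the reading fixed in the paragraph preceding the lemma.
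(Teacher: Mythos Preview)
Your argument is correct and matches the paper's approach: the paper offers no proof beyond the remark that the lemma ``is a trivial consequence of the definitions,'' and your unwinding of the quotient topology via the tautology $\hat x_n\in\pi^{-1}(V)\iff \pi(\hat x_n)\in V$ is exactly that triviality made explicit.
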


\subsection{The low temperature limit}
The form ${\cal J}^\an$ depends in a crucial way on the choice of smooth 
protocol
$\gamma\: \Sigma \to \cal M_{p,q}$.  If $\beta\in \Bbb R_+$, let
$\gamma_\beta\: \Sigma \to \cal M_{p,q}$ be the the protocol given by
\[
\gamma_\beta(x) := \beta\gamma(x)\, ,
\]
where the right side is given by scalar multiplication.
By replacing $\gamma$ with $\gamma_\beta$ in the construction
of the analytical hypercurrent we similarly obtain
\[
{\cal J}^{\an,\beta} \in \Omega^\ast(\Sigma;\End(\bar C)) \, .
\]
If $R$ is one of the posets of interest, then the corresponding restricted analytical hypercurrent cochain is denoted
by 
\[
{\cal J}^{\an,\beta}_{|R}\, .
\]
By mild abuse of notation we identify the latter with its coset 
${\cal J}^{\an,\beta}_{|R} + \cal B_R$ in 
$\cal U^\sharp$.

\begin{thm}[Quantization] \label{thm:quantization} 
Let $R \subset R_{\Sigma}^{q-p}$ be closed.
Then in $\cal U^\sharp$ we have
\[
\lim_{\beta \to \infty} {\cal J}^{\an,\beta}_{|R} = {\cal J}_{|R}\, ,
\]
where the limit is taken in the topology described above.
\end{thm}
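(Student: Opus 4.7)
The plan is to establish the convergence by first performing pointwise asymptotic analysis of the analytical form $\cal J^{\an,\beta}$ on each stratum $\Sigma_k$, then transferring to the cochain level via the Stokes map, and finally invoking the uniqueness statement in Proposition \ref{prop:exist-unique-thc-reform}. Since $R \subset R_\Sigma^{q-p}$, every $\sigma \in R$ is small, so $\sigma(\Delta^{j_\sigma})$ is a compact subset of a single stratum $\Sigma_{k}$; this uniformity is what will let pointwise asymptotics pass through the integrals defining the cochain.

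The first key step, and the main technical one, is the asymptotic analysis of the modified-inner-product pseudoinverses. Fix $b \in \Sigma_k$. The modified inner product weights the basis element $x \in X_{k+p}$ by $e^{\beta W_{k+p}(b)(x)}$; as $\beta \to \infty$, this is dominated by the cells selected by the greedy algorithm, i.e.\ by the preferred $k$-tree $T_b$. Using the explicit formulas $i_W^\dagger = (i^\ast i)^{-1} i^\ast$ and $\partial_W^\dagger = \partial_W^\ast (\partial \partial_W^\ast)^{-1}$ and a careful Laplace-type estimate, I expect to obtain
\[
\lim_{\beta \to \infty} i_{\beta W}^\dagger(b) = i_{T_b}^\dagger, \qquad \lim_{\beta \to \infty} \partial_{\beta W}^\dagger(b) = \partial_{T_b}^\dagger,
\]
where the right-hand operators are the pseudoinverses computed within $\bar C(T_b) = \tau(\sigma)$. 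The off-tree contributions should decay exponentially in $\beta$, with the decay uniform on compact subsets of $\Sigma_k$ (and likewise for all derivatives in $b$). Assuming this, the explicit formula in Corollary \ref{lem:explicit-analytic} yields a pointwise smooth limit
\[
\cal J^{\an,\infty}_\ell(b) \;=\; \alpha_\ell^\infty \wedge \beta_{\ell-1}^\infty \wedge \cdots \wedge \beta_0^\infty
\]
which, at each $b \in \Sigma_k$, factors through $\hom(\bar C,\bar C(T_b))_\ell = \hom(\bar C,\tau(\sigma))_\ell$.

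Next, integration along a simplex $\sigma \in R$ uses only values of $\cal J^{\an,\beta}$ on the compact set $\sigma(\Delta^{j_\sigma}) \subset \Sigma_k$, so by dominated convergence the component $\cal J^{\an,\beta}_{|R}(\sigma) \in \End(\bar C)_{j_\sigma}$ converges to $\cal J^{\an,\infty}_{|R}(\sigma)$. This is exactly convergence in the box topology on $\cal U_R$, and hence in $\cal U^\sharp$ after passing through $\pi$. By the previous paragraph, $\cal J^{\an,\infty}_{|R}(\sigma)$ lands in $\hom(\bar C,\tau(\sigma))$, so $\cal J^{\an,\infty}_{|R} \in \cal T_R$, i.e.\ its class lies in $\cal T_R^\sharp$. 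Moreover, the cocycle condition $\delta\cal J^{\an,\beta}_{|R} = 0$ (Lemma \ref{lem:cocycle-restrict}) is preserved in the limit, and Axiom A3 (together with the axiomatic characterization of Theorem \ref{bigthm:uniqueness}) forces the induced map on $H_0(\bar C)$ to be the identity, while axiom A2 together with the gap condition forces the induced map on $H_j(\bar C)$ to vanish for $j > 0$. Thus the class of $\cal J^{\an,\infty}_{|R}$ lies in $\cal H_R(\mathrm{id}) \cap \cal T_R^\sharp$.

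Proposition \ref{prop:exist-unique-thc-reform} then finishes the proof: the intersection $\cal H_R(\mathrm{id}) \cap \cal T_R^\sharp$ consists of the single class $\cal J_{|R} + \cal B_R$, so $\lim_{\beta \to \infty} \cal J^{\an,\beta}_{|R} = \cal J_{|R}$ in $\cal U^\sharp$. The hard part is unambiguously the first step: showing that the exponentially weighted pseudoinverses localize onto the greedy tree with uniform control on derivatives, and that one may exchange the exterior derivative $d$ with the limit $\beta \to \infty$ on compact subsets of each stratum. Once this uniform asymptotic is established, the rest of the argument is a formal consequence of the uniqueness reformulation of the topological hypercurrent.
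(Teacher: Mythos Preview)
Your overall architecture — reduce to a single simplex, show the analytical cochain lands in the tree subspace in the limit, then invoke the uniqueness in Proposition~\ref{prop:exist-unique-thc-reform} — is close to the paper's, but the analytical step you outline has a genuine gap and differs from what the paper actually proves.

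The paper does \emph{not} show that $\cal J^{\an,\beta}_{|R}(\sigma)$ converges in $\End(\bar C)_{j_\sigma}$ for each $\sigma$ (i.e.\ convergence in $\cal U_R$).  It shows only the weaker statement that the \emph{off-tree} projection $\cal J^{\an}_D(\sigma,\beta)$ tends to zero (Proposition~\ref{prop:sigma-reduction}), equivalently $p(\cal J^{\an,\beta}_{|R})\to 0$ in $\cal D_R^\sharp$.  The passage from this to convergence in $\cal U_R^\sharp$ is then purely algebraic: since $z_\beta:=\cal J^{\an,\beta}_{|R}-\cal J_{|R}\in \cal H_R(0)$ for all $\beta$, and $p|_{\cal H_R(0)}$ is injective (Lemma~\ref{lem:one-to-one}), $p(z_\beta)\to 0$ forces $z_\beta\to 0$.

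Your proposed route instead requires convergence of the full integral $\int_\sigma \cal J^{\an,\beta}_{j_\sigma}$, and the dominated convergence argument you sketch does not deliver this.  The difficulty is that smallness of $\sigma$ only places $\sigma(\Delta^{j_\sigma})$ inside a single stratum $\Sigma_{k_\sigma}$, i.e.\ only $W_{k_\sigma}$ is guaranteed one-to-one there.  For the other levels $m\neq k_\sigma$ the greedy tree varies (or is undefined) across $\sigma$, so your claimed uniform localization $\partial_{\beta W}^\dagger(b)\to\partial_{T_b}^\dagger$ with $C^1$ control is simply false at those levels: near a level-$m$ tie locus one has $|d\alpha_m^\beta|\sim\beta$ with no exponential damping, so the integrand $\alpha_{j}\wedge d\alpha_{j-1}\wedge\cdots\wedge d\alpha_0$ is of order $\beta^{j}$ and is not dominated.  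The exterior derivative cannot be exchanged with the limit at those levels.  What \emph{is} true — and what the paper exploits via the Kirchhoff decomposition (Proposition~\ref{prop:k-decomp}) — is that at the single level $k_\sigma$ one has the uniform bound $|d\rho_{\omega_{k_\sigma}}|\le C\beta e^{-\beta E}$ on $\sigma$, and this one exponentially small factor beats the remaining $O(\beta^{j-1})$ factors.  That is exactly enough to kill the off-tree component $\cal J^{\an}_D$ and nothing more; the on-tree component need not converge at all, and the paper never claims it does.
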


\begin{rem}
The limit  $\beta\to \infty$ is known as the {\it low temperature limit}.
Theorem \ref{thm:quantization} says that the low temperature limit of the restricted analytical hypercurrent cochain coincides with its topological counterpart.
\end{rem}

\subsection{The Kirchhoff decomposition}
In this subsection, we extend the generalized Kirchhoff and Boltzmann
distributions of \cite{CCK15a},\cite{CCK15b} to deduce a 
decomposition of the form $\cal J^\an_\ell$. This decomposition is subsequently used in the proof of the Quantization Theorem.
In what follows we fix a pair
\[
(\gamma,\beta)\, ,
\]
where $\gamma \: \Sigma \to \cal M_{p,q}(X)$ is a protocol
and $\beta\in \Bbb R_+$ is a choice of inverse
temperature $\beta \in \Bbb R_+$. Recall that $\gamma_\beta(x) := 
\beta\gamma(x )$.

Let $T$  be a $d$-tree of $X$, where $d\in [p,q]$ (cf.~Definition \ref{defn:d-tree})
If $d > p$, then
$T$  determines a preferred right inverse
\[
\partial_{T}^+ \: \bar B_{d-p-1} \to \bar C_{d-p}
\]
for the boundary operator $\partial\: \bar C_{d-p} \to \bar B_{d-p-1}$. The right inverse is obtain by inverting the composition 
\[
\bar C_{d-p}(T) @> \subset>> \bar C_{d-p}(X)  @>\partial >> \bar B_{d-p-1}(X)\, ,
\]
the latter which is an isomorphism. 

If $d=p$, then a $p$-co-tree $T$ determines a right inverse
$-i^+_{T}$ to  $-i\:\bar B_0 \to \bar C_0$, where $i$ denotes the inclusion.
In this case, it will be convenient to abuse notation slightly by
setting
\[
\partial_{T}^+ := -i^+_{T}\: \bar C_0 \to \bar B_0\, ,
\]

For a  $d$-tree  $T$ with $d > p$, let 
$
\tau_{T} 
$
be the order of the torsion subgroup of 
$H_{d-1}(T;\Bbb Z)$. When $j=p$
we define $\tau_T$ to be the order of the torsion subgroup
of $C_p(X;\Bbb Z)/B_p(T;\Bbb Z)$. Let $W_{T}\: \Sigma \to \Bbb R$ be defined by $W_{T}(x) = \sum_{b\in  T_d} W_d(x)(b)$. Set
\begin{equation}\label{eqn:rho-T}
\rho_{T} = \tfrac{1}{\Delta}\tau_T^2 e^{-\beta W_{T}}\, ,\quad   \Delta
= \sum_T \tau_T^2 e^{-\beta W_{T}}\, , 
\end{equation}
where the sum is over all $d$-trees.
 
\begin{lem}[cf.~\cite{CCK15a},\cite{CCK15b}] \label{lem:kirchhoff-sum}  
With respect to the modified inner product defined by $\beta W_j$,
the Moore-Penrose pseudoinverse to 
the boundary operator $\partial\: \bar C_j \to \bar B_{j-1}$ 
(respectively to the inclusion $-i\: \bar B_0 \to \bar C_0$ if  $j = 0$)
is given by the expression
\[
\partial_{j}^\dagger := \sum_{T} \rho_{T}\partial_{T}^+ \, ,
\]
where the sum is over all $(j+p)$-trees $T$.
\end{lem}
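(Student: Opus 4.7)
The plan is to verify that the candidate operator $P := \sum_T \rho_T \partial^+_T$ satisfies the two defining properties of the Moore--Penrose pseudoinverse of $\partial$ with respect to the modified inner product $\langle-,-\rangle_{\beta W}$: namely, (a) $\partial \circ P = \mathrm{id}_{\bar B_{j-1}}$, and (b) the image of $P$ lies in the orthogonal complement of $\ker \partial \subset \bar C_j$ in the modified inner product. Together these characterize $\partial^\dagger$ uniquely, so it suffices to establish (a) and (b).

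Property (a) is essentially formal. By the spanning tree condition, for each $(j+p)$-tree $T$ the restricted boundary $\partial\colon \bar C_j(T) \to \bar B_{j-1}$ is an isomorphism, and $\partial^+_T$ is its inverse extended by zero; hence $\partial \circ \partial^+_T = \mathrm{id}$. Since $\Delta$ is defined precisely as $\sum_T \tau_T^2 e^{-\beta W_T}$, one has $\sum_T \rho_T = 1$, and consequently $\partial \circ P = \sum_T \rho_T \,(\partial \circ \partial^+_T) = \mathrm{id}$.

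Property (b) is the substantive claim. For any boundary $b \in \bar B_{j-1}$ and any cycle $z \in \ker \partial \subset \bar C_j$, we must show
\[
\langle P(b), z\rangle_{\beta W} \,=\, \sum_T \rho_T\, \langle \partial^+_T(b), z\rangle_{\beta W} \,=\, 0.
\]
This is where the specific weighting by $\rho_T = \tau_T^2 e^{-\beta W_T}/\Delta$ enters decisively: after expanding the modified inner product in the cellular basis and clearing the common denominator $\Delta$, the numerator matches the alternating-minor expression produced by the higher-dimensional matrix--tree theorem of \cite{CCK15a},\cite{CCK15b}. The torsion factor $\tau_T^2$ is produced by a Cauchy--Binet expansion of the principal minors of the biased combinatorial Laplacian (reflecting the integral homology of $T$), while the exponential factor $e^{-\beta W_T}$ absorbs the modified weights on the $j$-cells in $T$. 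The cited Kirchhoff-type identity then yields the vanishing.

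The main obstacle is (b); the rest is bookkeeping. The case $j=0$ requires the parallel co-tree version: here trees are $0$-cells, $\partial^+_T := -i^+_T$ is projection onto the chosen $0$-cell, and the co-tree Kirchhoff theorem of \cite{CCK15b} delivers the analogous orthogonality, this time between $\bar D_{W0}$ and $\bar B_0$, identifying the Boltzmann-weighted sum with the pseudoinverse of $-i\colon \bar B_0 \to \bar C_0$. Since the structural ingredients (normalized tree partition function, torsion weights, Boltzmann factors) have already been developed in the cited works, the lemma reduces to checking that the present setting matches theirs and transporting the formulas verbatim.
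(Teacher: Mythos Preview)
Your outline is correct and aligns with the paper, which offers no independent argument here but simply cites \cite{CCK15a},\cite{CCK15b} for the Kirchhoff-type identity; your decomposition into the formal right-inverse check (a) and the orthogonality-from-matrix-tree step (b) is precisely how those references establish the result. One small slip: in the $j=0$ case a $p$-co-tree is not in general a single cell (that holds only when $p=0$), but your reduction to the cited co-tree Kirchhoff/Boltzmann theorem is unaffected by this.
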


\begin{rem} 
The function $\rho_{T}\: \Sigma \to \Bbb R_+$
and the operator $\partial_j^\dagger$ depend on the value of
$\beta W_{j+p}$ even though the notation fails to indicate it.
The operator
$\partial_{T}^+$, which depends on
$T$, is independent of $\beta W_{j+p}$.
\end{rem}

\begin{defn}\label{defn:ell-structure} Let $\ell \in [0,q-p]$ be an integer.
 An {\it $\ell$-orchard} for $X$ is an ordered
$(\ell+1)$-tuple
\[
\omega_\bullet = (\omega_0,\omega_1,\dots,\omega_{\ell})\, ,
\]
in which $\omega_j$ is a $(j+p)$-tree of $X$.
\end{defn}

To build a Kirchhoff decomposition for the form $\cal J^\an_\ell$,
we arbitrarily fix, for each $j \in (0,\ell)$, a left inverse 
\[
\zeta_j \: \bar C_j \to \bar B_j
\] 
to the inclusion $\bar B_j \to \bar C_j$; denote these data by $\zeta_\bullet$. For an $\ell$-orchard $\omega_\bullet$, 
let
$
f_\ell(\omega_\bullet)\: \bar C_0 \to \bar C_\ell
$
be the linear transformation defined by
\begin{equation} \label{eqn:f-omega}
f_\ell(\omega_\bullet) = 
\partial_{\omega_\ell}^+ \zeta_{\ell-1} \partial_{\omega_{\ell-1}}^+
 \zeta_{\ell-2} \partial_{\omega_{\ell-2}}^+\cdots 
 \zeta_{1} \partial_{\omega_{1}}^+\partial_{\omega_{0}}^+\, .
 \end{equation}
 Define an $\ell$-form 
 $
 \varrho_\ell(\omega_\bullet) \in \Omega^{\ell}(\Sigma;\Bbb R)
 $
 by 
 \begin{equation} \label{eqn:J-omega}
  \varrho_\ell(\omega_\bullet) := \rho_{\omega_\ell}
  d\rho_{\omega_{\ell-1}}\wedge
  \cdots \wedge d\rho_{\omega_{0}}\, .
\end{equation}
Hence, $f_\ell$  depends on the pair $(\omega_\bullet,\zeta_\bullet)$,
whereas $\varrho_\ell$ depends on the triple
 $(\omega_\bullet, \gamma,\beta)$.
  
  \begin{prop}[Kirchhoff Decomposition] \label{prop:k-decomp} The form $\cal J^\an_\ell$ is given by
  \[
  \sum_{\omega_\bullet} f_\ell(\omega_\bullet) \varrho_\ell(\omega_\bullet)\, ,
  \]
  where the sum is indexed over all $\ell$-orchards of $X$.
  \end{prop}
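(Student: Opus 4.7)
The plan is to prove the decomposition by induction on $\ell \in [0, q-p]$, combining the recursive definition $\cal J^\an_\ell = \partial_W^\dagger \, d\cal J^\an_{\ell-1}$ (Definition~\ref{defn:JL}, together with Steps~(1)--(2) of \S\ref{section:analytical}) with the Kirchhoff sum of Lemma~\ref{lem:kirchhoff-sum}. The base cases $\ell = 0, 1$ I would check directly: substitute the Kirchhoff expression $i_W^\dagger = \sum_{\omega_0} \rho_{\omega_0} i_{\omega_0}^+$ into $\cal J^\an_0 = I - i_W^\dagger$, and similarly expand $\cal J^\an_1 = -\partial_W^\dagger \, d\, i_W^\dagger$, using that each $i_T^+$ and $\partial_T^+$ is independent of the protocol (Remark after Lemma~\ref{lem:kirchhoff-sum}) so that the exterior derivative hits only the coefficients $\rho_T$.

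For the inductive step $\ell \ge 2$, I would assume the formula at level $\ell - 1$ and note that every factor of $f_{\ell-1}(\omega_\bullet')$ is protocol-independent, whence
\[
d\cal J^\an_{\ell-1} \,=\, \sum_{\omega_\bullet'} f_{\ell-1}(\omega_\bullet') \, d\varrho_{\ell-1}(\omega_\bullet')\,.
\]
Apply the Kirchhoff expansion $\partial_W^\dagger = \sum_{\omega_\ell} \rho_{\omega_\ell} \partial_{\omega_\ell}^+$ (indexed over $(\ell+p)$-trees) and regroup the resulting double sum as a sum over $\ell$-orchards $\omega_\bullet = (\omega_0, \ldots, \omega_\ell)$. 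The coefficient identity $\rho_{\omega_\ell}\, d\varrho_{\ell-1}(\omega_\bullet') = \varrho_\ell(\omega_\bullet)$ is immediate from \eqref{eqn:J-omega}, while the operator identity $\partial_{\omega_\ell}^+\zeta_{\ell-1} f_{\ell-1}(\omega_\bullet') = f_\ell(\omega_\bullet)$ is just the telescoping in \eqref{eqn:f-omega}.

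The main technical obstacle, and what forces the auxiliary data $\zeta_\bullet$ into the statement, is reconciling the domain of $\partial_{\omega_\ell}^+$ (namely $\bar B_{\ell-1}$) with the fact that individual summands $f_{\ell-1}(\omega_\bullet') \, d\varrho_{\ell-1}(\omega_\bullet')$ take values in $\hom(\bar C_0, \bar C_{\ell-1})$ and need not land in cycles. The lemma preceding Definition~\ref{defn:JL} guarantees, via the gap condition, that the \emph{total} form $d\cal J^\an_{\ell-1}$ does take values in $\hom(\bar C_0, \bar B_{\ell-1})$. Hence pre-composing with the fixed left inverse $\zeta_{\ell-1}$ does not alter the total,
\[
\partial_W^\dagger \, d\cal J^\an_{\ell-1} \,=\, \partial_W^\dagger \, \zeta_{\ell-1}\, d\cal J^\an_{\ell-1}\,,
\]
which is what legitimates the term-by-term application of $\partial_{\omega_\ell}^+$ on each orchard summand. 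The subtle conceptual point hidden inside this step is that different choices of $\zeta_\bullet$ redistribute weight among the orchard contributions but must produce the same total $\cal J^\an_\ell$; the right-hand side of the proposition is thus only apparently dependent on $\zeta_\bullet$, and the proof implicitly verifies this gauge-independence by identifying the total with the intrinsically defined form $\cal J^\an_\ell$.
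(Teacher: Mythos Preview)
Your approach is correct and essentially the same as the paper's. The paper proceeds non-inductively: it starts from the closed-form product expression $\cal J^\an_\ell = \alpha_\ell\wedge\beta_{\ell-1}\wedge\cdots\wedge\beta_0$ of Corollary~\ref{lem:explicit-analytic}, inserts all the projections $\zeta_j$ simultaneously (using that each factor $d\partial_j^\dagger$ already lands in $\bar B_j$, since $\partial\,d\partial_j^\dagger = d(\partial\partial_j^\dagger)=0$ and the gap condition applies), and then expands every $\partial_j^\dagger$ at once via Lemma~\ref{lem:kirchhoff-sum}. Your inductive organisation inserts one $\zeta_{\ell-1}$ at a time and invokes the (global) fact that $d\cal J^\an_{\ell-1}$ takes values in $\bar B_{\ell-1}$; this is logically equivalent and arguably more transparent, since it tracks exactly the recursive Definition~\ref{defn:JL}. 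Your remark on the $\zeta_\bullet$-independence of the total is the content of the paper's Remark following the proposition.
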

  
   \begin{rem} In particular, Proposition \ref{prop:k-decomp} implies that the sum
   of the terms $f_\ell(\omega_\bullet) \varrho_\ell(\omega_\bullet)$
  is independent of the choice of $\zeta_\bullet$.
  \end{rem}

  \begin{proof}[Proof of Proposition \ref{prop:k-decomp}] Observe  that $\zeta_{j-1} d\partial^\dagger_{j-1}$ coincides with
  $d\partial^\dagger_{j-1}$, since the image of 
  $d\partial^\dagger_{j-1}$ is
  contained in  $\bar B_{j-1}$.    Then
  
  \resizebox{1.05\linewidth}{!}{
  \begin{minipage}{\linewidth}
  \begin{align*}
  \cal J^\an_\ell  
  &= \partial_\ell^\dagger d \partial_{\ell-1}^\dagger
  \wedge \cdots \wedge d \partial_1^\dagger \wedge d \partial_0^\dagger \, , \\
  &= \partial_\ell^\dagger \zeta_{\ell-1} d \partial_{\ell-1}^\dagger
  \wedge \cdots \wedge \zeta_1 d \partial_1^\dagger \wedge d \partial_0^\dagger \, , \\
 &=  \sum_{\omega_\ell} \rho_{\omega_\ell}\partial_{\omega_\ell}^+
\sum_{\omega_{\ell-1}} d\rho_{\omega_{\ell-1}}\zeta_{\ell-1}
 \partial_{\omega_{\ell-1}}^+ \wedge \cdots \wedge \sum_{\omega_{1}} 
 d\rho_{\omega_{1}}\zeta_{1}
 \partial_{\omega_{1}}^+ \wedge \sum_{\omega_0} \rho_{\omega_0}\partial_{\omega_0}^+ \, ,\\
  & =  \sum_{\omega_\bullet} f_\ell(\omega_\bullet) \cal \varrho_\ell(\omega_\bullet)\, ,
 \end{align*}
   \end{minipage}
}
where the first line uses Lemma
\ref{lem:explicit-analytic} and the transition from line two to line three uses Lemma  \ref{lem:kirchhoff-sum}.   \end{proof}

 \subsection{Proof of the Quantization Theorem}
With respect to Definition \ref{defn:lengthy}, set
\[
\cal D^\sharp_R :=\cal U^\sharp_R/\cal T^\sharp_R\, .
\]
Then we have an short exact sequence of vector spaces
\[
0\to \cal T^\sharp_R @> i >> \cal U^\sharp_R @>p>> \cal D^\sharp_R\to 0
\]
in which each term is finite dimensional when $
R = R_\sigma$.

\begin{lem} \label{lem:one-to-one}
For any of the posets of interest, the composition
\[
\cal H_R(0) @>>> \cal U^\sharp_R  @>p>> \cal D^\sharp_R
\]
is a monomorphism of vector spaces. 
\end{lem}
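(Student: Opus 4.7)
The plan is to adapt the argument behind Proposition \ref{prop:exist-unique-thc-reform}, which handles $\xi=\mathrm{id}$, to the case $\xi=0$. Injectivity amounts to showing that any class $[c]\in \cal H_R(0)$ whose image in $\cal D_R^\sharp$ vanishes is itself zero in $\cal U_R^\sharp$. Unpacking, the hypothesis $p[c]=0$ means $c\in\cal Z_R(0)$ admits a representative $t\in\cal T_R$; that is, $t=c-\delta b$ for some $b\in\cal U_R$ of degree $-1$. Since both the cocycle property and triviality on homology are preserved under adding coboundaries, $t$ itself lies in $\cal Z_R(0)\cap\cal T_R$, so it suffices to prove $t\in\cal B_R$.

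Next I would translate $t$ into the functor-category language of \S\ref{sec:hyper-construct} restricted to the closed subposet $R$. Using Lemma \ref{lem:constant} and the fact that $\chi^c_{|R}$ is cofibrant, one has $\colim_R\chi^c\cong R_\ast\otimes\bar C$ and $\colim_R\chi\cong\bar C$, and a cocycle $t\in\cal T_R$ corresponds to a natural transformation $\mathfrak t\:\chi^c_{|R}\to\tau_{|R}$ whose colimit postcomposed with $\colim\mathfrak v$ recovers $t$; coboundaries in $\cal U_R$ correspond to chain homotopies of such natural transformations. Because $\tau$ is positively acyclic and $\mathfrak v_\ast$ is an isomorphism on $H_0$ (Lemma \ref{lem:conditions}), the assumption $t\in\cal Z_R(0)$ forces the induced natural transformation $H_0(\mathfrak t)\:H_0(\chi^c)\to H_0(\tau)$ to vanish.

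Corollary \ref{cor:lift} then concludes the argument in the same spirit as the proof of Proposition \ref{prop:pre-hypercurrent}: a natural transformation $\chi^c_{|R}\to\tau_{|R}$ inducing the zero map on $H_0$ is unique up to contractible choice, and the zero natural transformation visibly has this property, so $\mathfrak t$ is null-homotopic through natural transformations in $\Ch^R$. Transporting this null-homotopy back under the translation of the previous paragraph exhibits $t$ as a coboundary in $\hom(R_\ast,\End(\bar C))$. The main obstacle I foresee is precisely that translation step: namely, accurately matching cocycles in $\cal T_R\subset \hom(R_\ast,\End(\bar C))$, modulo the coboundary relation, with natural transformations $\chi^c_{|R}\to\tau_{|R}$, modulo chain homotopy, keeping careful track of the degree shifts coming from the top generators of $\Delta^{j_\sigma}_\ast$ and the face-compatibility relations, so that Corollary \ref{cor:lift} delivers exactly what is needed.
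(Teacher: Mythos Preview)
Your proposal is correct and follows essentially the same approach as the paper. The paper's own proof is a one-line reference to Proposition~\ref{prop:exist-unique-thc-reform} with $\xi$ replaced by $0$, and you have unpacked precisely that: the statement amounts to $\cal H_R(0)\cap\cal T_R^\sharp=\{0\}$, which is the $\xi=0$ analogue of Proposition~\ref{prop:exist-unique-thc-reform}, and your reduction to Corollary~\ref{cor:lift} via the dictionary between cocycles in $\cal T_R$ and natural transformations $\chi^c_{|R}\to\tau_{|R}$ mirrors how Proposition~\ref{prop:exist-unique-thc-reform} reduces to Corollary~\ref{cor:exist-unique-variant}. The translation step you flag is indeed the only place requiring care, and the direction you actually need---that a null-homotopy of $\mathfrak t$ in $\Ch^R$ yields, after postcomposing with $\mathfrak v$ and passing to colimits, a coboundary witnessing $t\in\cal B_R$---holds without difficulty.
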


\begin{proof} The proof is almost the same as the proof of Proposition \ref{prop:exist-unique-thc-reform} where $\xi$ is
now replaced by $0$. The details will be left to the reader.
\end{proof}

The Quantization Theorem 
will follow from the next two results.

\begin{lem} \label{lem:sigma-reduction} If the Quantization Theorem holds for $R = R_\sigma$, then 
it also holds for all closed subsets of $R_{\Sigma}^{q-p}$.
\end{lem}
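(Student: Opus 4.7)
The strategy is to reduce convergence in $\cal U^\sharp_R$ to the simplex-wise convergences supplied by the hypothesis, using naturality of restriction together with Lemma \ref{lem:one-to-one}. For any smooth small singular simplex $\sigma\in R$ of dimension $j_\sigma \le q-p$, the subposet $R_\sigma$ consisting of $\sigma$ together with all of its faces is closed in $R$, and the inclusion $R_\sigma \subset R$ induces a coordinate projection $\cal U_R \to \cal U_{R_\sigma}$, which is continuous in the box topology and carries $\cal B_R$ into $\cal B_{R_\sigma}$ because coboundaries restrict to coboundaries along closed subposets. Thus there is a continuous linear restriction $\rho_\sigma \colon \cal U^\sharp_R \to \cal U^\sharp_{R_\sigma}$, and by naturality of both integration and the topological hypercurrent construction one has $\rho_\sigma(\cal J^{\an,\beta}_{|R}) = \cal J^{\an,\beta}_{|R_\sigma}$ and $\rho_\sigma(\cal J_{|R}) = \cal J_{|R_\sigma}$. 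The hypothesis therefore supplies the convergence $\rho_\sigma(\cal J^{\an,\beta}_{|R}) \to \rho_\sigma(\cal J_{|R})$ in each $\cal U^\sharp_{R_\sigma}$ as $\beta \to \infty$.

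To promote these per-simplex convergences to convergence in $\cal U^\sharp_R$, I would pass through $\cal D^\sharp_R = \cal U_R/(\cal B_R+\cal T_R)$. The difference $\cal J^{\an,\beta}_{|R} - \cal J_{|R}$ is a cocycle inducing $0$ on $H_0(\bar C)$ and trivial on positive-degree homology, so it defines a class in $\cal H_R(0)$; by Lemma \ref{lem:one-to-one} this class vanishes if and only if its image in $\cal D^\sharp_R$ vanishes. The key structural advantage is that $\cal T_R$ is defined coordinate-wise in the product $\prod_\sigma \End(\bar C)_{j_\sigma}$, so $\cal U_R/\cal T_R$ inherits the box topology as the product of the finite-dimensional quotients $\End(\bar C)_{j_\sigma}/\hom(\bar C,\tau(\sigma))_{j_\sigma}$. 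Since $\cal J_{|R}$ takes values in tree subcomplexes by construction, its image in $\cal D^\sharp_R$ is already zero, and it suffices to show that the image of $\cal J^{\an,\beta}_{|R}$ converges to $0$ there. This last statement is detected coordinate-wise by the $\rho_\sigma$'s and so reduces to the hypothesis applied to each $R_\sigma$.

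The main obstacle is that the box topology on $\cal U_R$ is strictly finer than the product topology, so one must verify that the simplex-wise convergences can be assembled \emph{uniformly} in $\sigma \in R$. Concretely, one must exhibit, for every basic open neighborhood $\prod_\sigma V_\sigma$ of a representative of $\cal J_{|R}$, a threshold $\beta_0$ and a single global coboundary $\delta\eta^\beta \in \cal B_R$ with $\cal J^{\an,\beta}_{|R} + \delta\eta^\beta$ lying in the prescribed neighborhood simultaneously at every coordinate. I would address this uniformity using the Kirchhoff decomposition of Proposition \ref{prop:k-decomp}: the form $\cal J^{\an,\beta}_\ell$ is a global smooth form on $\Sigma$ expressed as a finite sum over orchards weighted by Boltzmann factors $\rho_{\omega_\bullet}$, whose non-dominant contributions decay exponentially in $\beta$ with rates bounded below on compact subsets of $\Sigma$. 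Integration over small smooth singular simplices of dimension $\le q-p$ preserves this uniform exponential decay, so the local chain homotopies witnessing the single-simplex convergence glue into a global coboundary on $R$ controlling all coordinates at once.
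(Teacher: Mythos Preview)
The paper dispatches this lemma in a single sentence: ``By definition, convergence in $\cal U^\sharp_R$ is defined objectwise in $\sigma$.'' In other words, the paper treats the reduction as essentially tautological, reading convergence in $\cal U^\sharp_R$ as nothing more than convergence at each coordinate $\sigma$.

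Your argument is far more elaborate, and you have put your finger on a real tension: the paper topologizes $\cal U_R$ as a subspace of a product equipped with the \emph{box} topology, which is strictly finer than the product topology. Under that topology, coordinate-wise convergence does not automatically yield global convergence, so the paper's one-line justification is at best elliptic. Your route---restricting along $\rho_\sigma$, passing through $\cal D^\sharp_R$ via Lemma~\ref{lem:one-to-one}, and then seeking uniform exponential decay from the Kirchhoff decomposition of Proposition~\ref{prop:k-decomp}---is a reasonable attempt to fill this gap, and in spirit it anticipates the estimates carried out later in the proof of Proposition~\ref{prop:sigma-reduction}.

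That said, your last paragraph is only a sketch (``I would address this uniformity using\ldots''). To make it work you would need a uniform positive lower bound on the energy gaps $E(\sigma)$ of \eqref{eqn:E} as $\sigma$ ranges over the possibly infinite set $R$, together with uniform control on the Jacobian factors $\int_{\Delta^{j_\sigma}} g\, d\mu$. Neither is immediate: even when $\Sigma$ is compact, the set of small smooth singular simplices is not, and these quantities can degenerate as $\sigma$ approaches the boundary of a stratum $\Sigma_k$ or as its derivative blows up. So while your diagnosis of the box-topology issue is sound and your proposed remedy is plausible, the uniform step remains open as written.
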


\begin{proof} By definition, convergence in $\cal U^\sharp_R$ is defined objectwise in $\sigma$.
\end{proof}

\begin{prop} \label{prop:sigma-reduction} Let $R = R_\sigma$. Then
\[
\lim_{\beta \to \infty} p({\cal J}^{\an,\beta}_{|R}) = 0\, .
\]
\end{prop}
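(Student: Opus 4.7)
The plan is to prove $p(\cal J^{\an,\beta}_{|R_\sigma})\to 0$ by a pointwise analysis on each simplex $\tau \in R_\sigma$, using the Kirchhoff decomposition of Proposition \ref{prop:k-decomp} and exploiting the concentration of the weights $\rho_T$ at the greedy tree in the low-temperature limit. Since $R_\sigma$ is finite (consisting of $\sigma$ together with its faces), it suffices to exhibit, for each $\tau \in R_\sigma$, a decomposition
\[
\cal J^{\an,\beta}(\tau) \,=\, t^\beta(\tau) \,+\, o(1)
\]
with $t^\beta(\tau) \in \hom(\bar C_0, \bar C_{j_\tau}(T_\tau)) \subset \hom(\bar C, \tau(\tau))$; the family $\{t^\beta(\tau)\}_\tau$ then assembles into a cochain in $\cal T_{R_\sigma}$, and the desired convergence in $\cal D^\sharp_{R_\sigma}$ follows.

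For each such $\tau$, the image $\tau(\Delta^{j_\tau}) \subset \Sigma_{k_\tau}$ is compact and $W_{k_\tau}$ is injective there, so the greedy algorithm at level $k_\tau$ selects a single $k_\tau$-tree $T_\tau$, constant on this image. Starting from \eqref{eqn:rho-T} one derives
\[
d\rho_T \,=\, -\beta\, \rho_T \bigl( dW_T \,-\, \textstyle\sum_{T'} \rho_{T'}\, dW_{T'} \bigr),
\]
yielding the uniform asymptotics $\rho_T - \delta_{T,T_\tau} = O(e^{-c\beta})$ and $d\rho_T = O(\beta e^{-c\beta})$ on $\tau(\Delta^{j_\tau})$ for any $k_\tau$-tree $T$, where $c = \min_{T \ne T_\tau} \min_b (W_T(b) - W_{T_\tau}(b)) > 0$ by continuity and compactness. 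At sub-injective levels $j < k_\tau$, no such concentration holds, but the same formula still gives the uniform polynomial bound $|d\rho_T| \le C\beta$ on the compact image.

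The argument splits into three cases according to the position of $j_\tau$ relative to $k_\tau - p$. Because $T_\tau$ is a $k_\tau$-tree, $\bar C_j(T_\tau) = \bar C_j(X)$ for $j+p < k_\tau$, $\bar C_{k_\tau - p}(T_\tau) = C_{k_\tau}(T_\tau)$, and $\bar C_j(T_\tau) = 0$ for $j+p > k_\tau$. In Case 1 ($j_\tau + p < k_\tau$) the tree-factoring constraint is vacuous, so one may take $t^\beta(\tau) = \cal J^{\an,\beta}(\tau)$ with zero error. In Case 2 ($j_\tau + p = k_\tau$) the outermost Kirchhoff factor is $\rho_{\omega_{j_\tau}}$ with $\omega_{j_\tau}$ a $k_\tau$-tree; we define $t^\beta(\tau)$ by restricting the Kirchhoff sum to orchards with $\omega_{j_\tau} = T_\tau$, so that the operator $f_{j_\tau}(\omega_\bullet)$ factors through $\partial^+_{T_\tau}$ and has image in $\bar C_{j_\tau}(T_\tau)$, while the complementary orchards decay as $\rho_{\omega_{j_\tau}} = O(e^{-c\beta})$ multiplied by the at-most polynomial blow-up of the lower-level $d\rho$'s. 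In Case 3 ($j_\tau + p > k_\tau$) every orchard carries a differentiated factor $d\rho_{\omega_{k_\tau - p}}$ at the injective level $k_\tau$, which vanishes exponentially, so $\cal J^{\an,\beta}(\tau) \to 0$ and we take $t^\beta(\tau) = 0$.

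The main obstacle is Case 2, where one must show that the exponentially small weight $\rho_{\omega_{j_\tau}}$ for $\omega_{j_\tau} \neq T_\tau$ dominates the potentially divergent product of $j_\tau$ factors of $d\rho$ at sub-injective levels, each of size $O(\beta)$. The polynomial bound $|d\rho_T| \le C\beta$ derived above is exactly what makes the residual term $O(\beta^{j_\tau} e^{-c\beta}) = o(1)$, and an analogous bookkeeping gives the exponential vanishing in Case 3. Once these estimates are in place, the pointwise decomposition $\cal J^{\an,\beta}(\tau) = t^\beta(\tau) + o(1)$ holds for every $\tau \in R_\sigma$, the family $\{t^\beta(\tau)\}_\tau$ determines an element of $\cal T_{R_\sigma}$, and Proposition \ref{prop:sigma-reduction} follows.
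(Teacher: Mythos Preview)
Your proof is correct and follows essentially the same approach as the paper: both use the Kirchhoff decomposition (Proposition~\ref{prop:k-decomp}), split into three cases according to the sign of $j_\tau + p - k_\tau$, and balance the exponential concentration of $\rho_T$ at the greedy tree $T_\tau$ against the at-most polynomial growth of the remaining factors $d\rho_T$. Your handling of the case $j_\tau + p = k_\tau$ (restricting the orchard sum to $\omega_{j_\tau} = T_\tau$) is somewhat more explicit than the paper's, which leaves that case to the reader, and your indexing convention comparing $j_\tau + p$ with $k_\tau$ resolves an apparent ambiguity in the paper's comparison of $j$ with $k$.
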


 \begin{proof}[Proof of Theorem \ref{thm:quantization} assuming Proposition \ref{prop:sigma-reduction}]
 By Lemma \ref{lem:sigma-reduction}, it is enough to consider the case $R= R_\sigma$. Consider the difference
 \[
 z_\beta := {\cal J}^{\an,\beta}_{|R} - \cal J_{|R}
 \]
 Then $z_\beta \in \cal H_R(0)$ for all $\beta$. We need to show that
 $z_\beta$ tends to zero in $\cal U^\sharp_R$.
 
 Observe that $p(z_\beta) = p({\cal J}^{\an,\beta}_{|R})$.
Using Proposition \ref{prop:sigma-reduction}, we infer that 
 $z_\beta$ tends to the subspace $\cal T^\sharp_R$ as $\beta$ tends to $\infty$. Hence, $z_\beta$ tends to the intersection $\cal H_R(0)\cap \cal T^\sharp_R$. But Lemma
 \ref{lem:one-to-one} says this intersection  is just the zero vector.
It follows that $z_\beta$ tends to 0.
 \end{proof}
 
\begin{proof}[Proof of Proposition \ref{prop:sigma-reduction}]
We begin by reviewing the relevant definitions and introducing some notation.
In what follows 
\begin{itemize}
\item $\gamma\: \Sigma \to \cal M_{p,q}$ is a good protocol;
\item $\beta > 0$ is a real number;
\item For $t \in \Sigma$, and $p\le n\le q$, the function
$W_n(t)\: X_\ell\to \Bbb R$ is the $n$-th component of $\gamma(t)$.
\item  $\sigma\: \Delta^j \to \Sigma$ is a small smooth singular simplex; 
\item  $k := k(\sigma)$  is
the smallest integer such that $\sigma(\Delta^j) \subset \Sigma_k$, where
$\Sigma_k$ is the set of points $t\in \Sigma$ in which $W_k(t)\: X_k\to \Bbb R$ is one-to-one; 
\item  $T_\sigma$ is the preferred $k$-tree associated with $\sigma$; 
\item for an $n$-tree $T$, $p\le n \le q$, we define 
\[
W_T = \sum_{e\in T_n} W_e\, .
\]
\end{itemize} 
For this  proof only,  it will also be convenient to use the notation
\[
\cal J^\an(\sigma,\beta) := {\cal J}^{\an,\beta}_{|R}\, .
\]
Then we have a canonical decomposition
\begin{equation} \label{eqn:TD-decomp}
\cal J^\an(\sigma,\beta) \, =\, \cal J_T^\an(\sigma,\beta) + \cal J_D^\an(\sigma,\beta) \, ,
\end{equation}
uniquely defined by the requirement that 
\[
\cal J_T^\an(\sigma,\beta) \in \hom(\bar C_0(X),\bar C_j(T_\sigma)) \subset \hom(\bar C_0,\bar C_j)
\]
as well as the requirement that the projection (in the basis defined by the cells of $X_{p+j}$)) 
of $\cal J_D^\an(\sigma,\beta)$ onto the subspace
$\hom(\bar C_0,\bar C_j(T_\sigma))$
 is trivial. It will suffice to show that
\[
\lim_{\beta\to 0} \cal J_D^\an(\sigma,\beta) = 0\, .
\]
The proof of the latter reduces to two cases.
\medskip

\noindent {\it Case 1: $j < k$.} In this case, by the defining properties of trees,  
$\cal J^\an(\sigma,\beta)$ lies in the subspace $\hom(\bar C_0,\bar C_j(T_\sigma))$ which immediately implies that
$\cal J_D^\an(\sigma,\beta) = 0$.
\medskip

\noindent {\it Case 2: $j > k$.} Let $\omega_\bullet = (\omega_0,\dots,\omega_\ell)$ be an $\ell$-orchard (cf.~Defn.~\ref{defn:ell-structure}). Then
 one has, analogous to \eqref{eqn:TD-decomp}, the expression
\begin{equation} \label{eqn:TD-decomp-omega}
\varrho_\ell(\omega_\bullet,\beta) = \varrho^T_\ell(\omega_\bullet,\beta) + \varrho^D_\ell(\omega_\bullet,\beta)\, ,
\end{equation}
where $\varrho_\ell(\omega_\bullet,\beta)$ is defined in \eqref{eqn:J-omega} (here we are emphasizing its dependence
on the parameter $\beta$).
It will be enough to establish the following.
\smallskip

\noindent {\it Claim:} There are positive constants $C = C(\sigma), E = E(\sigma)$ such that
\[
|\varrho^D_\ell(\omega_\bullet,\beta)|  < C\beta^j e^{-\beta E}\, ,
\]
where the norm on the left is defined using the basis defined by the cells of $X$.  
\smallskip

To establish the claim, we recall the definition \eqref{eqn:J-omega}
\[
  \varrho_\ell(\omega_\bullet,\beta) := \rho_{\omega_\ell}(\beta)
  d\rho_{\omega_{\ell-1}}(\beta)\wedge
  \cdots \wedge d\rho_{\omega_{0}}(\beta)\, ,
\]
where the functions $\rho_{\omega_j}\: \Sigma \to \Bbb R$ are defined 
in \eqref{eqn:rho-T}. By direct computation, we have
\[
d\rho_{\omega_\ell}(\beta) = \beta\sum_{\alpha} \eta_{\omega_\ell}(\alpha,\beta)dW_\alpha\, ,
\]
with $\alpha$  ranging over all $(p+\ell)$-trees and
the functions $\eta_{\omega_j}(\alpha,\beta)$ satisfy the identity
\[
\eta_{\omega_\ell}(\alpha,\beta) = 
\begin{cases}  \rho_{\omega_\ell}(\beta)\rho_{\alpha}(\beta)\, , & \alpha \ne \omega_\ell\, ;\\
-\rho_{\omega_\ell}(\beta)(1-\rho_{\omega_\ell}(\beta))\, , & \alpha = \omega_\ell\, .
\end{cases}
\]
In particular, using \eqref{eqn:J-omega} we infer that
\begin{equation} \label{eqn:eta}
|\eta_{\omega_\ell}(\alpha,\beta)|\le 1
\end{equation}
for all $(p+\ell)$-trees $\alpha$.

Set
\begin{equation} \label{eqn:E}
E := \inf_{\alpha\ne T_\sigma,t\in \Delta^j} (W_\alpha(\sigma(t)) - W_{T_\sigma}(\sigma(t)))\, .
\end{equation}
Then $E >0$ by the defining property of $T_\sigma$. 

In the case when $\ell = k = k(\sigma)$, it is straightforward to check that one has a tighter bound
\[
|\eta_{\omega_k}(\alpha,\beta)| < e^{-\beta E}\, ,
\]
where in this case $\alpha$ ranges of $k$-trees distinct from $T_\sigma$. 

 Let $d\mu$ be the usual volume form for $\Delta^j$, normalized so that
 $\Delta^j$ has unit volume. Let $g\: \Delta^j\to \Bbb R$ be the unique function such that
\[
g d\mu = \sigma^\ast dW_{\omega_{j-1}} \wedge \cdots \wedge dW_{\omega_0}\, .
\]
Combining the bounds \eqref{eqn:eta} and \eqref{eqn:E} leads to the inequality
\[
|\varrho^D_\ell(\omega_\bullet,\beta)| \le A\beta^j e^{-\beta E}\int_{\Delta^j}g d\mu \, ,
\]
where $A$ is the number of $(p+j)$-trees. Setting $C = A\int_{\Delta^j}g d\mu $ establishes the claim.
\medskip

\noindent {\it Case 2: $j = k$.} This case is sufficiently similar to Case (2) and is left to the reader to verify.
\end{proof}

\section{Space level hypercurrents}\label{sec:space-level}

The goal of this section is to show that, in certain cases, the hypercurrent map
lifts to a space level construction.

\subsection{The graph case} In the graph case, a hypercurrent map
amounts to the current map in the sense of \cite{CKS13} and 
can be defined over the ring of integers. We outline the construction
using model category language.

Equip the category of topological spaces $\Top$ with 
Quillen model structure so that the weak equivalences are the
 weak homotopy equivalences, the fibrations are the Serre fibrations and the
 (Serre) cofibrations are defined by the left lifting property with respect to the acyclic
 fibrations. One can characterize the cofibrations as the (retracts of)
relative cell complexes. 

For an indexing poset $I$ whose objects have finite degree, the functor category $\Top^I$ can then be equipped with the projective model
structure, in which the weak equivalences and a fibrations are defined objectwise
and the cofibrations satisfy the relative latching condition. In this model structure
every object is fibrant.

Let $X$ be a finite connected topological graph (i.e., a CW complex of dimension one) 
and let $\Sigma$ be a good protocol for $X$. Let
$F\in \Top^{I_{\Sigma}}$ be the constant functor with value $X$
and let $\ast \in \Top^{I_{\Sigma}}$ be the constant functor with value the one-point space. 
Let $T \in \Top^{I_{\Sigma}}$ be the tree functor.

The functor $F^c\:I_{\Sigma} \to \Top$ given by
$F(\sigma) = X\times \Delta^j$ for $\sigma\: \Delta^j \to \Sigma$ is a cofibrant approximation to $F$. In particular, the lifting problem
\begin{equation} \label{eqn:lift-diagram-spaces-1}
\xymatrix{
& T \ar[d]^{\sim}\\
F^c \ar[r] \ar@{..>}[ur]
& \ast\, ,
}
\end{equation}
admits a solution (which is also unique up to contractible choice). Consequently, we have
a space level pre-hypercurrent map
\[
\frak j\: F^c \to T\, .
\]
Consider the composition
\begin{equation} \label{eqn:map-space-level-1}
F^c @> \frak j >> T @> \subset >> F\, .
\end{equation}
Taking colimits, we obtain a weak map of spaces
\begin{equation}
\frak J\: \Sigma \times X \to  X\, .
\end{equation}
defining a space level version hypercurrent map. A geometric variant of this construction
appeared in \cite[defn.~5.8]{CKS13}.

\subsection{The case $p=0,q>1$} 
In this case, $X$ is a finite connected CW complex with gap $[0,q]$ 
for $q >0$. Without loss in generality we can assume $X$ has dimension $q$.

Equip $\Top$ 
with the localization model category structure 
defined by the rationalization functor $X\mapsto X_{\Bbb Q}$ 
\cite[th.~4.7]{Barwick}
(cf.~\cite[ch.~2]{Hirschhorn} in the case of based spaces). 
If $f\: X \to Y$ is a map of nilpotent spaces, then
$f$ is a weak equivalence with respect to this model structure if and only if $f$ induces an isomorphism
on homology with rational coefficients. A map $X\to Y$ is a cofibration if and only if it is a Serre cofibration.
A fibration is a map that satisfies the right lifting property with respect to the maps
which are both Serre cofibrations and weak equivalences. An object $Y$ is fibrant if and only if the rationalization map $Y \to Y_{\Bbb Q}$ is a weak homotopy equivalence, i.e.,
$Y$ is a rational space.

The construction of a space level hypercurrent map parallels the $q = 1$ case, with one difference: the tree functor is no longer fibrant. We should therefore replace it with a fibrant approximation $T^f$ given by its rationalization $T^f(s) = T(s)_{\Bbb Q}$.  In what follows $F$ is the constant functor with value $X$.
As in the $q=1$ case, we obtain a space level pre-hypercurrent map
\[
\frak j\: F^c \to T^f\, .
\] 
The space level hypercurrent map is associated with the composition
\begin{equation} \label{eqn:map-space-level-2}
F^c @> \frak j >> T^f @> \subset >> F^f\, ,
\end{equation}
where $F^f$ is the constant functor with value $X_{\Bbb Q}$.
Applying the homotopy colimit construction to the each functor of 
\eqref{eqn:map-space-level-2} and composing, we obtain a map
\[
\Sigma \times X \to \Sigma \times X_{\Bbb Q}\, .
\]
Composing the latter with the second factor projection $\Sigma \times X_{\Bbb Q} \to X_{\Bbb Q}$ gives a space level hypercurrent map
\[
\frak J\: \Sigma \times X \to  X_{\Bbb Q}\, .
\]

\subsection{The case $p > 0$} In this case, there is generally an obstruction to defining
a space level pre-hypercurrent map. However, as we will see, we can perform the construction certain cases.

Let $X$ be a finite connected CW complex $X$ having gap $[p,q]$ with $p > 1$. Without loss
in generality, we can assume that $X$ has been $(p,q)$-adjusted, i.e., $X = X^q_p$. In particular $X$ is a based space. In this
case we use in the localization model category structure on $\Top_\ast$ defined by
rationalization. 

Set $A: = H_p(X;\Bbb Q)$ and let $M := M(A,p)$ be the Moore space 
having reduced homology $A$ in degree $p$ and trivial otherwise. 

\begin{lem} If either 
\begin{itemize}
\item $p <2q$, or 
\item $p,q$ are both odd,
\end{itemize}
there is 
a based map $X\to M$ which induces an identity map in rational homology in degree $p$.
\end{lem}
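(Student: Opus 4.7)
Since $X$ is $(p,q)$-adjusted, it is a $(p-1)$-connected based CW complex whose cells sit only in dimensions between $p$ and $q$. The plan is to build $f\:X\to M$ by skeletal induction. I take $M = M(A,p)$ to be a rational model of the Moore space, so that every homotopy group $\pi_n(M)$ is automatically a $\Bbb Q$-vector space and no integral divisibility issues intervene. The $p$-skeleton $X^{(p)}$ is a wedge of $p$-spheres, and using Hurewicz together with the identification $\pi_p(M)\otimes \Bbb Q = A$ I can realize the natural surjection $H_p(X^{(p)};\Bbb Q)\twoheadrightarrow H_p(X;\Bbb Q)=A$ by an explicit map $f_p\:X^{(p)}\to M$.

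Next I attempt to extend $f_p$ cell-by-cell across the $(p+1)$-, $\dots$, $q$-dimensional cells of $X$. The primary obstruction to passing from the $n$-skeleton to the $(n+1)$-skeleton is a cellular cohomology class in $H^{n+1}(X;\pi_n(M))$, so to build $f$ on all of $X$ it suffices that these classes vanish for $p\le n\le q-1$. Since the coefficient groups are rational vector spaces, the verification reduces to a calculation of $\pi_n(M)\otimes\Bbb Q$ and of the relevant rational cohomology of $X$ in the range determined by the gap.

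The key input is the classical computation of the rational homotopy of the Moore space $M(A,p)$: when $p$ is odd, $M\simeq K(A,p)$ rationally, so $\pi_n(M)\otimes\Bbb Q = 0$ for every $n\ne p$ and every obstruction vanishes automatically---this disposes of the case that $p$ and $q$ are both odd (indeed, of the broader case that merely $p$ is odd). When $p$ is even, the next nonvanishing rational homotopy group of $M$ appears in degree $2p-1$, coming from the Whitehead square, and the numerical relationship between $p$ and $q$ asserted in the first hypothesis is precisely what is needed to keep this troublesome degree outside the range $[p,q-1]$ in which obstructions live.

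The main obstacle will be a clean verification, in the $p$-even regime, that the stated numerical hypothesis really does force the obstruction groups to vanish; the odd-$p$ case is automatic from the Eilenberg--MacLane reduction above. Once this obstruction vanishing is in hand, the inductive extension produces an honest continuous $f\:X\to M$, and a final rescaling of $f$ by a nonzero rational scalar (which is legal because $M$ is a rational space) normalizes the induced map on $H_p(-;\Bbb Q)$ to be the identity rather than an arbitrary nonzero multiple of it.
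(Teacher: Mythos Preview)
Your approach is essentially the same as the paper's: both build the map on $X^{(p)}$ realizing the projection to $A$ and then run cellular obstruction theory, with the obstructions killed by the vanishing of the rational homotopy of $M(A,p)$ in the relevant range. The paper's version is terser---it invokes the gap condition so that only the single obstruction in $H^{q}(X/X^{(p)};\pi_{q-1}(M))$ survives, and then observes $\pi_{q-1}(M)\otimes\Bbb Q=0$---whereas you check all degrees $n\in[p,q-1]$ directly via the rational homotopy type of $M$; both routes are valid, and your observation that the odd-$p$ case needs no hypothesis on $q$ is correct (the paper's ``$q$ odd'' is there only to make the specific claim $\pi_{q-1}(M)=0$ literally true by excluding $q=p+1$).

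Two small points: first, your argument for the odd case tacitly assumes the obstruction at $n=p$ vanishes, which is not a consequence of $\pi_n(M)=0$ for $n\neq p$ but rather of your choice of $f_p$ (the attaching maps of $(p{+}1)$-cells land in $B_p$, hence die under the projection to $A$)---worth saying explicitly. Second, the ``final rescaling by a nonzero rational scalar'' is unnecessary (your $f_p$ already induces the identity on $H_p$, and extending over higher cells does not change that) and would not in general suffice when $\dim_{\Bbb Q}A>1$. Finally, note that the inequality actually established by both arguments in the even-$p$ case is $q<2p$ (so that $2p-1>q-1$); the printed ``$p<2q$'' appears to be a typo.
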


\begin{proof} There is already a preferred map $X^{(p)} \to M$ which induces a projection
on rational homology in degree $p$. The obstructions to extending this map to $X$ lie
in the cohomology group $H^{q}(X/X^{(p)};\pi_{q-1}(M))$. If either of the conditions hold, then $\pi_{q-1}(M)$ is trivial.
\end{proof}
 
In what follows, we fix such a map $X\to M$. To construct a space level pre-hypercurrent map, we let $F$ be the constant functor with value $X$ and we let
$H$ be the constant functor with value $M$. Using the model category factorization axioms, the natural transformations $T\to F \to H$ fit into a commutative diagram
\[
\xymatrix{
T \ar[d]\ar[r]^{{}_\sim} & T'\ar[d]  \ar[r] &H \ar@{=}[d] \\
F \ar[r]_{{}_\sim}       & F' \ar[r] &    H
}
\]
in which the left horizontal maps are acyclic cofibrations and the right maps
are fibrations. It is automatic that the map $T'\to H$ is a weak equivalence.
It is also automatic that both $T'$ and $F'$ are fibrant since $H$ is.

In particular, the lifting problem
\begin{equation} \label{eqn:lift-diagram-spaces-2}
\xymatrix{
& T' \ar[d]^{{}_\sim}\\
F^c \ar[r] \ar@{..>}[ur]
& H\, ,
}
\end{equation}
admits a solution $F^c \to T'$, where $F^c$ is the functor 
which assigns to a small simplex $\Delta^j \to \Sigma$ the space $\Delta^j\times X$.
Taking homotopy colimits of the composition
\[
F^c  \to T' \to F'
\]
we obtain a map
\[
\Sigma \times X \to \Sigma \times X_{\Bbb Q}\, .
\]  Composing with the 
second factor projection, we obtain a space level hypercurrent map
\[
\frak J\: \Sigma \times X \to X_{\Bbb Q}\, .
\]


\end{document}